\DeclareOldFontCommand{\rm}{\normalfont\rmfamily}{\mathrm}
\DeclareOldFontCommand{\sf}{\normalfont\sffamily}{\mathsf}
\DeclareOldFontCommand{\bf}{\normalfont\bfseries}{\mathbf}
\DeclareOldFontCommand{\it}{\normalfont\itshape}{\mathit}
\newtheorem{thm}{Theorem}[section]
\newtheorem*{thm*}{Theorem}
\newtheorem{lemma}[thm]{Lemma}
\newtheorem*{lemma*}{Lemma}
\newtheorem{prop}[thm]{Proposition}
\newtheorem*{prop*}{Proposition}
\newtheorem{qu}[thm]{Question}
\newtheorem*{conj*}{Conjecture}
\newtheorem{cor}[thm]{Corollary}
\theoremstyle{definition}
\newtheorem{ex}[thm]{Example}
\newtheorem*{ex*}{Example}
\newtheorem{defn}[thm]{Definition}
\newtheorem*{defn*}{Definition}
\newtheorem{rem}[thm]{Remark}
\newtheorem*{rem*}{Remark}
\numberwithin{equation}{section}
\renewcommand{\le}{\leqslant}
\renewcommand{\ge}{\geqslant}
\def\emph{}
\DeclareTextFontCommand{\bfemph}{\bf}
\DeclareTextFontCommand{\itemph}{\it}
\def\emph{\bfemph}
\newcommand{\stacks}[1]{{\cite[\href{http://stacks.math.columbia.edu/tag/#1}{Tag #1}]{stacks-project}}}
\def\blankfootnote{\xdef\@thefnmark{}\@footnotetext}
\newcommand*{\textlabel}[2]{%
  \edef\@currentlabel{#1}
  \phantomsection
  #1\label{#2}
}
\newcommand{\RFL}{\textsf{\textup{(RFL)}}}
\newcommand{\STB}{\textsf{\textup{(STB)}}}
\newcommand{\onto}{\twoheadrightarrow}
\newcommand{\incl}{\hookrightarrow}
\newcommand{\xonto}{\xtwoheadrightarrow}
\newcommand{\xto}{\xrightarrow}
\newcommand{\cc}{\ensuremath{\mathsf{cc}}}
\newcommand{\mul}[1]{*_{{#1}}}
\newcommand{\opp}{{\operatorname{op}}}
\newcommand{\MV}{\ensuremath{\circ}}
\newcommand{\MW}{\ensuremath{\bullet}}
\newcommand{\VW}{\ensuremath{\vee}}
\newcommand{\up}{\ensuremath{\uparrow}}
\newcommand{\dn}{\ensuremath{\downarrow}}
\newcommand{\uuu}{{\ensuremath{\up\up\up}}}
\newcommand{\ddd}{{\ensuremath{\dn\dn\dn}}}
\newcommand{\ddu}{{\ensuremath{\dn\dn\up}}}
\newcommand{\udd}{{\ensuremath{\up\dn\dn}}}
\newcommand{\dud}{{\ensuremath{\dn\up\dn}}}
\newcommand{\sk}[1]{\Lambda({{#1}})}
\newcommand{\RF}{\mathfrak{K}}
\newcommand{\Mod}[1]{\mathrm{M}_{{#1}}}
\newcommand{\Dom}[1]{\mathrm{V}_{{#1}}}
\newcommand{\Cod}[1]{\mathrm{W}_{{#1}}}
\newcommand{\GRP}{\mathsf{Grp}}
\newcommand{\CALG}{\mathsf{CAlg}}
\newcommand{\MREP}{\mathsf{mrep}}
\newcommand{\MOD}{\mathsf{Mod}}
\newcommand{\MODf}{\mathsf{mod}}
\newcommand{\OBJ}{\mathsf{obj}}
\newcommand{\QQ}{\mathbf{Q}}
\newcommand{\GG}{\mathbf{G}}
\newcommand{\ZZ}{\mathbf{Z}}
\newcommand{\CC}{\mathbf{C}}
\newcommand{\RR}{\mathbf{R}}
\newcommand{\fa}{\ensuremath{\mathfrak a}}
\newcommand{\fg}{\ensuremath{\mathfrak g}}
\newcommand{\fz}{\ensuremath{\mathfrak z}}
\newcommand{\Places}{\ensuremath{\mathcal V}}
\newcommand{\Zeta}{\ensuremath{\mathsf{Z}}}
\newcommand{\PID}{\textsf{\textup{PID}}}
\newcommand{\DVR}{\textsf{\textup{DVR}}}
\newcommand{\QF}{\textsf{\textup{QF}}}
\newcommand{\ww}{\ensuremath{\bm w}}
\newcommand{\xx}{\ensuremath{\bm x}}
\newcommand{\zz}{\ensuremath{\bm z}}
\newcommand{\fp}{\mathfrak{p}}
\newcommand{\fo}{\mathfrak{o}}
\newcommand{\fO}{\mathfrak{O}}
\newcommand{\fP}{\mathfrak{P}}
\newcommand{\cA}{\mathcal{A}}
\newcommand{\sG}{\mathsf{G}}
\newcommand{\sH}{\mathsf{H}}
\newcommand{\cC}{\mathcal{C}}
\newcommand{\cX}{\mathcal{X}}
\newcommand{\cY}{\mathcal{Y}}
\DeclareMathOperator{\Irr}{Irr}
\DeclareMathOperator{\concnt}{k}
\DeclareMathOperator{\Sym}{Sym}
\DeclareMathOperator{\sym}{S}
\DeclareMathOperator{\comm}{D}
\newcommand{\Types}{\ensuremath{\mathsf{Tps}}}
\newcommand{\XX}{\bm{X}}
\DeclareMathOperator{\GL}{GL}
\DeclareMathOperator{\Ann}{Ann}
\newcommand{\Gl}{\ensuremath{\mathfrak{gl}}}
\newcommand{\So}{\ensuremath{\mathfrak{so}}}
\newcommand{\eval}[2]{\ensuremath{\varepsilon^{{#1}}_{{#2}}}}
\DeclareMathOperator{\KER}{\mathbcal{Ker}}
\DeclareMathOperator{\COKER}{\mathbcal{Coker}}
\DeclareMathOperator{\Ker}{Ker}
\DeclareMathOperator{\Coker}{Coker}
\DeclareMathOperator{\Cent}{C}
\DeclareMathOperator{\End}{End}
\DeclareMathOperator{\Hom}{Hom}
\DeclareMathOperator{\Mat}{M}
\DeclareMathOperator{\ad}{ad}
\DeclareMathOperator{\dd}{d\!}
\DeclareMathOperator{\kersize}{K}
\DeclareMathOperator{\imgsize}{I}
\DeclareMathOperator{\orbsize}{O}
\newcommand{\normal}{\triangleleft}
\newcommand{\dtimes}{\ensuremath{\,\cdotp}}
\newcommand{\card}[1]{\lvert#1\rvert}
\DeclarePairedDelimiter{\abs}{\lvert}{\rvert}
\DeclarePairedDelimiter{\norm}{\lVert}{\rVert}
\DeclareMathOperator{\rank}{rk}
\DeclareMathOperator{\genrank}{grk}
\DeclareMathOperator{\Real}{Re}
\DeclareMathOperator{\Img}{Im}
\newcommand{\llb}{\ensuremath{[\![ }}
\newcommand{\rrb}{\ensuremath{]\!] }}
\newcommand{\ask}[1]{\operatorname{ask}({#1})}
\newcommand{\askm}[2]{\operatorname{ask}^{{#2}}({#1})}
\newcommand{\Ask}{\text{\textup{ask}}}
\newcommand{\Askm}[1]{\text{\textup{ask}$^{{#1}}$}}
\newcommand{\coll}[2]{{}^{{#2}}\hspace*{-.1em}{#1}}
\title{The average size of the kernel of a matrix and orbits of linear groups,
  II: duality}
\author{Tobias Rossmann}
\affil{\small School of Mathematics, Statistics and Applied Mathematics\\ National
  University of Ireland, Galway}
\date{}
\begin{document}

\maketitle
\thispagestyle{empty}

\vspace*{-4em}
\begin{abstract}
   \small
   Define a module representation to be a linear parameterisation of
   a collection of module homomorphisms over a ring.
   Generalising work of Knuth, we define duality functors indexed by the
   elements of the symmetric group of degree three between categories of
   module representations. 
   We show that these functors have tame effects on average sizes of kernels.
   This provides a general framework for and a generalisation of
   duality phenomena previously observed in work of O'Brien and Voll
   and in the predecessor of the present article.
   We discuss applications to class numbers and conjugacy class zeta functions
   of $p$-groups and unipotent group schemes, respectively.
\end{abstract}

 \blankfootnote{%
   \noindent{\itshape 2010 Mathematics Subject Classification.}
   20D15, 20E45, 05A15, 15B33, 13E05, 11M41

  \noindent {\itshape Keywords.} 
  Average size of a kernel, duality, conjugacy classes, $p$-groups, Lie
  rings, unipotent groups

  \medskip
   {\noindent
     The author gratefully acknowledges the support of the
     \href{https://www.humboldt-foundation.de}{Alexander von Humboldt Foundation}.
   Substantial parts of the research described here were carried out while the
   author was a Feodor Lynen Research Fellow at the University of Auckland.
 }}

\tableofcontents

\section{Introduction}
\label{s:intro}

\paragraph{Class numbers.}
Let $\concnt(G)$ denote the class number (= number of conjugacy classes) of
a finite group $G$.
As is well-known,
\begin{equation}
  \label{eq:kIrr}
  \concnt(G) = \#\Irr(G)
  \tag{$\star$}
\end{equation}
is the number of ordinary irreducible characters of $G$.
No natural bijection between conjugacy classes and irreducible characters of
general finite groups has been found. 
Instead, \eqref{eq:kIrr} seems to be best understood in
terms of dualities; see e.g.\ the discussion~\cite{MO102879}.

In a recent paper~\cite{O'BV15}, O'Brien and Voll applied techniques
previously employed in the study of representation growth of infinite groups
to the study of class numbers of finite $p$-groups;
here, and throughout this article, $p$ denotes a prime.
The main techniques they used were the Lazard correspondence and the Kirillov orbit method.
The former provides an equivalence of categories between $p$-groups and 
Lie rings subject to suitable assumptions.
The latter provides a Lie-theoretic characterisation of irreducible
representations of suitable groups in terms of co-adjoint orbits.
Suppose that $G = \exp(\fg)$ is a finite $p$-group corresponding to a Lie ring
$\fg$ via the Lazard correspondence.
As a refinement of \eqref{eq:kIrr}, O'Brien and Voll obtained two different
expressions for $\concnt(G)$, corresponding to both sides of \eqref{eq:kIrr}; for
groups of exponent $p$, their description involves the rank loci of two types
of ``commutator matrices'' attached to $\fg$.

\paragraph{Average size of kernels.}
Given finite abelian groups $V$ and $W$ and a subgroup $M \subset
\Hom(V,W)$, the \underline average \underline size of the \underline kernel of
the elements of $M$ is the rational number
\begin{equation}
  \label{eq:askM}
  \ask M := \frac 1 {\card M} \sum_{a \in M}\card{\Ker(a)}.
  \tag{$\dagger$}
\end{equation}
These numbers appeared in work of Linial and Weitz~\cite{LW00} and Fulman
and Goldstein~\cite{FG15}.
They were further investigated by the author~\cite{ask}.
Using $p$-adic Lie theory, it turns out that numbers of the form $\ask M$
include numbers of orbits and conjugacy classes of suitable linear groups.
In \cite[\S 8]{ask}, this was applied to the study of orbit-counting and
conjugacy class zeta functions associated with unipotent pro-$p$ groups.
An elementary yet crucial ingredient of \cite{ask} was the following
observation (see \cite[Lem.\ 2.1]{ask}) going back to Linial and Weitz:
\begin{equation}
  \label{eq:askMdual}
  \ask M = \sum_{x\in V} \card{xM}^{-1}.
  \tag{$\ddagger$}
\end{equation}

\paragraph{Knuth duality.}
The two expressions~\eqref{eq:askM}--\eqref{eq:askMdual} for $\ask M$
and those for $\concnt(G)$ in \cite[Thm~A]{O'BV15}
are formally similar and seem to suggest a common underlying notion of duality.
Indeed, we will see in \S\S\ref{s:duality}--\ref{s:S3_effects} that a
suitable generalisation (see 
\S\ref{s:ask}) of the above setting for average 
sizes of kernels explains the dualities of both \cite{O'BV15} and \eqref{eq:askMdual}.
Namely, these dualities reflect the effects of two different
involutions in the symmetric group $\sym_3$ of degree three acting by functors
(which we call ``Knuth  dualities'') on the isomorphism
classes of objects of suitable categories of ``module representations''.
Special cases of this action are ubiquitous in the study of semifields where
they are known under the heading of ``Knuth orbits''; see \cite{Knu65b} and e.g.\ \cite{LP14}. 

While somewhat technical to set up rigorously in the present setting,
Knuth duality as studied here only requires elementary linear
algebra. The innocuous yet crucial abstraction step is the following: rather
than a subgroup $M$ of $\Hom(V,W)$ as above, we consider a possibly
non-injective homomorphism $M \xto{\theta} \Hom(V,W)$;
we will dub such maps ``module representations'' in \S\ref{s:mrep}.
Generalising \eqref{eq:askM},
we let 
\[
\ask\theta := \frac 1{\card M}\sum_{a\in M}\card{\Ker(a\theta)}.
\]

The aforementioned $\sym_3$-action now permutes the three groups $M$, $V$,
and $W$, combined with taking suitable duals.
While essentially equivalent $\sym_3$-actions have been studied in
related settings in the literature, the present article seems to be the first
to systematically explore the effects of this action on average sizes of
kernels.
In particular, our main result here, Theorem~\ref{thm:ask_duality},
constitutes a simultaneous and unified generalisation of both
\eqref{eq:askMdual} and the final assertion of \cite[Thm~A]{O'BV15}
(the aforementioned two expressions for $\concnt(G)$).

\paragraph{Lie algebras as module representations.}
Each Lie algebra $\fg$ over $\ZZ$ (``Lie ring'') gives rise to
a canonical associated module representation, namely its adjoint
representation $\fg \xto{\ad_{\fg}}\Hom(\fg,\fg)$.
Clearly, $\fg$ and $\ad_{\fg}$ determine one another.
Indeed, using the formalism developed in \S\ref{ss:eight}, 
the rule $\fg \mapsto \ad_{\fg}$ embeds the category of Lie algebras
over $\ZZ$ into the ``homotopy category'' of
module representations over $\ZZ$.

Let $p$ be a prime and let $\fg$ be a finite nilpotent Lie algebra
$\fg$ of class $< p$ over $\ZZ/p^n$.
As we will see in \S\ref{s:cc_duality}, by embedding the category of
such Lie algebras into a category of module representations over $\ZZ/p^n$,
the function $\ask{\dtimes\,}$ (defined on the latter category) extends
taking the class number $\concnt(\exp(\fg))$ of the finite $p$-group $\exp(\fg)$
associated with $\fg$.
Crucially, Knuth duals of adjoint representations of Lie algebras
need not be of the same form.
It is therefore reasonable to enlarge the original ambient category of
Lie algebras.

A natural (but possibly intractable) question suggested by this
point of view asks which numbers of the form $\ask\theta$ (where
$\theta$ is a module representation) are, or are related to, class
numbers $\concnt(G)$ of groups.
We contribute two results towards answering this question.
First, we will see in \S\ref{s:cc_ask2} that $\ask \alpha$ is essentially a class number
for each ``alternating'' module representation $\alpha$, a 
class of module representations far more general than adjoint representations of Lie algebras.
Secondly, we will see that for each module representation
$M\xto\theta\Hom(V,W)$ (subject to some finiteness conditions), the number 
\[
\askm \theta 2 := \frac 1{\card M} \sum_{a\in M}\card{\Ker(a\theta)}^2
\]
is, up to a harmless factor, the class number of a group.
We will discuss the relationship between the numbers $\ask\theta$ and $\askm
\theta 2$ and give several examples of the latter.

\subsection*{Acknowledgement}

The author is grateful to Christopher Voll for comments and
discussions and to the anonymous referee for several helpful and insightful
suggestions.

\subsection*{\textit{Notation}}

Throughout, rings are assumed to be associative, commutative, and
unital.
Unless otherwise specified by subscripts, tensor products are taken over their natural
base rings.
Maps (including matrices) usually act on the right.
For an object $A$, we often let $A$ itself refer to the identity morphism $A \xto{\mathrm{id}_A} A$.
The symbol $\fO$ always denotes a discrete valuation ring (\DVR) and $\fP$
denotes its maximal ideal.
We write $\fO_n = \fO/\fP^n$, $\RF = \fO/\fP$, and $q = \card{\RF}$
whenever $\RF$ is finite.
We let $K$ denote the field of fractions of $\fO$.

\section{Module representations}
\label{s:mrep}

Throughout, let $R$ be a ring.

\subsection{Modules}
\label{ss:modules}

We recall standard terminology and set up notation.
Let $\MOD(R)$ be the category of $R$-modules and $R$-homo\-mor\-phisms
and let $\MODf(R)$ be the full subcategory of $\MOD(R)$ consisting of finitely generated modules.
Let $(\dtimes)^* = \Hom(\dtimes,R)$ denote the usual dual.

The \emph{evaluation map} associated with $R$-modules $V$ and $W$ is
\[
V \xto{\eval V W} \Hom\Bigl(\Hom(V,W),\,W\Bigr),\quad
x \mapsto (\phi \mapsto x\phi).
\]
Recall that $V$ is \emph{reflexive} if $V \xto{\eval V R} V^{**}$ is an
isomorphism.
The following is folklore.

\begin{prop}
  \label{prop:fg_projective}
  Let $P$ and $P'$ be finitely generated projective
  $R$-modules.
  Then:
  \begin{enumerate}
  \item \label{prop:projective_reflexive1}
    $P$ is reflexive.
  \item \label{prop:projective_reflexive2}
    $P^*$ is finitely generated and projective.
  \item \label{prop:projective_reflexive3}
    The natural maps $P^*\otimes P'
    \to \Hom(P,P')$  and $P^* \otimes (P')^* \to (P\otimes P')^*$ are both
    isomorphisms.
\end{enumerate}
\end{prop}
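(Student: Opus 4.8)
The plan is to reduce all three assertions to the case of a free module of finite rank, exploiting the standard characterisation of finitely generated projective modules as direct summands of modules of the form $R^n$. So I would fix an $R$-module $Q$ with $R^n = P \oplus Q$ and an $R$-module $Q'$ with $R^m = P' \oplus Q'$, for suitable $n, m \ge 0$; these will stay fixed throughout.

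The free case is settled by inspection of standard bases. For $F = R^n$, the map $\eval F R\colon F \to F^{**}$ sends the standard basis of $F$ to the bidual basis of $F^{**}$, hence is an isomorphism; $F^*$ is free of rank $n$; and the two natural maps $F^* \otimes R^m \to \Hom(F, R^m)$ and $F^* \otimes (R^m)^* \to (F\otimes R^m)^*$ each carry the obvious basis of their (free, rank-$nm$) source bijectively onto that of the target. Next I would transfer these facts along the chosen splittings. The point is that the three constructions $V \mapsto V^{**}$, $(V, V') \mapsto \Hom(V, V')$, and $(V, V') \mapsto (V \otimes V')^*$ are additive in each variable, and that — under the canonical identifications $(P\oplus Q)^{**} \cong P^{**}\oplus Q^{**}$, $\Hom(P\oplus Q,\, P'\oplus Q') \cong \Hom(P,P')\oplus\Hom(P,Q')\oplus\Hom(Q,P')\oplus\Hom(Q,Q')$, and similarly for the third — the natural transformations in question, namely $\eval{-}{R}$, the map $\phi\otimes v' \mapsto (v \mapsto (v\phi)v')$, and the map $\phi\otimes\psi \mapsto (v\otimes v' \mapsto (v\phi)(v'\psi))$, are compatible with these direct sum decompositions. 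Since a finite direct sum of $R$-homomorphisms is an isomorphism precisely when each summand is, I conclude: $\eval{R^n}{R}$ being an isomorphism forces $\eval P R$ to be one, giving \ref{prop:projective_reflexive1}; $P^*$ is a direct summand of $(R^n)^* \cong R^n$ and hence finitely generated and projective, giving \ref{prop:projective_reflexive2}; and the isomorphism $(R^n)^*\otimes R^m \xto{\sim} \Hom(R^n, R^m)$, realised as the direct sum of the four natural maps $P^*\otimes P' \to \Hom(P,P')$, $P^*\otimes Q' \to \Hom(P,Q')$, and so on, forces each of them — in particular the first — to be an isomorphism; the second map of \ref{prop:projective_reflexive3} is treated identically.

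I do not expect a genuine obstacle, the statement being folklore; as an alternative to the summand argument one could instead invoke the dual basis lemma to write down explicit two-sided inverses. The only point calling for a little care is the bookkeeping in the transfer step: one must verify that the canonical isomorphisms identifying the dual (respectively the $\Hom$-module, the dual of a tensor product) of a biproduct with the corresponding biproduct really do intertwine the explicit natural transformations listed above, so that restricting an isomorphism along $R^n = P\oplus Q$ and $R^m = P'\oplus Q'$ indeed yields isomorphisms between the constituent summands. This is a mechanical naturality check, and all the computations involved are routine.
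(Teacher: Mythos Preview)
Your argument is correct: reducing to the free case via the summand characterisation $P\oplus Q\cong R^n$ and exploiting additivity of $(\dtimes)^{**}$, $\Hom$, and $(\dtimes\otimes\dtimes)^*$ together with naturality of the comparison maps is the standard route, and the bookkeeping you flag is indeed routine. The paper itself does not give a proof at all but simply cites references (Jantzen and Milne), explicitly calling the statement folklore; your direct-summand argument is precisely the kind of proof one finds in such sources, so there is no meaningful divergence to discuss.
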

\begin{proof}
  See e.g.~\cite[Ch.\ 5, p.\ 66, Thm]{Jan64}
  for (\ref{prop:projective_reflexive1})--(\ref{prop:projective_reflexive2})
  and \cite[12.14--12.19]{milneCA} for (\ref{prop:projective_reflexive3}).
\end{proof}

Projective modules over local rings are free; see e.g.\ \stacks{0593}.
If $R$ is Noetherian and $V$ and $W$ are finitely generated
$R$-modules, then $\Hom(V,W)$ is finitely generated.

\subsection{Eight categories of module representations}
\label{ss:eight}

\paragraph{Module representations.}
A \emph{module representation} over $R$ is a homomorphism
$M \xto\theta\Hom(V,W)$, where $M$, $V$, and $W$ are $R$-modules.
We occasionally write $M = \Mod\theta$, $V = \Dom\theta$,
and $W = \Cod\theta$.
A module representation $\theta$ can be equivalently described in terms of
the bilinear \emph{multiplication}
$\Dom\theta\times \Mod\theta \xto{\mul\theta} \Cod\theta$ 
defined by $x \mul\theta a = x(a\theta)$ for $x\in \Dom\theta$ and $a\in \Mod\theta$.
The maps $\mul\theta$ are precisely the ``bimaps'' studied by Wilson et al.;
see e.g.\ \cite{Wil13,BW14,Wil15,Wil17} and also related work of First~\cite{Fir15}.
As Wilson points out~\cite[\S 1]{Wil13}, there are numerous natural categories
of bimaps---and hence of module representations.

\paragraph{The categories $\MREP_\tau(R)$.}
Let $\up$ and $\dn$ be distinct symbols and
let $\bar\dtimes$ be the operation which interchanges them.
For a category $\cC$, let $\cC^\up = \cC$ and let $\cC^\dn = \cC^\opp$
be the dual of~$\cC$.
Given a string $\tau = \tau_1\dotsb\tau_n$ over $\{\up,\dn\}$, let
$\cC^\tau = \cC^{\tau_1} \times\dotsb\times\cC^{\tau_n}$ and $\bar\tau
= \bar\tau_1\dotsb\bar\tau_n$.

\begin{defn}
  \label{d:mrep}
Let $\Types =\{ \tau = \tau_1\tau_2\tau_3 : \tau_i\in \{\up,\dn\}\}$.
For each $\tau \in \Types$, we define a category $\MREP_\tau(R)$ 
of module representations and ``$\tau$-morphisms'' over $R$ as
follows:
\begin{enumerate}
\item \label{d:mrep1}
  The objects of $\MREP_\tau(R)$ are the module representations
  $M \xto\theta\Hom(V,W)$ over $R$.
\item \label{d:mrep2}
  A \emph{$\tau$-morphism}
  $\bigl( M \xto\theta \Hom(V,W)\bigr)
  \longrightarrow
  \bigl( \tilde M \xto{\tilde\theta} \Hom(\tilde V,\tilde W)\bigr)$
  is a morphism $(M,V,W) \to (\tilde M,\tilde
  V,\tilde W)$ in $\MOD(R)^\tau$ which ``intertwines $\mul\theta$ and
  $\mul{\tilde\theta}$'' (see below);
  composition is inherited from $\MOD(R)^\tau$.
\end{enumerate}
\end{defn}

Rather than give a general but technical definition
of the intertwining condition, we list each case separately.
Let $M \xto{\theta}\Hom(V,W)$ and $\tilde M \xto{\tilde\theta} \Hom(\tilde V,
\tilde W)$ be module representations over~$R$.

\begin{itemize}
\item An \emph{$\uuu$-morphism} or \emph{homotopy} $\theta
  \to\tilde\theta$ is a triple of homomorphisms $(M \xto{\nu} \tilde M,
  V\xto{\phi} \tilde V, W \xto{\psi} \tilde W)$ such that $(x
  \mul\theta a)\psi = (x\phi) \mul{\tilde\theta} (a\nu)$ for all
  $a\in M$ and $x \in V$.
  Equivalently, the following diagram is required to commute for
  each $a\in M$:
  \[
  \begin{CD}
    {V} @>{a\theta}>> {W}\\
    @V{\phi}VV @VV{\psi}V\\
    {\tilde V} @>{(a\nu)\tilde\theta}>> \tilde W.
  \end{CD}
  \]
\item A \emph{\ddu-morphism} $\theta\to\tilde\theta$ is a triple of
  homomorphisms
  $(\tilde M \xto{\tilde \nu} M, \tilde V \xto{\tilde\phi} V, W \xto\psi
  \tilde W)$ such that
  $\tilde x \mul{\tilde\theta}\tilde a = (\tilde x \tilde \phi
  \mul\theta \tilde a \tilde \nu)\psi$
  for all $\tilde a \in \tilde M$ and $\tilde x \in \tilde V$.
\item An \emph{\udd-morphism} $\theta\to\tilde\theta$ is a triple of
  homomorphisms
  $(M \xto{ \nu} \tilde M, \tilde V \xto{\tilde\phi} V, \tilde W \xto{\tilde\psi} W)$
  such that
  $\tilde x\tilde\phi \mul\theta a = (\tilde x \mul{\tilde\theta} a\nu)\tilde\psi$
  for all $a\in M$ and $\tilde x\in\tilde V$.
\item A \emph{$\dud$-morphism} $\theta \to\tilde\theta$ is a triple of homomorphisms
  $(\tilde M \xto{\tilde\nu} M, V\xto{\phi} \tilde V, \tilde W \xto{\tilde\psi} W)$ such that
  $x\mul\theta \tilde a\tilde\nu  = (x\phi \mul{\tilde\theta}\tilde a)\tilde\psi$
  for all $\tilde a\in \tilde M$ and $x \in V$.
  Equivalently, the following diagram is required to commute:
  \[
  \begin{CD}
    {\tilde M} @>{\tilde\theta}>> {\Hom(\tilde V,\tilde W)}\\
    @V{\tilde\nu}VV @VV{\Hom(\phi,\tilde\psi)}V\\
    {M} @>{\theta}>> {\Hom(V,W)}.
  \end{CD}
  \]
\item
  A $\bar\tau$-morphism $\theta\to\tilde\theta$
  is a $\tau$-morphism $\tilde\tau\to\tau$.
  (Hence, $\MREP_{\bar\tau}(R) = \MREP_\tau(R)^\opp$.)
\end{itemize}

\clearpage

\begin{rem}
  \label{rem:mrep}
  \quad
  \begin{enumerate}
  \item \label{rem:mrep1}
    We write $\MREP(R) := \MREP_{\uuu}(R)$ for the \emph{homotopy
      category} of module representations over $R$.
    Our use of the word homotopy,
    as well as of \emph{isotopy} (=~invertible homotopy), goes back to work of
    Albert~\cite{Alb42}.
    Wilson~\cite[\S 1.2]{Wil13} calls such morphisms homotopisms and isotopisms,
    respectively;
    his homotopism category of $(R,R)$-bimaps is equivalent to our
    $\MREP(R)$.
  \item \label{rem:mrep2} 
    Two module representations $\theta$ and $\tilde\theta$ are
    $\tau$-isomorphic for some $\tau\in \Types$ if and only if they 
    are isotopic.
    We write $$\theta \approx \tilde\theta$$ to signify the
    existence of an isotopy $\theta \to \tilde\theta$.
  \end{enumerate}
\end{rem}

\subsection{Additive structure: direct and collapsed sums}
\label{ss:addition}

Given module representations $M\xto\theta \Hom(V,W)$ and $\tilde M
\xto{\tilde\theta}\Hom(\tilde V,\tilde W)$ over $R$, their \emph{direct sum}
(= categorical biproduct) $\theta\oplus\tilde\theta$ in $\MREP(R)$ is the
module representation
\[
M \oplus \tilde M \xto{\theta\oplus\tilde\theta} \Hom(V\oplus\tilde
V,W\oplus \tilde W),
\quad
(a,\tilde a) \mapsto a\theta \oplus \tilde a\tilde\theta.
\]

For any module $U$, let $U^m \xto{\Sigma_U^m} U$ denote $m$-fold
addition and let $U \xto{\Delta_U^m} U^m$ be the diagonal map;
for $m = 2$, we often drop the superscripts.
Let $\theta$ and $\tilde\theta$ be module representations over $R$.
Suppose that $\Mod\theta=\Mod{\tilde\theta}$. We may then ``collapse'' the
module $\Mod{\theta\oplus\tilde\theta} = \Mod\theta^2$ down to a single copy of
$\Mod\theta$ by considering the module representation
\[
\Mod\theta \xto{\Delta_{\Mod\theta}} \Mod\theta^2 \xlongequal{\phantom{???}}
\Mod{\theta\oplus\tilde\theta} \xto{\theta\oplus\tilde\theta}
\Hom(\Dom{\theta\oplus\tilde\theta},\Cod{\theta\oplus\tilde\theta}).
\]
Similarly, if $\Dom\theta = \Dom{\tilde\theta}$ or $\Cod\theta =
\Cod{\tilde\theta}$, we may collapse $\Dom{\theta\oplus\tilde\theta}$
and $\Cod{\theta\oplus\tilde\theta}$ down to $\Dom\theta$ and
$\Cod\theta$, respectively, by considering the module
representations 
\begin{align*}
  \Mod{\theta\oplus\tilde\theta}&
  \xto{\theta\oplus\tilde\theta}\Hom(\Dom{\theta\oplus\tilde\theta},\Cod{\theta\oplus\tilde\theta})
  \xto{\Hom(\Delta_{\Dom\theta},\Cod{\theta\oplus\tilde\theta})}
  \Hom(\Dom\theta, \Cod{\theta\oplus\tilde\theta}) \text{\quad and} \\
  \Mod{\theta\oplus\tilde\theta}&
  \xto{ \theta\oplus\tilde\theta}
  \Hom(\Dom{\theta\oplus\tilde\theta},\Cod{\theta\oplus\tilde\theta})  
  \xto{\Hom(\Dom{\theta\oplus\tilde\theta},\Sigma_{\Cod\theta})}
  \Hom(\Dom{\theta\oplus\tilde\theta},\Cod\theta),
\end{align*}
respectively.
We refer to such module representations as \emph{collapsed sums}.
We shall be particularly interested in module representations of the
form $\theta^m := \theta \oplus \dotsb\oplus \theta$ ($m$-fold sum).
In this case, we derive module representations
$\coll\theta m := \Delta_{\Mod{\theta}}^m \dtimes \theta^m,$ $\theta^m \dtimes
\Hom(\Dom{\theta}^m,\Sigma^m_{\Cod{\theta}})$, and $\theta^m \dtimes \Hom(\Delta_{\Dom{\theta}}^m,\Cod{\theta}^m)$ 
which we call the \emph{collapsed $m$th powers} of~$\theta$.

\section{Average sizes of kernels and \Ask{} zeta functions---revisited}
\label{s:ask}

We collect and adapt notions and results from \cite{ask}, expressed
in the language developed here. 
Let $R$ be a ring; unless otherwise indicated, all module representations are over~$R$.

\subsection{Average sizes of kernels}

For a module representation $M\xto\theta\Hom(V,W)$ such that $M$ and $V$ are
finite as sets and a non-negative integer $m$, let
\[
\askm\theta m = \frac 1 {\card M} \sum_{a\in M}\card{\Ker(a\theta)}^m;
\]
note that $\askm\theta m$ only depends on the isotopy class of $\theta$.
Further note that if $M/\Ker(\theta) \xto{\bar\theta}\Hom(V,W)$ denotes the
map induced by $\theta$, then
$\askm \theta m = \askm {\bar\theta} m$.
We write $\ask\theta = \askm\theta 1$ for the average size
of the kernel of the elements of $M$ acting as linear maps $V \to W$ via $\theta$.
Even though this was not previously observed in \cite{ask}, the study of the
function $\ask{\dtimes\,}$ actually includes the more general functions
$\askm{\dtimes\,}m$; see Lemma~\ref{lem:askm_coll}.

\subsection{Direct and collapsed sums}

Recall the definitions of $\theta\oplus\tilde\theta$, $\theta^m$, and
$\coll\theta m$ from \S\ref{ss:addition}.

\begin{lemma}[{Cf.\ \cite[Lem.\ 2.2]{ask}}]
  \label{lem:delta_ask}
  Let $M \xto\theta\Hom(V,W)$ and 
  $\tilde M \xto{\tilde\theta}\Hom(\tilde V,\tilde W)$ be module
  representations.
  Suppose that the modules $M$, $\tilde M$, $V$, and $\tilde V$ are
  all finite as sets.
  Then
  $\ask{\theta\oplus\tilde\theta} = \ask\theta \ask{\tilde\theta}$.
  In particular, $\ask{\theta^m} = {\ask\theta}^m$.
\end{lemma}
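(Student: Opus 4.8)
The plan is to compute $\ask{\theta\oplus\tilde\theta}$ directly from the definition and factor the resulting double sum. Recall that the underlying module of $\theta\oplus\tilde\theta$ is $M\oplus\tilde M$, that its domain is $V\oplus\tilde V$, and that for a pair $(a,\tilde a)\in M\oplus\tilde M$ the associated linear map is $(a\theta)\oplus(\tilde a\tilde\theta)\colon V\oplus\tilde V\to W\oplus\tilde W$. The key observation is that for a direct sum of linear maps the kernel splits: $\Ker\bigl((a\theta)\oplus(\tilde a\tilde\theta)\bigr) = \Ker(a\theta)\oplus\Ker(\tilde a\tilde\theta)$, so that $\card{\Ker\bigl((a,\tilde a)(\theta\oplus\tilde\theta)\bigr)} = \card{\Ker(a\theta)}\cdot\card{\Ker(\tilde a\tilde\theta)}$ as sets (using that everything in sight is finite). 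This is the one genuinely ``moving part'' of the argument, and it is entirely elementary.

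Given this, I would write
\[
\ask{\theta\oplus\tilde\theta}
= \frac{1}{\card{M\oplus\tilde M}}\sum_{(a,\tilde a)\in M\oplus\tilde M}\card{\Ker\bigl((a,\tilde a)(\theta\oplus\tilde\theta)\bigr)}
= \frac{1}{\card M\,\card{\tilde M}}\sum_{a\in M}\sum_{\tilde a\in\tilde M}\card{\Ker(a\theta)}\,\card{\Ker(\tilde a\tilde\theta)},
\]
using $\card{M\oplus\tilde M} = \card M\,\card{\tilde M}$ (all modules here are finite as sets). The double sum factors as
\[
\Biggl(\frac{1}{\card M}\sum_{a\in M}\card{\Ker(a\theta)}\Biggr)\Biggl(\frac{1}{\card{\tilde M}}\sum_{\tilde a\in\tilde M}\card{\Ker(\tilde a\tilde\theta)}\Biggr) = \ask\theta\,\ask{\tilde\theta},
\]
which is the first claim. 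Note that the finiteness hypotheses on $M,\tilde M,V,\tilde V$ are exactly what is needed: finiteness of $M$ and $\tilde M$ makes the averaging sums finite, and finiteness of $V$ and $\tilde V$ guarantees that each $\card{\Ker(a\theta)}$ and $\card{\Ker(\tilde a\tilde\theta)}$ is finite (no reflexivity or projectivity is required here).

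The ``in particular'' clause follows by a trivial induction on $m$: $\theta^0$ is the zero module representation with $\ask{\theta^0}=1={\ask\theta}^0$, and $\theta^{m} = \theta^{m-1}\oplus\theta$, so $\ask{\theta^m} = \ask{\theta^{m-1}}\,\ask\theta = {\ask\theta}^{m}$ by the first part and the inductive hypothesis. There is no real obstacle here; the only point requiring a moment's care is the set-theoretic identification of $\Ker$ of a direct sum of maps with the direct sum of the kernels, together with checking that $\ask$ depends only on the isotopy class (already recorded in \S\ref{s:ask}) so that the chosen representative $\theta\oplus\tilde\theta$ of the biproduct is harmless.
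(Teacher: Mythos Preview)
Your argument is correct and is exactly the standard direct computation one would expect. The paper itself does not give a proof of this lemma at all; it simply cites \cite[Lem.~2.2]{ask}, and your factorisation of the double sum via $\Ker\bigl((a\theta)\oplus(\tilde a\tilde\theta)\bigr)=\Ker(a\theta)\oplus\Ker(\tilde a\tilde\theta)$ is precisely the elementary reasoning that underlies that reference.
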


The average size of the kernel associated with the first collapsing
operation in \S\ref{ss:addition} admits the following description.

\begin{lemma}
  \label{lem:askm_coll}
  Let $M\xto\theta\Hom(V,W)$ and $M\xto{\tilde\theta}\Hom(\tilde
  V,\tilde W)$ be module representations.
  Suppose that the modules $M$, $V$, and $\tilde V$ are finite as sets.
  Then
  \[
  \ask{\Delta_M (\theta\oplus\tilde\theta)} =
  \frac 1{\card M} \sum_{a\in M} \card{\Ker(a\theta)}\dtimes \card{\Ker(a\tilde\theta)}.
  \]
  In particular, $\ask{\coll \theta m} = \askm \theta m$.
\end{lemma}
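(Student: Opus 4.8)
The plan is to compute the left-hand side directly from the definition of $\ask{\dtimes\,}$ applied to the module representation $\Delta_M(\theta\oplus\tilde\theta)$, and then to identify the resulting sum with the claimed expression by understanding the kernel of each fibre map. First I would unwind the construction from \S\ref{ss:addition}: the module representation $\Delta_M(\theta\oplus\tilde\theta)$ has module part $\Mod{\Delta_M(\theta\oplus\tilde\theta)} = M$ (via the diagonal), domain $V\oplus\tilde V$, and codomain $W\oplus\tilde W$; for $a\in M$ the corresponding linear map $V\oplus\tilde V \to W\oplus\tilde W$ is $(x,\tilde x) \mapsto (x(a\theta),\,\tilde x(a\tilde\theta))$, i.e.\ the block-diagonal map $a\theta \oplus a\tilde\theta$. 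The key step is the elementary observation that for such a block-diagonal map, $\Ker(a\theta\oplus a\tilde\theta) = \Ker(a\theta)\oplus\Ker(a\tilde\theta)$, so that $\card{\Ker((\Delta_M(\theta\oplus\tilde\theta))\text{ at }a)} = \card{\Ker(a\theta)}\cdot\card{\Ker(a\tilde\theta)}$.

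Substituting this into $\ask{\Delta_M(\theta\oplus\tilde\theta)} = \frac{1}{\card M}\sum_{a\in M}\card{\Ker(\cdots)}$ gives the displayed formula immediately; I should note that the finiteness of $M$, $V$, $\tilde V$ ensures both that $\ask{\dtimes\,}$ is defined for $\Delta_M(\theta\oplus\tilde\theta)$ (its module and domain, namely $M$ and $V\oplus\tilde V$, are finite) and that every summand is a finite cardinality. For the ``in particular'' clause, I would take $\tilde\theta = \theta$, so that $\Delta_M(\theta\oplus\theta) = \coll\theta 2$ in the notation of \S\ref{ss:addition}; the formula then reads $\ask{\coll\theta 2} = \frac{1}{\card M}\sum_{a\in M}\card{\Ker(a\theta)}^2 = \askm\theta 2$. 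The general statement $\ask{\coll\theta m} = \askm\theta m$ for arbitrary $m$ follows by the same argument applied to the $m$-fold diagonal $\Delta_M^m$ and the $m$-fold direct sum $\theta^m$, using $\Ker((a\theta)^{\oplus m}) = \Ker(a\theta)^{\oplus m}$; alternatively one iterates the two-variable case, or invokes an evident induction on $m$.

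I do not expect any serious obstacle here: the only content is the block-diagonal kernel identity, which is routine linear algebra, plus careful bookkeeping of which module is being collapsed and of the finiteness hypotheses needed for $\ask{\dtimes\,}$ to make sense. The mild subtlety worth spelling out is that collapsing via $\Delta_M$ does \emph{not} change the cardinality of the indexing module (it is still $M$, not $M^2$), which is exactly why an \emph{average} of a product appears rather than a product of averages — contrast with Lemma~\ref{lem:delta_ask}, where no collapsing occurs and the module part is genuinely $M\oplus\tilde M$.
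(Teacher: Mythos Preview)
Your proposal is correct and follows exactly the paper's approach: the paper's proof is the single line ``$\Ker(a (\Delta_M (\theta \oplus \tilde\theta))) = \Ker(a\theta) \oplus \Ker(a\tilde\theta)$ for $a\in M$'', which is precisely your block-diagonal kernel identity. Your write-up simply spells out the surrounding bookkeeping more carefully than the paper does.
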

\begin{proof}
  $\Ker(a (\Delta_M (\theta \oplus \tilde\theta))) = \Ker(a\theta) \oplus
  \Ker(a\tilde\theta)$ for $a\in M$.
\end{proof}

We will encounter another natural source of numbers $\askm{\dtimes\,}2$
in Theorem~\ref{thm:ask_sk} below.

\subsection{Change of scalars for module representations}
\label{ss:ext}

Let $\tilde R \xto\phi R$ be a ring homomorphism.
Given a module representation $\tilde M\xto{\tilde\theta}\Hom(\tilde V,\tilde
W)$ over $\tilde R$,
we define the \emph{change of scalars} $\tilde \theta^\phi$ of $\tilde\theta$ along $\phi$
to be the module representation
$\tilde M\otimes_{\tilde R} R \to \Hom(\tilde V \otimes_{\tilde R} R, \,\tilde
W\otimes_{\tilde R} R)$,
where we regarded $R$ as an $\tilde R$-module via $\phi$;
when the reference to $\phi$ is clear from the context, we simply write
$\tilde\theta^R = \tilde\theta^\phi$.

\subsection{On \Ask{} and \Askm m{} \!zeta functions}
\label{ss:ask_zeta}

Let $\fO$ be a compact \DVR{}; recall the notation from the end of \S\ref{s:intro}.
For $y\in \fO$, write $\fO_y = \fO/y$ and let $(\dtimes)_y =
(\dtimes)\otimes \fO_y\colon \MOD(\fO)\to\MOD(\fO_y)$.
For $n \ge 0$, we write $(\dtimes)_n = (\dtimes)_{\pi^n}$, where $\pi \in
\fP\setminus\fP^2$ is an arbitrary uniformiser.
Let $M\xto\theta\Hom(V,W)$ be a module representation over $\fO$.
Suppose that $M$, $V$, and $W$ are finitely generated.
Then $M_y$, $V_y$, and $W_y$ are finite as sets for each $y\in \fO\setminus\{0\}$.

\begin{defn}[{Cf.\ \cite[Def.\ 1.3]{ask}}]
The \emph{\Askm m{}\! zeta function} of $\theta$ is the generating function
\[
\Zeta^m_\theta(T) = \sum_{n=0}^\infty
\askm{\theta^{\fO_n}} m \,T^n \in \QQ\llb T \rrb.
\]
We write $\Zeta_\theta(T) := \Zeta^1_\theta(T)$ for the \emph{\Ask{} zeta function} of $\theta$.
If $M\subset \Hom(V,W)$ and $\theta_M$ denotes this inclusion map, then we write
$\Zeta^m_M(T) := \Zeta^m_{\theta_M}(T)$ and $\Zeta_M(T) := \Zeta_{\theta_M}(T)$.
\end{defn}

\begin{ex}[{Cf.\ \cite[\S 1]{ask}}]
  $\displaystyle \Zeta^m_{\{0\} \incl \Mat_{d}(\fO)}(T) =
  \sum\limits_{n=0}^\infty q^{dmn} T^n = 1/(1-q^{dm}T)$.
\end{ex}

By Lemma~\ref{lem:askm_coll}, 
$\Zeta^m_\theta(T) = \Zeta_{\coll\theta m}(T)$.
Thus, it might seem that \Askm m\! zeta functions offer
nothing new over the \Ask{} zeta functions studied in~\cite{ask}.
On the contrary, we will see in \S\ref{s:cc_ask2} that \Askm 2 zeta
functions naturally arise in the study of class numbers of finite groups.

\subsection{Integral formalism and rationality}
\label{ss:intrat}

Let $\fO$ and $\theta$ be as in \S\ref{ss:ask_zeta}.
For $y\in \fO$, let $M \xonto{\pi_y} M_y$ denote the projection.
Define
\[
\kersize_\theta\colon M \times \fO \to \RR\cup\{\infty\}, \quad
(a,y) \mapsto \bigl\lvert\Ker((a\pi_y) \theta^{\fO_y})\bigr\rvert.
\]

Let $\mu_U$ denote the Haar measure on a finitely generated $\fO$-module $U$
with $\mu_U(U) = 1$.
Let $\abs{\dtimes}$ denote the absolute value on the field of
fractions $K$ of $\fO$ with $\abs\pi = q^{-1}$
for $\pi\in\fP\setminus\fP^2$, where $q$ denotes the residue field size of $\fO$.
We write $$\zeta^m_\theta(s) := \Zeta_\theta^m(q^{-s})$$ and similarly
$\zeta_\theta(s)$, $\zeta_M(s)$, etc.\ for the Dirichlet series associated
with $\Zeta_\theta^m(T)$.

\begin{thm}[{Cf.\ \cite[Thms~4.5, 4.10]{ask}}]
  \label{thm:int}
  Let $M \xto\theta \Hom(V,W)$ be a module representation over~$\fO$.
  Suppose that $M$, $V$, and $W$ are free of finite ranks.
  Then for all $s \in \CC$ with $\Real(s) > m \dtimes \rank(V)$,
  \[
  (1-q^{-1}) \dtimes \zeta^m_\theta(s) =
  \int_{M \times \fO} \abs{y}^{s-1} \dtimes \kersize_\theta(a,y)^m
  \,\dd\mu_{M\times \fO}(a,y).
  \]
  Moreover, if $\fO$ has characteristic zero, then $\Zeta^m_\theta(T) \in \QQ(T)$.
\end{thm}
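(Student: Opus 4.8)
The plan is to reduce at once to the case $m=1$ and then to transcribe the proof of \cite[Thms~4.5, 4.10]{ask}, where an \emph{inclusion} $M\subset\Hom(V,W)$ was treated; nothing in that argument uses injectivity of the inclusion, so it applies to an arbitrary module representation $\theta$ (one may, if one prefers, first replace $\theta$ by the induced injection $\bar\theta$ on $M/\Ker(\theta)$, for which $\kersize$ and $\askm{\dtimes}{m}$ are unchanged). For the reduction: base change commutes with direct sums and with diagonal maps, so $(\coll{\theta}{m})^{\fO_n}\cong\coll{(\theta^{\fO_n})}{m}$ for every $n$, and the identity $\Ker(a(\Delta_M(\vartheta^{\oplus m}))) = \Ker(a\vartheta)^{\oplus m}$ from the proof of Lemma~\ref{lem:askm_coll} (applied over $\fO_n$) gives $\kersize_{\coll{\theta}{m}}(a,y) = \kersize_\theta(a,y)^m$ and $\ask{(\coll{\theta}{m})^{\fO_n}} = \askm{\theta^{\fO_n}}{m}$, hence $\Zeta_{\coll{\theta}{m}}(T) = \Zeta^m_\theta(T)$. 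Since $\Mod{\coll{\theta}{m}} = M$, $\Dom{\coll{\theta}{m}} = V^m$, and $\Cod{\coll{\theta}{m}} = W^m$ are free of finite rank with $\rank(V^m) = m\rank(V)$, the theorem for $(\theta,m)$ follows from the theorem for $(\coll{\theta}{m},1)$. So assume $m=1$.

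To obtain the integral formula I would first push the finite average forward onto $M$. Write $M_n = M/\fP^nM$; the reduction map $M\to M_n$ has fibres equal to cosets of $\fP^nM$, each of $\mu_M$-mass $q^{-n\rank(M)} = \card{M_n}^{-1}$, so pulling $\bar a\mapsto\card{\Ker(\bar a\,\theta^{\fO_n})}$ back along it and integrating gives
\[
\ask{\theta^{\fO_n}} = \int_M \kersize_\theta(a,\pi^n)\,\dd\mu_M(a).
\]
Now $\kersize_\theta(a,y)$ depends on $y\ne 0$ only through its valuation, since the ideal $y\fO$ does; thus $\kersize_\theta(a,\dtimes)$ equals $\kersize_\theta(a,\pi^n)$ on the shell $\pi^n\fO^\times$, which has $\mu_\fO$-mass $q^{-n}(1-q^{-1})$ and on which $\abs{y}^{s-1} = q^{-n(s-1)}$; the null set $\{0\}$ does not contribute. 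Decomposing $\fO$ into these shells and summing turns the right-hand side of the claimed identity into $(1-q^{-1})\sum_{n\ge 0}q^{-ns}\ask{\theta^{\fO_n}} = (1-q^{-1})\zeta_\theta(s)$. The crude bound $\kersize_\theta(a,\pi^n)\le\card{V_n} = q^{n\rank(V)}$ dominates the integrand by $\sum_{n}q^{n(\rank(V)-\Real(s))}$; this secures absolute convergence for $\Real(s)>\rank(V)$ and, all quantities being non-negative, justifies the interchange of sum and integral by Tonelli's theorem.

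For rationality, assume $\Char(\fO) = 0$, so $\fO$ is a finite extension of $\ZZ_p$ for the residue characteristic $p$. Fixing $\fO$-bases of $V$ and $W$ presents each $a\theta$ as a matrix over $\fO$; if its elementary divisors are $\pi^{e_1},\dots,\pi^{e_{\rank(V)}}$ (allowing $e_i = \infty$ for vanishing ones), then a Smith-normal-form computation gives $\kersize_\theta(a,\pi^n) = q^{\sum_i\min(e_i,n)}$, so that $\kersize_\theta(a,y)$ is determined by $v(y)$ together with the finitely many ranks of $a\theta$ modulo powers of $\fP$. Each such rank condition is a Boolean combination of (non-)vanishing statements about minors, so $M$ is a finite disjoint union of $p$-adically semialgebraic sets on which $\kersize_\theta(a,\pi^n)$ is $q$ to an explicit affine-linear function of $n$; consequently $\int_{M\times\fO}\abs{y}^{s-1}\kersize_\theta(a,y)\,\dd\mu_{M\times\fO}(a,y)$ is a $p$-adic integral of a definable integrand, and Denef's rationality theorem---invoked exactly as in \cite[proof of Thm~4.10]{ask}---shows it is a rational function of $q^{-s}$. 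Hence $\Zeta_\theta(T)\in\QQ(T)$, and $\Zeta^m_\theta(T)\in\QQ(T)$ by the first step.

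I expect the rationality step to be the only real obstacle: one must either invoke the $p$-adic cell-decomposition and definability machinery (as \cite{ask} does) or else make the stratification of $M$ by elementary-divisor type explicit enough to integrate piece by piece, and keeping the ``shell'' variable $y$ coupled correctly to that stratification requires some bookkeeping. Everything else---the reduction to $m=1$ and the manipulation of Haar integrals---is routine, and the absence of any injectivity hypothesis on $\theta$ causes no difficulty.
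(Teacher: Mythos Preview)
The paper does not give a proof of this theorem; it simply records the statement with a ``Cf.'' to \cite[Thms~4.5, 4.10]{ask} and, a few lines earlier, notes that $\Zeta^m_\theta(T) = \Zeta_{\coll\theta m}(T)$ by Lemma~\ref{lem:askm_coll}. Your reduction to $m=1$ via $\coll\theta m$ is exactly this observation spelled out, and your argument for $m=1$ (fibre integration over $M$, shell decomposition of $\fO$, Tonelli, then Denef for rationality) is the argument of \cite{ask} transcribed to a possibly non-injective $\theta$. So your proposal is correct and matches what the paper intends.

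One small caveat: your parenthetical suggestion to pass to $\bar\theta$ on $M/\Ker(\theta)$ is not a safe alternative here, since $M/\Ker(\theta)$ need not be free over $\fO$ (think of a non-saturated kernel), and the integral formalism as stated requires freeness of $M$. Your main line---that injectivity is simply never used---is the right way around this; I would drop the parenthetical.
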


\subsection{Constant rank and minimality}
\label{ss:crk}

Let $M \xto\theta\Hom(V,W)$ be a module representation over a \itemph{field}.
The \emph{generic rank} $\genrank(\theta)$ of $\theta$ is $\sup(\rank(a\theta): a\in M)$.
We say that $\theta$ has \emph{constant rank $r$} if $M \not= 0$ and $a\theta$
has rank $r$ for each $a\in M \setminus \{0\}$.
Let $\fO$ be a compact \DVR{} with field of fractions $K$.
Let $M \xto\theta\Hom(V,W)$ be a module representation over $\fO$,
where $M$, $V$, and $W$ are free of finite ranks.
Let $r = \genrank(\theta^K)$ (see \S\ref{ss:ext}), $d = \dim(V\otimes K)$, and
$\ell = \dim(M\otimes K)$.

\begin{defn}[{Cf.\ \cite[\S 6.1]{ask}}]
  \label{d:Kmin}
  We say that $\theta$ is \emph{faithfully $\kersize$-minimal} if 
  $\kersize_\theta(a,y) = \abs{y}^{r-d}$
  for all $a\in M\setminus\fP M$ and $y\in \fO\setminus\{0\}$.
  We say that  $\theta$ is \emph{$\kersize$-minimal} if the induced module representation
  $M/\Ker(\theta) \xto\theta \Hom(V,W)$ is faithfully
  $\kersize$-minimal.
\end{defn}

Using a variation of \cite[Lem.~5.6]{ask}, one can show that
$\theta$ is $\kersize$-minimal if and only $M\theta\subset\Hom(V,W)$ is
$\kersize$-minimal in the sense of \cite[Def.\ 6.3]{ask}. 

\begin{prop}[{Cf.\ \cite[Prop.\ 6.4]{ask} (for $m=1$)}]
  \label{prop:askm_minimal}
  \quad
  \begin{enumerate}
  \item 
  \label{prop:askm_minimal1}
    If $\theta$ is faithfully $\kersize$-minimal, then
  \begin{equation}
    \label{eq:Kmin}
    \Zeta^m_\theta(T) =
    \frac{1-q^{ (d-r)m - \ell} T}
    {
      (1-q^{dm-\ell}T)
      (1-q^{(d-r)m}T)
    }.
  \end{equation}
\item
  \label{prop:askm_minimal2}
  Conversely, if \eqref{eq:Kmin} holds for any $m\ge 1$, then $\theta$ is
  faithfully $\kersize$-minimal. \qedhere 
  \end{enumerate}
\end{prop}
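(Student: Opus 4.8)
The plan is to derive both directions of Proposition~\ref{prop:askm_minimal} from the integral formula in Theorem~\ref{thm:int}, exploiting the explicit shape of $\kersize_\theta$ that faithful $\kersize$-minimality prescribes. For part~(\ref{prop:askm_minimal1}), I would start from
\[
(1-q^{-1})\dtimes\zeta^m_\theta(s) = \int_{M\times\fO}\abs y^{s-1}\dtimes\kersize_\theta(a,y)^m\,\dd\mu_{M\times\fO}(a,y),
\]
and split the domain $M\times\fO$ according to whether $a\in\fP M$ or $a\in M\setminus\fP M$. On the ``nice'' stratum $a\in M\setminus\fP M$ (which has measure $1-q^{-\ell}$, since $\fP M$ has index $q^\ell$ in $M$), faithful $\kersize$-minimality gives $\kersize_\theta(a,y)=\abs y^{r-d}$, so that part of the integral factors as $(1-q^{-\ell})\int_{\fO}\abs y^{s-1+(r-d)m}\,\dd\mu_\fO(y)$, and the standard computation $\int_\fO\abs y^{t-1}\,\dd\mu_\fO(y) = (1-q^{-1})/(1-q^{-t})$ makes this explicit. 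For the remaining stratum $a\in\fP M$, I would iterate: writing $a = \pi a'$, the behaviour of $\kersize_\theta(\pi a', y)$ should reduce to that of $\kersize_\theta(a', \pi^{-1}y)$-type terms together with a contribution from the kernel growing by a full copy of $V_1$; concretely, one expects a self-similarity relation of the form $\zeta^m_\theta(s) = (\text{nice part}) + q^{dm-\ell-s}\zeta^m_\theta(s)$, whose solution is a rational function. Collecting the geometric series and clearing denominators should produce exactly the right-hand side of~\eqref{eq:Kmin}; matching the three factors $(1-q^{(d-r)m-\ell}T)$, $(1-q^{dm-\ell}T)$, $(1-q^{(d-r)m}T)$ is then a bookkeeping check.

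An alternative, and perhaps cleaner, route for part~(\ref{prop:askm_minimal1}) is to bypass the integral and instead use Lemma~\ref{lem:askm_coll} together with the corresponding $m=1$ statement \cite[Prop.~6.4]{ask}: since $\Zeta^m_\theta(T)=\Zeta_{\coll\theta m}(T)$, it suffices to check that $\coll\theta m$ is again faithfully $\kersize$-minimal with the appropriately scaled invariants. Here $\Dom{\coll\theta m}=\Dom\theta$ embedded diagonally, $\Cod{\coll\theta m}=\Cod\theta^m$, and $\Mod{\coll\theta m}=\Mod\theta$; the generic rank of $\coll\theta m$ over $K$ is $mr$ and the domain dimension is still $d$, while $\ell$ is unchanged. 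Then $\kersize_{\coll\theta m}(a,y)=\kersize_\theta(a,y)^m=\abs y^{(r-d)m}=\abs y^{mr-d}$ on $a\in M\setminus\fP M$, so $\coll\theta m$ is faithfully $\kersize$-minimal with parameters $(mr, d, \ell)$, and plugging these into the $m=1$ formula of \cite[Prop.~6.4]{ask} yields precisely~\eqref{eq:Kmin}. I would likely present this reduction as the main argument, since it isolates the one genuine computation in the already-cited case $m=1$.

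For the converse, part~(\ref{prop:askm_minimal2}), I would argue by contraposition: assume $\theta$ is \emph{not} faithfully $\kersize$-minimal and show~\eqref{eq:Kmin} fails for every $m\ge 1$. Since $\kersize_\theta(a,y)\ge \abs y^{r-d}$ always holds (the generic kernel bound; the kernel of $a\theta^{\fO_y}$ over $\fO_y$ has size at least $\abs y^{r-d}$ because $r=\genrank(\theta^K)$), non-minimality means there exist $a_0\in M\setminus\fP M$ and $y_0\in\fO\setminus\{0\}$ with $\kersize_\theta(a_0,y_0) > \abs{y_0}^{r-d}$; by upper semicontinuity / openness of the rank-drop locus this persists on a set of positive measure in $M\setminus\fP M$. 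Feeding this strict inequality into the integral formula of Theorem~\ref{thm:int} shows that $\zeta^m_\theta(s)$ is \emph{strictly} larger, coefficientwise, than the value it would have under minimality; equivalently, $\askm{\theta^{\fO_n}}m$ exceeds the coefficient of $T^n$ in the right-hand side of~\eqref{eq:Kmin} for all large $n$, contradicting~\eqref{eq:Kmin}. A small amount of care is needed to see that the extra contribution does not somehow cancel: here one uses that $m\ge 1$, so $\kersize_\theta(a,y)^m$ is a strictly increasing function of $\kersize_\theta(a,y)$, hence the excess is genuinely positive and additive.

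The main obstacle I anticipate is the converse direction: extracting a clean contradiction from~\eqref{eq:Kmin} requires controlling the full non-minimal stratum, not just a single point, and making precise that the ``defect'' in the integral cannot be absorbed by reindexing. The cited result \cite[Lem.~5.6]{ask} (and the remark just before the proposition relating $\kersize$-minimality of $\theta$ to that of $M\theta\subset\Hom(V,W)$) should supply the structural input needed to pass from a pointwise failure to a positive-measure failure. I would also need to double-check the reduction $\theta\mapsto M/\Ker(\theta)\xto{\bar\theta}\Hom(V,W)$ interacts correctly with everything, but this is routine given the observation $\askm\theta m = \askm{\bar\theta}m$ recorded earlier.
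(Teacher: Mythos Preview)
Your first route for part~(\ref{prop:askm_minimal1})---the integral from Theorem~\ref{thm:int}, stratified by $a\in\fP M$ versus $a\notin\fP M$, with a self-similarity relation on the $\fP M$-stratum---is essentially what the paper does: it simply invokes Theorem~\ref{thm:int} together with \cite[Lem.~5.8]{ask}, the latter being precisely the lemma that evaluates such stratified integrals. Your contraposition sketch for part~(\ref{prop:askm_minimal2}) is likewise in line with the paper, which just says the argument is analogous to the ``if part'' of \cite[Prop.~6.4]{ask}.

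Your alternative route via $\coll\theta m$, which you say you would present as the main argument, contains a genuine slip. By definition $\coll\theta m = \Delta^m_{\Mod\theta}\dtimes\theta^m$, so it is $\Mod\theta$ that is collapsed diagonally, while $\Dom{\coll\theta m} = \Dom{\theta^m} = V^m$ has rank $md$, not $d$. Your assertion that ``the domain dimension is still $d$'' and the accompanying arithmetic $(r-d)m = mr-d$ are both wrong; with your stated parameters $(mr,d,\ell)$ the $m=1$ formula from \cite[Prop.~6.4]{ask} would give
\[
\frac{1-q^{d-mr-\ell}T}{(1-q^{d-\ell}T)(1-q^{d-mr}T)},
\]
which is not \eqref{eq:Kmin}. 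The fix is immediate once the modules are correctly identified: with the actual parameters $(mr,md,\ell)$ one has $\kersize_{\coll\theta m}(a,y)=\kersize_\theta(a,y)^m=\abs y^{mr-md}$ for $a\in M\setminus\fP M$, and the $m=1$ formula then does produce \eqref{eq:Kmin}. So the reduction is a valid alternative to the paper's direct integral computation, but only after this correction.
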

\begin{proof}
  For (\ref{prop:askm_minimal1}),
  combine Theorem~\ref{thm:int} and \cite[Lem.\ 5.8]{ask}.
  Part~(\ref{prop:askm_minimal2}) is analogous to the ``if part'' of
  \cite[Prop.\ 6.4]{ask}.
\end{proof}

The following is a local counterpart of \cite[Prop.\ 6.8]{ask}.

\begin{prop}
  \label{prop:minimal_crk}
  Let $M\xto\theta\Hom(V,W)$ be a module representation over $\fO$.
  Suppose that each of $M$, $V$, and $W$ is free of finite rank.
  Let $r = \genrank(\theta^K)$ and suppose that $\theta^{\RF}$
  has constant rank $r$, where $\RF = \fO/\fP$.
  Then $\theta$ is faithfully $\kersize$-minimal.
\end{prop}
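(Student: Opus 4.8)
The plan is to fix a single element $a \in M \setminus \fP M$, reduce the whole statement to the study of the $\fO$-matrix of the map $a\theta\colon V\to W$, and then read off $\kersize_\theta(a,y)$ for every $y$ from a Smith normal form. First I would choose $\fO$-bases of $V$ and $W$ — legitimate since both are free of finite rank — so that $a\theta\in\Hom(V,W)$ is represented by a matrix $A$ over $\fO$; unwinding the definition of change of scalars from \S\ref{ss:ext} (which, in coordinates, is just entrywise reduction of matrices, using Proposition~\ref{prop:fg_projective}\ref{prop:projective_reflexive3} to identify $\Hom(V,W)\otimes R$ with $\Hom(V\otimes R,W\otimes R)$), the map $(a\pi_y)\theta^{\fO_y}$ is represented by $A\bmod y$ over $\fO_y$, and $\bar a(\theta^{\RF})$ by $A\bmod\fP$ over $\RF$, where $\bar a$ is the image of $a$ in $M\otimes\RF=M/\fP M$. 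Thus it suffices to understand the single matrix $A$, recalling that $d=\dim(V\otimes K)=\rank_\fO(V)$.

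The heart of the argument is to show that $A$ is $\fO$-equivalent to a ``partial identity'' $\diag(1,\dots,1,0,\dots,0)$ with exactly $r$ ones. Since $a\notin\fP M$ the element $\bar a$ is non-zero, so the constant-rank hypothesis forces $\rank_\RF(A\bmod\fP)=\rank(\bar a(\theta^{\RF}))=r$; hence $A$ has an $r\times r$ minor lying in $\fO\setminus\fP$, i.e.\ a unit, which shows $\rank_K(A)\ge r$. Combined with $\rank_K(A)=\rank(a\theta^K)\le\genrank(\theta^K)=r$ this gives $\rank_K(A)=r$. Now put $A$ into Smith normal form over the \DVR{} $\fO$: there are $\fO$-automorphisms of $V$ and $W$ carrying $a\theta$ to $\diag(d_1,\dots,d_r,0,\dots,0)$ with $d_1\mid\dots\mid d_r$ and all $d_i\neq 0$ (exactly $r$ non-zero elementary divisors, since $\rank_K(A)=r$). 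Reducing mod $\fP$, the number of $d_i$ that are units equals $\rank_\RF(A\bmod\fP)=r$; as the $d_i$ form a divisibility chain, this forces every $d_i$ to be a unit, and the claimed normal form follows. This is the one place where both hypotheses — generic rank $r$ over $K$ and constant rank $r$ over $\RF$ — are genuinely used together, and I expect it to be the main (if not deep) obstacle: the point is precisely that the elementary divisors of $a\theta$ cannot acquire any positive valuation.

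It then only remains to compute. Fix $y\in\fO\setminus\{0\}$ with valuation $n=v(y)$, so that $\card{\fO_y}=q^n=\card{y}^{-1}$. Reducing an $\fO$-equivalence modulo $y$ yields an $\fO_y$-equivalence, so $(a\pi_y)\theta^{\fO_y}$ is $\fO_y$-equivalent to the partial identity on $\fO_y^{\,d}$, whose kernel is free of rank $d-r$ over $\fO_y$; hence $\kersize_\theta(a,y)=\card{\Ker((a\pi_y)\theta^{\fO_y})}=\card{\fO_y}^{\,d-r}=\card{y}^{r-d}$. As $a\in M\setminus\fP M$ and $y\in\fO\setminus\{0\}$ were arbitrary, $\theta$ is faithfully $\kersize$-minimal. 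This mirrors the proof of the global statement \cite[Prop.\ 6.8]{ask}, with the localisation-at-$\fp$ arguments there replaced by the direct Smith normal form computation available over the \DVR{} $\fO$.
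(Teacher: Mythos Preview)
Your proof is correct and follows essentially the same route as the paper: both arguments fix $a\in M\setminus\fP M$, use the constant-rank hypothesis over $\RF$ to force all non-zero elementary divisors of $a\theta$ to be units, and read off $\kersize_\theta(a,y)=\abs y^{r-d}$ from the resulting Smith normal form. Your version is somewhat more explicit (particularly in arguing that $\rank_K(a\theta)=r$ via the two-sided inequality, and in the final kernel computation over $\fO_y$), whereas the paper compresses these steps by invoking unit $i\times i$ minors for all $i\le r$ at once.
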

\begin{proof}
  Let $a \in M\setminus \fP M$ and let $\bar a$ denote the image of $a$ in
  $M \otimes \RF$.
  Since $\theta^{\RF}$ has constant rank $r$,
  for each $i=1,\dotsc,r$, 
  some $i\times i$ minor of $\bar
  a\theta^\RF$ is non-zero in $\RF$.
  In particular, $a\theta$ admits a unit $i\times i$ minor for each $i=
  1,\dotsc,r$ whence the $r$ non-zero elementary divisors of $a\theta$ are all
  equal to $1$.
  In particular, $\kersize_\theta(a,y) = \abs y^{r-\dim(V\otimes K)}$ for each
  $y\in \fO\setminus\{0\}$.
\end{proof}

\section{Knuth duality for module representations}
\label{s:duality}

Let $R$ be a ring.
We will now relate categories of the form $\MREP_\tau(R)$ by suitable functors.

\subsection{The functors \MV, \MW, and \VW}
\label{ss:duals}

Recall the notation from \S\ref{ss:modules}.
Any module representation $M \xto{\theta}\Hom(V,W)$ over $R$ gives rise to
associated module representations $\theta^\MV$, $\theta^\MW$, and $\theta^\VW$
defined as follows:
\begin{alignat*}{6}
  V &\xto{\theta^\MV} && \Hom(M,W), \quad\quad&& x &\,&\mapsto\theta \dtimes x
  \eval V W &&= \bigl(a \mapsto x(a\theta)\bigr), \\
  W^*&\xto{\theta^\MW} && \Hom(V,M^*), \quad\quad&& \psi &&\mapsto
  \theta^{\MV} \dtimes \Hom(M,\psi) &&= \bigl(x \mapsto (a \mapsto (x(a\theta))\psi)\bigr),
  \\
  M &\xto{\theta^{\VW}} && \Hom(W^*,V^*), \quad\quad&& a &&\mapsto (a\theta)^* &&=
  \bigl(\psi \mapsto (x\mapsto (x(a\theta))\psi)\bigr).
\end{alignat*}

These operations generalise Knuth's action on ``cubical arrays''~\cite[\S 4.1]{Knu65b};
see \S\ref{ss:matrices} and cf.~\cite{Lie81}.
The operations $\MV$, $\MW$, and $\VW$ are related:
for $a \in M$, $x \in V$, and $\psi \in W^*$, 
\begin{equation}
  \label{eq:triple}
  (x (a\theta)) \psi = (a (x\theta^\MV))\psi =  a (x (\psi \theta^{\MW})) =
  x (\psi (a \theta^{\VW}));
\end{equation}
moreover, $\theta^\VW = \theta^{\MV\MW\MV}$ 
and $\theta^\MW = \theta^{\MV\VW\MV}$ (Lemma~\ref{lem:conj_dual}).

Given a triple of $R$-module homomorphisms $(\nu,\phi,\psi)$, we define
\[
(\nu,\phi,\psi)^\MV = (\phi,\nu,\psi), \quad\quad
(\nu,\phi,\psi)^\MW = (\psi^*,\phi,\nu^*), \quad\quad
(\nu,\phi,\psi)^\VW = (\nu,\psi^*,\phi^*).
\]

\begin{prop}
  \label{prop:functors}
  Let $\star \in \{\MV,\MW,\VW\}$.
  \begin{enumerate}
  \item \label{prop:functors1}
    The operation $\star$ defines functors $$\MREP(R)
    \xtofrom[(\dtimes\,)^\star]{(\dtimes\,)^\star} \MREP_{\tau_\star}(R)$$
    and
    $$\MREP_{\ddd}(R) \xtofrom[(\dtimes\,)^\star]{(\dtimes\,)^\star} \MREP_{\bar\tau_\star}(R),$$
    where $\tau_\star$ is given in the following table:
    \begin{center}
      \begin{tabular}{c|ccc}
        $\star$ & \MV & \MW & \VW\\
        \hline
        $\tau_\star$ & \uuu & \dud & \udd.
      \end{tabular}
    \end{center}
  \item \label{prop:functors2}
    By (\ref{prop:functors1}), $(\dtimes)^{\star\star}$
    defines an endofunctor of each of $\MREP(R)$, $\MREP_\ddd(R)$,
    $\MREP_{\tau_\star}(R)$, and $\MREP_{\bar\tau_\star}(R)$.
    In each case, we obtain a natural transformation
    $\mathrm{Id} \xto{\eta^\star} (\dtimes)^{\star\star}$ with $\theta$-component 
    $(\nu_\theta,\phi_\theta,\psi_\theta)$
    (where $\theta$ is a module representation over $R$)
    as indicated in the following table:
    \begin{center}
      \begin{tabular}{c|ccc}
        $\star$ &$\nu_\theta$&$\phi_\theta$&$\psi_\theta$ \\
        \hline
        \MV & $\Mod{\theta}$ & $\Dom{\theta}$ & $\Cod{\theta}$ \\
        \MW & $\eval {\Mod{\theta}} R$ & $\Dom{\theta}$ & $\eval {\Cod{\theta}} R$ \\
        \VW & $\Mod{\theta}$ & $\eval {\Dom\theta} R$ & $\eval {\Cod\theta} R$.
      \end{tabular}
    \end{center}
  \end{enumerate}
\end{prop}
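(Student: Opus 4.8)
The plan is to verify Proposition~\ref{prop:functors} by unwinding definitions, working out the three operations $\MV$, $\MW$, and $\VW$ separately, with the middle case typically reducible to the others via the relations $\theta^\VW = \theta^{\MV\MW\MV}$ and $\theta^\MW = \theta^{\MV\VW\MV}$ once those are available (though here they are only promised by Lemma~\ref{lem:conj_dual}, so I would treat each case on its own to avoid circularity). The key observation organising everything is the symmetric identity~\eqref{eq:triple}: the quantity $(x(a\theta))\psi$ is literally computed by each of the four module representations $\theta$, $\theta^\MV$, $\theta^\MW$, $\theta^\VW$ via its associated multiplication, with the three slots $M$, $V$, $W^*$ (or $W$) permuted. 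This identity is what makes the intertwining conditions in Definition~\ref{d:mrep} transport across.

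First I would handle functoriality on objects: for each $\star$, the codomain category $\MREP_{\tau_\star}(R)$ is read off by asking which of the three modules $M$, $V$, $W$ gets dualised and which get swapped. For $\MV$ the slots $M$ and $V$ are interchanged with no dualisation, so $\tau_\MV = \uuu$; for $\VW$ the slots $V$ and $W$ are dualised (sending $\Hom(V,W)$ to $\Hom(W^*,V^*)$) with $M$ untouched, giving $\tau_\VW = \udd$; for $\MW$ both $M$ and $W$ are dualised while $V$ is fixed, giving $\tau_\MW = \dud$. Next, functoriality on morphisms: given a $\uuu$-morphism $(\nu,\phi,\psi)\colon\theta\to\tilde\theta$, I would check that $(\nu,\phi,\psi)^\star$, as defined before the proposition, is a $\tau_\star$-morphism $\theta^\star\to\tilde\theta^\star$. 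Concretely, the homotopy square for $\theta$ (commutativity of $a\theta$, $\phi$, $\psi$, $(a\nu)\tilde\theta$) gets rewritten, using~\eqref{eq:triple}, as the corresponding $\tau_\star$-diagram for the starred representations; for $\VW$, for instance, one checks $\Hom(\psi^*,\phi^*)$-naturality of $\theta^\VW$ against $\tilde\theta^\VW$, i.e.\ that the square
\[
\begin{CD}
  M @>{\theta^\VW}>> \Hom(W^*,V^*)\\
  @V{\nu}VV @VV{\Hom(\psi^*,\phi^*)}V\\
  \tilde M @>{\tilde\theta^\VW}>> \Hom(\tilde W^*,\tilde V^*)
\end{CD}
\]
commutes, which upon evaluation at $a\in M$ and pairing with $\tilde x\in\tilde V$, $\psi\in\tilde W^*$ becomes the original homotopy identity $(x(a\theta))(\chi) = ((x\phi)((a\nu)\tilde\theta))(\chi)$ after substituting appropriately. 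Functoriality (respect for identities and composition) is then immediate from the functoriality of $(\dtimes)^*$ and the fact that the assignment on triples is coordinatewise up to transposition. The case of $\MREP_\ddd(R)\xtofrom{}\MREP_{\bar\tau_\star}(R)$ follows formally by passing to opposite categories, using $\MREP_{\bar\tau}(R) = \MREP_\tau(R)^\opp$ and $\MREP_\ddd(R) = \MREP(R)^\opp$, so that $\star$ on $\MREP_\ddd(R)$ is just $\star^\opp$ on $\MREP(R)$.

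For part~(\ref{prop:functors2}), I would first note that by part~(\ref{prop:functors1}) the table of $\tau_\star$ gives $\tau_\MV\tau_\MV$, $\tau_\MW\tau_\MW$, $\tau_\VW\tau_\VW$ all equal to $\uuu$ (a direct check: $\uuu\mapsto\uuu$, $\dud\mapsto\dud$ since applying $\MW$ twice re-dualises the already-dualised slots, and similarly $\udd\mapsto\udd$), so $(\dtimes)^{\star\star}$ is genuinely an endofunctor of $\MREP(R)$ and likewise of the other three categories. Then for each $\star$ I would exhibit the stated triple $(\nu_\theta,\phi_\theta,\psi_\theta)$ and check two things: that it is a $\uuu$-morphism $\theta\to\theta^{\star\star}$, which reduces to one more application of~\eqref{eq:triple} together with the defining property of the evaluation maps $\eval{(\dtimes)}R$; and that it is natural in $\theta$, i.e.\ compatible with $\uuu$-morphisms, which for the evaluation-map components is exactly the naturality of $\eta\colon\mathrm{Id}\Rightarrow(\dtimes)^{**}$ in $\MOD(R)$ (this is where one does \emph{not} need reflexivity — $\eta^\star$ is a natural transformation, not a natural isomorphism, so no projectivity hypotheses enter). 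For $\MV$ the component is the identity triple $(\Mod\theta,\Dom\theta,\Cod\theta)$ and $\theta^{\MV\MV} = \theta$ on the nose, so $\eta^\MV$ is the identity natural transformation; the content is entirely in $\MW$ and $\VW$.

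The main obstacle I expect is purely bookkeeping: getting the variances right in the morphism assignments $(\nu,\phi,\psi)^\star$ — which coordinates are starred, which are transposed into the other slot of a Hom, and in which direction the arrows of a $\tau_\star$-morphism point — and then matching these against the four explicit intertwining conditions spelled out after Definition~\ref{d:mrep}. There is no conceptual difficulty, but the sign-free analogue of sign errors (a dualisation applied one too many or too few times, or a $\phi$ and $\psi$ swapped) is easy to make; the safeguard at every step is to collapse the diagram chase back to the single scalar identity~\eqref{eq:triple}, which is manifestly symmetric in the three slots and therefore cannot lie.
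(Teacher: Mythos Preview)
Your proposal is correct and follows essentially the same approach as the paper: the paper's proof explicitly says ``This is easy but tedious'' and only spells out one case (that $\eta^\MW_\theta$ is a homotopy $\theta\to\theta^{\MW\MW}$), reducing it to two applications of \eqref{eq:triple}, which is precisely the organising principle you identified. Your roadmap is more detailed than what the paper provides, but the strategy---unwinding definitions case by case and collapsing each verification to the scalar identity~\eqref{eq:triple}---is the same.
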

\begin{proof}
  This is easy but tedious.
  We only spell out a proof that $\eta^\MW_\theta$ is a
  homotopy $\theta\to\theta^{\MW\MW}$ for each $M \xto\theta\Hom(V,W)$ in
  $\MREP(R)$.
  Let $x\in V$, $a\in M$, and $\psi \in W^*$.
  Then
  $\psi ((x \mul\theta a) \eval W R)
  = \psi((x(a\theta))\eval W R)
  = (x(a\theta))\psi$.
  Next, two applications of \eqref{eq:triple} yield
  \begin{align*}
    \psi(x\mul{\theta^{\MW\MW}} (a\eval M R))
     = \psi(x((a\eval M R)\theta^{\MW\MW}))
       \underset{\eqref{eq:triple}}= (x (\psi\theta^\MW))(a\eval M R)
      = a(x(\psi\theta^\MW)) \underset{\eqref{eq:triple}}= (x(a\theta))\psi.
  \end{align*}
  Therefore, $(x\mul\theta a)\eval W R = x \mul{\theta^{\MW\MW}}(a\eval M R)$
  and it follows that $\eta_\theta^\MW$ is a homotopy.
\end{proof}

\begin{rem}
  $\MV$ and $\VW$ correspond to 
  Knuth's~\cite{Knu65b} ``dual'' and ``transpose'' of semifields (see
  e.g.\ \cite[\S 4]{LP14} and cf.\ \S\ref{ss:matrices} below).
  Furthermore, $\MV$ and $\MW$ correspond
  to Wilson's ``swap''~\cite[\S 2]{Wil17} and ``Knuth-Liebler
  shuffle''~\cite[\S 3]{Wil15} operations for bimaps.
\end{rem}

\subsection{Reflexivity and truncated \DVR{}s}
\label{ss:reflexive}

We now seek to find reasonably rich subcategories of $\MREP_\tau(R)$ which turn
the functors \MV,~\MW, and \VW{} into equivalences.
As a first step towards achieving this goal, in this section,
we  consider full subcategories $\cC$ of $\MOD(R)$ which satisfy the following conditions:
\begin{itemize}[leftmargin=3.5em]
\item[\RFL] $\cC$ consists of finitely generated reflexive modules.
\item[\STB] $\cC$ is stable under taking duals in $\MOD(R)$.
\end{itemize}

We note that reflexivity assumptions are frequently imposed in the
study of dualities between module categories; see e.g.\ \cite[\S 23]{AF92}.

\begin{ex}
The full subcategory of $\MOD(R)$
consisting of finitely generated projective modules satisfies \RFL{}--\STB{}
by Proposition~\ref{prop:fg_projective}.
\end{ex}

A ring is \emph{quasi-Frobenius (\QF)} if it is Noetherian
and if it is an injective module over itself;
see e.g.\ \cite[Ch.\ 6]{Lam99}. 
It is known that \QF{} rings are Artinian; see \cite[Thm~15.1]{Lam99}.
\begin{prop}
  If $R$ is a \QF{} ring, then $\MODf(R)$ satisfies \RFL{}--\STB{}.
\end{prop}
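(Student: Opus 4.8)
The plan is to verify the two defining conditions \RFL{} and \STB{} for the full subcategory $\MODf(R)$ of $\MOD(R)$ when $R$ is \QF{}. The condition \STB{} is essentially immediate: if $V$ is finitely generated over a Noetherian ring, then $V^* = \Hom(V,R)$ is a submodule of a finite product of copies of $R$ (choose generators of $V$), hence finitely generated by Noetherianness. Since every \QF{} ring is Noetherian by definition, $\MODf(R)$ is closed under $(\dtimes)^*$, giving \STB{}. So the real content is \RFL{}: every finitely generated $R$-module is reflexive when $R$ is \QF{}.

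For \RFL{}, I would argue as follows. Since $R$ is injective as a module over itself, the functor $(\dtimes)^* = \Hom(\dtimes,R)$ is exact on $\MOD(R)$; in particular it is exact on $\MODf(R)$, and by the \STB{} step it restricts to a (contravariant, exact) endofunctor of $\MODf(R)$. The evaluation map $\eval V R\colon V \to V^{**}$ is a natural transformation from the identity to $(\dtimes)^{**}$. For $V = R$ it is an isomorphism (indeed $R^* \cong R$ canonically, and one checks the double-dual map on $R$ is the identity after this identification), hence also for $V = R^n$ by additivity of both functors. For general finitely generated $V$, pick a finite free presentation $R^m \to R^n \to V \to 0$; applying the exact functor $(\dtimes)^{**}$ and using naturality of $\eval{\dtimes}R$ yields a commutative diagram with exact rows in which the maps over $R^m$ and $R^n$ are isomorphisms. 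The five lemma (or rather the relevant portion of it) then forces $\eval V R$ to be an isomorphism, so $V$ is reflexive. Combined with finite generation this gives \RFL{}.

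The main obstacle I expect is not conceptual but one of ensuring the exactness input is correctly deployed: the reflexivity argument via presentations genuinely needs $(\dtimes)^*$ to be exact (not merely left exact, which it always is), and this is exactly where the injectivity of $R$ over itself — the defining property of \QF{} rings beyond Noetherianness — enters. One must be slightly careful that applying $(\dtimes)^*$ twice to the presentation produces an exact sequence with $V^{**}$ as cokernel, which again uses exactness of $(\dtimes)^*$ at both stages. Everything else (Noetherian hence finitely generated duals; the base case $V = R^n$; the diagram chase) is routine. I would present the proof in two short paragraphs: one disposing of \STB{} and recording exactness of $(\dtimes)^*$, and one carrying out the presentation-and-five-lemma argument for \RFL{}.

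\begin{proof}
Since $R$ is \QF{}, it is Noetherian, so the dual $V^* = \Hom(V,R)$ of a finitely generated $R$-module $V$ is again finitely generated (it embeds into $R^n$ for any choice of $n$ generators of $V$). Hence $(\dtimes)^*$ restricts to a contravariant endofunctor of $\MODf(R)$, and $\MODf(R)$ satisfies \STB{}. Moreover, as $R$ is injective over itself, $(\dtimes)^* = \Hom(\dtimes,R)$ is exact.

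It remains to check that each $V\in\MODf(R)$ is reflexive, i.e.\ that $\eval V R\colon V \to V^{**}$ is an isomorphism. The map $\eval R R$ is an isomorphism, and since $\eval{\dtimes}R$ is a natural transformation from $\mathrm{id}$ to the additive functor $(\dtimes)^{**}$, so is $\eval{R^n}R$ for every $n\ge 0$. Choose a finite free presentation $R^m \to R^n \to V \to 0$. Applying the exact functor $(\dtimes)^{**}$ yields an exact sequence $R^m \to R^n \to V^{**} \to 0$, and naturality of $\eval{\dtimes}R$ gives a commutative diagram with exact rows whose vertical maps over $R^m$ and $R^n$ are isomorphisms and whose third vertical map is $\eval V R$. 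A diagram chase (the relevant part of the five lemma) shows $\eval V R$ is an isomorphism. Thus $\MODf(R)$ satisfies \RFL{}--\STB{}.
\end{proof}
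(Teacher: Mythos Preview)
Your proof is correct and follows essentially the same approach as the paper. The paper disposes of \STB{} the same way you do and then simply cites \cite[Thm~15.11(2)]{Lam99} for the reflexivity of finitely generated modules over a \QF{} ring; your presentation-and-five-lemma argument is exactly the standard proof of that cited result, so you have unpacked what the paper leaves to a reference.
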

\begin{proof}
  Duals of finitely generated modules over a Noetherian ring are finitely generated.
  By \cite[Thm~15.11(2)]{Lam99}, finitely generated modules over \QF{} rings
  are reflexive.
\end{proof}

\begin{defn}
  A \emph{truncated \DVR{}} is a ring which is isomorphic to a ring of the
  form $\fO_n = \fO/\fP^n$ for some \DVR{}~$\fO$ with maximal ideal $\fP$ and
  some $n \ge 0$.
\end{defn}

The following intrinsic characterisation of truncated \DVR{}s is due to Hungerford~\cite{Hun68}.

\begin{thm}
  \label{thm:hungerford}
  Let $R$ be a non-zero principal ideal ring (not necessarily a domain).
  Then the following are equivalent:
  \begin{enumerate}
  \item \label{thm:hungerford1}
    $R$ has a unique prime ideal.
  \item \label{thm:hungerford2}
    $R$ is local and Artinian.
  \item \label{thm:hungerford3}
    $R$ is a truncated \DVR{}.
  \end{enumerate}
\end{thm}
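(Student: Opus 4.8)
The plan is to verify the cycle of implications $(\ref{thm:hungerford3}) \Rightarrow (\ref{thm:hungerford2}) \Rightarrow (\ref{thm:hungerford1}) \Rightarrow (\ref{thm:hungerford3})$, so that the only genuinely substantial step is the last one. For $(\ref{thm:hungerford3}) \Rightarrow (\ref{thm:hungerford2})$: write $R \cong \fO/\fP^n$ with $\fO$ a \DVR{}. If $n = 0$ then $R = 0$, which is excluded by hypothesis, so $n \ge 1$; the ideals of $R$ are exactly the images of $\fP^i$ for $0 \le i \le n$, hence $R$ is local with maximal ideal $\fP/\fP^n$ and is Artinian (indeed it has finite length $n$ as a module over itself). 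For $(\ref{thm:hungerford2}) \Rightarrow (\ref{thm:hungerford1})$: in any Artinian ring every prime ideal is maximal, and a local ring has a unique maximal ideal; combining these, the unique maximal ideal is the unique prime. (One does not even need the principal-ideal hypothesis for these two implications.)

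The work is in $(\ref{thm:hungerford1}) \Rightarrow (\ref{thm:hungerford3})$. So suppose $R \neq 0$ is a principal ideal ring with a unique prime ideal $\fm$. Since $\fm$ is the unique prime, it is both the nilradical and the Jacobson radical of $R$, so $\fm$ is nil; write $\fm = (\pi)$. The first claim is that $R$ is a \emph{local} ring, i.e.\ $R$ has no nontrivial idempotents: if $e^2 = e$ then, passing to $R/\fm$, which is a field, $e$ maps to $0$ or $1$, so $e \in \fm$ or $1 - e \in \fm$; but an idempotent lying in a nil ideal is $0$, so $e \in \{0,1\}$. Next I would show $\pi$ is nilpotent: since $\fm = (\pi)$ is finitely generated and nil, it is nilpotent, say $\fm^n = 0$ with $n$ minimal, so $\pi^n = 0$ while $\pi^{n-1} \neq 0$ (if $R \neq R/\fm$; the case $R$ a field is $n = 1$). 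The crucial structural point is then that \emph{every} ideal of $R$ equals $(\pi^i)$ for some $0 \le i \le n$: given a nonzero ideal $(a)$, write $a = u\pi^i$ where $i$ is chosen maximal with $a \in (\pi^i)$ (possible since $\pi^n = 0$); then $u \notin (\pi) = \fm$, so $u$ is a unit (as $R$ is local), whence $(a) = (\pi^i)$. This shows the ideal lattice of $R$ is the finite chain $R \supsetneq (\pi) \supsetneq \dots \supsetneq (\pi^n) = 0$.

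It remains to realise $R$ as $\fO/\fP^n$ for an honest \DVR{} $\fO$. Here the plan is to build $\fO$ as an inverse limit or, more concretely, to exhibit a \DVR{} surjecting onto $R$: choose a complete \DVR{} $\fO$ together with a surjection $\fO \onto R/\fm$ of its residue field onto the residue field of $R$ (e.g.\ take $\fO = W(R/\fm)$ a Cohen ring, or in the equicharacteristic case a power series ring over $R/\fm$), lift this to a ring homomorphism $\fO \to R$ using completeness and the nilpotence of $\fm$ (successively lifting modulo $\fm^{k}$), check it is surjective by Nakayama, and observe that its kernel is an ideal of $\fO$ contained between $\fP^n$ (since $\pi$ maps to a generator of $\fm$ and $\pi^n \mapsto 0$) and $\fP$, hence is exactly $\fP^n$ because $\pi^{n-1} \neq 0$ in $R$. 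The main obstacle is precisely this last paragraph: producing the \DVR{} $\fO$ and the lift requires the Cohen structure theorem for complete local rings (or an explicit Witt-vector / power-series construction together with a Hensel-type lifting argument), and one must be careful in the mixed-characteristic case where $R/\fm$ may be imperfect. Since this is classical, I would cite Hungerford~\cite{Hun68} for the full argument and only sketch the reduction to the structure-theorem input.
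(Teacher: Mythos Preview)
Your proof is correct and takes essentially the same route as the paper: both dispatch the easy implications by standard commutative algebra (the paper via citations to \cite{Eis95} and the Stacks project, you by short direct arguments) and both reduce the substantive implication $(\ref{thm:hungerford1}) \Rightarrow (\ref{thm:hungerford3})$ to Hungerford~\cite{Hun68}, with your Cohen-structure-theorem sketch being one way into that result. One minor remark: your idempotent paragraph is an unnecessary detour, since ``local'' means \itemph{unique maximal ideal}, not \itemph{no nontrivial idempotents} (the latter is connectedness), and a ring with a unique prime is immediately local because maximal ideals are prime and every nonzero ring has one.
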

\begin{proof}
  The implication (\ref{thm:hungerford1})$\to$(\ref{thm:hungerford2}) follows
  from \cite[Thm~2.14]{Eis95}.
  The maximal ideal of a local Artinian ring is nilpotent by \stacks{00J8}.
  By \stacks{00JA}, such a ring contains a unique prime ideal.
  Thus, (\ref{thm:hungerford2}) implies (\ref{thm:hungerford1}).
  By \cite[\S 2]{Hun68}, a local principal ideal ring with a unique and
  nilpotent prime ideal is a quotient of a local \PID{} which is not a field.
  Since the latter rings are precisely the \DVR{}s (see
  \cite[Thm~11.2]{Mat86}), this proves 
  that (\ref{thm:hungerford1}) and (\ref{thm:hungerford2}) imply
  (\ref{thm:hungerford3}).
  Finally, (\ref{thm:hungerford3}) obviously implies (\ref{thm:hungerford2}).
\end{proof}

\begin{lemma}
  \label{lem:tdvr}
  Let $R$ be a truncated \DVR{}.
  Then:
  \begin{enumerate}
  \item \label{lem:tdvr1}
    $R$ is a \QF{} ring.
  \item \label{lem:tdvr2}
    If $U$ is a finitely generated $R$-module,
    then $U \approx U^*$ (non-canonically).
    \end{enumerate}
\end{lemma}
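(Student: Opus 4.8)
The plan is to reduce both parts to explicit structural facts about truncated \DVR{}s. First I would dispose of (\ref{lem:tdvr1}): by Theorem~\ref{thm:hungerford}, a truncated \DVR{} $R = \fO_n$ is local Artinian, and it is a principal ideal ring, hence Noetherian. It remains to check that $R$ is self-injective. For $n = 0$ we have $R = 0$, which is vacuously \QF{}; for $n \ge 1$, the maximal ideal $\fP R$ is principal and nilpotent, so $R$ is a local Artinian principal ideal ring, and such rings are well known to be Frobenius (indeed, $R$ is a quotient $\fO/\fP^n$ and one checks that $\operatorname{soc}(R) = \fP^{n-1}/\fP^n$ is simple, which for an Artinian local ring is equivalent to self-injectivity). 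Alternatively I would simply cite the standard fact that a local Artinian principal ideal ring is \QF{} (e.g.\ \cite[Ch.\ 6]{Lam99} or \cite[\S 15]{Lam99}), since this is precisely the kind of ring for which the Frobenius property is classical.

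For (\ref{lem:tdvr2}), I would invoke the structure theorem for finitely generated modules over a principal ideal ring: since $R = \fO_n$ is a quotient of the \PID{} $\fO$, any finitely generated $R$-module $U$ is isomorphic to a finite direct sum $\bigoplus_{i} \fO/\fP^{m_i}$ with $0 \le m_i \le n$. Dualising commutes with finite direct sums, so it suffices to treat a single cyclic summand $U_0 = \fO/\fP^m = R/\fP^m R$. Here a direct computation gives $\Hom_R(R/\fP^m R, R) \cong \Ann_R(\fP^m R) = \fP^{n-m}R/\fP^n R \cong \fO/\fP^m$, where the last isomorphism is multiplication by a uniformiser to the power $n-m$. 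Hence $U_0^* \cong U_0$, and reassembling the summands yields $U^* \cong U$. The isomorphism is visibly non-canonical (it depends on the choice of uniformiser and the decomposition), which matches the statement; I would use the symbol $\approx$ from Remark~\ref{rem:mrep}(\ref{rem:mrep2}) loosely here to record "(non-canonically) isomorphic" — or, to be safe, I would phrase the conclusion as $U \cong U^*$ in $\MOD(R)$ to avoid overloading $\approx$, since $\approx$ was reserved for isotopy of module representations.

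I do not expect a serious obstacle; the only mild subtlety is bookkeeping the identification $\Hom_R(\fO/\fP^m,\, \fO/\fP^n) \cong \fP^{n-m}\fO/\fP^n\fO$ correctly (a generator of the cyclic module must map to an element killed by $\fP^m$, and the submodule of such elements in $R = \fO/\fP^n$ is exactly $\fP^{n-m}/\fP^n$, itself cyclic of length $m$). Once that single computation is in place, everything is formal. A slicker alternative for (\ref{lem:tdvr2}), which I would mention, is to deduce it from (\ref{lem:tdvr1}): over any commutative \QF{} ring, $U \mapsto U^*$ is a duality on $\MODf(R)$ preserving length, and over a local Artinian principal ideal ring the length together with the "shape" (the multiset of $m_i$) is a complete isomorphism invariant — but since both invariants are self-dual, one still needs the cyclic computation above, so the direct argument is cleanest.
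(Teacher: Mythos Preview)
Your proposal is correct and follows essentially the same route as the paper: for (\ref{lem:tdvr1}) the paper likewise observes that a non-zero truncated \DVR{} is local Artinian with simple socle and invokes \cite[Thm~15.27]{Lam99}, and for (\ref{lem:tdvr2}) it reduces via the structure theorem to a single cyclic module $\fO_e$, identifies its dual with $\Ann_{\fO_n}(\fP^e/\fP^n) = \fP^{n-e}/\fP^n$, and uses multiplication by $\pi^{n-e}$ to exhibit the isomorphism. Your side remark about the overloading of $\approx$ is well taken, but the paper does in fact use $\approx$ for module isomorphism here as well.
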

\begin{proof}
  \quad
  \begin{enumerate}
  \item
    A non-zero truncated \DVR{} is local and Artinian (by
    Theorem~\ref{thm:hungerford}), and has a simple socle (viz.\ the minimal
    non-zero ideal). 
    By \cite[Thm~15.27]{Lam99}, such a ring is \QF{}.
  \item 
  For $\fa \normal R$, we identify $(R/\fa)^* = \Ann_R(\fa)$
  via $\phi \mapsto (1+\fa)\phi$. 
  Let $R = \fO_n = \fO/\fP^n$, where $\fO$ is a \DVR{} with
  maximal ideal $\fP$.
  Choose $\pi \in \fP\setminus\fP^2$.
  Since taking dual modules commutes with finite direct sums,
  by the structure theory of finitely generated modules over \PID{}s,
  it suffices to consider the case that $U = \fO_e$ for $e \le n$.
  Using the identification
  $U^* = \Ann_{\fO_n}(\fP^e/\fP^n) = \fP^{n-e}/\fP^n$,
  the claim follows since multiplication by $\pi^{n-e}$ induces an isomorphism
  $U \approx \fP^{n-e}/\fP^n$.
  \qedhere
  \end{enumerate}
\end{proof}

\subsection{Equivalences induced by \MV, \MW, and \VW{} and an $\sym_3$-action}
\label{ss:S3}

Let $\cC$ be a full subcategory of $\MOD(R)$ which satisfies \RFL{}--\STB{}
from \S\ref{ss:reflexive}.
For $\tau \in \Types$ (see \S\ref{ss:eight}), let $\MREP_\tau(\cC)$ be
the full subcategory of $\MREP_\tau(R)$ consisting of module representations
$M\to\Hom(V,W)$ with $M$, $V$, and $W$ in $\cC$; write $\MREP(\cC) =
\MREP_\uuu(\cC)$.

\begin{prop}
  \label{prop:equiv}
  Let $\star \in \{\MV,\MW,\VW\}$.
  \begin{enumerate}
  \item \label{prop:equiv0}
    The functors from
    Proposition~\ref{prop:functors}(\ref{prop:functors1})
    restrict to equivalences
    $\MREP(\cC) \xtofrom[(\dtimes\,)^\star]{(\dtimes\,)^\star} \MREP_{\tau_\star}(\cC)$
    and $\MREP_{\ddd}(\cC) \xtofrom[(\dtimes\,)^\star]{(\dtimes\,)^\star} \MREP_{\bar\tau_\star}(\cC)$.
    \textup{(By definition, these are the \emph{Knuth dualities} from the title of \S\ref{s:duality}.)}
  \item \label{prop:equiv1}
    The natural transformations $\eta^\star$ in
    Proposition~\ref{prop:functors}(\ref{prop:functors2}) restrict
    to natural isomorphisms of endofunctors of $\MREP(\cC)$, $\MREP_\ddd(\cC)$,
    $\MREP_{\tau_\star}(\cC)$, and $\MREP_{\bar\tau_\star}(\cC)$.
  \end{enumerate}
\end{prop}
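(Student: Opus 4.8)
The plan is to deduce both parts from Propositions \ref{prop:functors} and \ref{prop:fg_projective} together with the hypotheses \RFL{}--\STB{} on $\cC$, the only genuine work being to check that the relevant functors and natural transformations stay inside the subcategories carved out by $\cC$. First I would observe that for a module representation $M\xto\theta\Hom(V,W)$ with $M,V,W\in\cC$, the three associated representations $\theta^\MV$, $\theta^\MW$, $\theta^\VW$ again have all three ``slot'' modules lying in $\cC$: indeed the slots of $\theta^\MV$ are $V,M,W$ (a permutation of the original ones), while those of $\theta^\MW$ are $W^*,V,M^*$ and those of $\theta^\VW$ are $M,W^*,V^*$ — and $\cC$ is stable under taking duals by \STB{}. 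Hence each $\star\in\{\MV,\MW,\VW\}$ restricts to a functor between the indicated full subcategories, and likewise its opposite does; this gives the restricted functors in (\ref{prop:equiv0}).

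Next I would verify that these restricted functors are quasi-inverse to one another. By Proposition \ref{prop:functors}(\ref{prop:functors2}) we already have, on all of $\MREP(R)$ (resp.\ $\MREP_\ddd(R)$, etc.), a natural transformation $\mathrm{Id}\xto{\eta^\star}(\dtimes)^{\star\star}$ whose $\theta$-component is the triple listed in the table there. The point is that when $\theta$ lies in $\MREP(\cC)$, this triple consists of \emph{isomorphisms}: for $\star=\MV$ the component is the identity triple $(\Mod\theta,\Dom\theta,\Cod\theta)$, trivially an isomorphism; for $\star=\MW$ it is $(\eval{\Mod\theta}R,\,\Dom\theta,\,\eval{\Cod\theta}R)$, and since $\Mod\theta$ and $\Cod\theta$ are reflexive (they lie in $\cC$, which satisfies \RFL{}) the evaluation maps $\eval{\Mod\theta}R$ and $\eval{\Cod\theta}R$ are isomorphisms; for $\star=\VW$ it is $(\Mod\theta,\,\eval{\Dom\theta}R,\,\eval{\Cod\theta}R)$, again an isomorphism by reflexivity of $\Dom\theta$ and $\Cod\theta$. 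Since a morphism in $\MREP_\tau(\cC)$ is a triple of module maps satisfying an intertwining condition, and the evaluation maps are natural, each $\eta^\star_\theta$ is an isomorphism in the appropriate category; this proves (\ref{prop:equiv1}). Finally, because $(\dtimes)^{\star\star}$ is visibly the identity on the level of the underlying module triples up to the canonical identifications furnished by $\eta^\star$ — equivalently, because $\eta^\star$ is a natural isomorphism $\mathrm{Id}\Rightarrow(\dtimes)^{\star\star}$ — the functor $(\dtimes)^\star$ is an equivalence with quasi-inverse $(\dtimes)^\star$ going the other way, which is exactly the content of (\ref{prop:equiv0}). The same argument, applied with $\ddd$ in place of $\uuu$ and $\bar\tau_\star$ in place of $\tau_\star$, handles the second pair of equivalences; alternatively one passes to opposite categories, noting $\MREP_{\bar\tau}(\cC)=\MREP_\tau(\cC)^\opp$.

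The only step requiring care — and the one I would write out in full — is the bookkeeping that identifies the slot modules of $\theta^{\star\star}$ with those of $\theta$ and checks that $\eta^\star_\theta$ really is a $\tau$-morphism for the correct $\tau$; this is where one must be scrupulous about which of the three morphisms in a $\tau$-triple point ``up'' and which point ``down'', and about the placement of duals. None of it is deep: it is the reflexivity hypothesis \RFL{} that does all the work, exactly as in classical Morita-style dualities (cf.\ \cite[\S 23]{AF92}), with \STB{} ensuring we never leave $\cC$. I expect no real obstacle beyond this combinatorial care, since the hard analytic and structural facts (that finitely generated projective modules, and finitely generated modules over \QF{} rings, are reflexive and closed under duals) are already established in \S\ref{ss:modules} and \S\ref{ss:reflexive}.
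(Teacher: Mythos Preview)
Your proposal is correct and follows essentially the same route as the paper: use \STB{} to see that the functors restrict, then use \RFL{} to see that the components of $\eta^\star$ (identities and evaluation maps) are module isomorphisms, hence $\eta^\star$ is a natural isomorphism on the subcategories, which simultaneously yields (\ref{prop:equiv1}) and completes (\ref{prop:equiv0}). The paper's proof is simply a terser version of yours, with the key observation stated as ``a morphism in $\MREP_\tau(\cC)$ is an isomorphism if and only if it is an isomorphism in $\MOD(R)^\tau$''.
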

\begin{proof}
  By \STB{}, we obtain the functors in (\ref{prop:equiv0}) as
  restrictions of those in Proposition~\ref{prop:functors}(\ref{prop:functors1}).
  Clearly, a morphism in $\MREP_\tau(\cC)$ is an isomorphism if and only if
  it is an isomorphism in $\MOD(R)^\tau$. 
  Part~(\ref{prop:equiv1}) now follows from \RFL{} and this completes the
  proof of~(\ref{prop:equiv0}).
\end{proof}

By Remark~\ref{rem:mrep}(\ref{rem:mrep2}), the isomorphism classes in
$\MREP_\tau(R)$ (and similarly also in $\MREP_\tau(\cC)$) do not depend on
$\tau$ in the sense that two module representations over $R$ are isomorphic
in $\MREP_\tau(R)$ for \itemph{some} $\tau \in \Types$ if and only if they are
isomorphic in $\MREP_\tau(R)$ for \itemph{each} $\tau \in \Types$.
By Proposition~\ref{prop:functors}(\ref{prop:functors1}), each $\star\in
\{\MV,\MW,\VW\}$ induces a map between isotopy classes
$\sfrac{\MREP(\cC)}{\!\approx} \xto{\phantom{ok}} \sfrac{\MREP(\cC)}{\!\approx}$.
By Proposition~\ref{prop:equiv}, these maps are involutions.
Their interactions are recorded in the following straightforward lemma.

\begin{lemma}
  \label{lem:conj_dual}
  Let $M \xto\theta\Hom(V,W)$ be a module representation over an arbitrary ring.
  Then:
  \begin{enumerate}
  \item \label{lem:conj_dual1}
    $\theta^{\MV\MW\MV} = \theta^\VW$
    and $(\eval M R, W^*,V^*)$ is a homotopy $\theta^\VW \to
    \theta^{\MW\MV\MW}$.
  \item \label{lem:conj_dual2}
    $\theta^{\MV\VW\MV} = \theta^\MW$
    and 
    $(W^*, \eval V R,M^*)$ is a homotopy $\theta^\MW \to
    \theta^{\VW\MV\VW}$.
  \item \label{lem:conj_dual3}
    $(\eval V R, \eval M R, \eval W R)$ is a homotopy $\theta^\MV \to
    \theta^{\MW\VW\MW}$ and a homotopy
    $\theta^\MV \to \theta^{\VW\MW\VW}$.
    \qed
  \end{enumerate}
\end{lemma}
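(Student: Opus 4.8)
The plan is to deduce all three parts by unwinding the definitions of $\MV$, $\MW$, and $\VW$ from \S\ref{ss:duals}, using the triple-product identity~\eqref{eq:triple} and the defining property of the evaluation maps (that $a\eval M R$ is the functional $f\mapsto f(a)$ on $M^*$, and similarly for $\eval V R$ and $\eval W R$). The one bookkeeping point worth isolating first is how each operation permutes the triple of modules $(\Mod\theta,\Dom\theta,\Cod\theta)$ underlying a module representation $M\to\Hom(V,W)$: the operation $\MV$ produces $(V,M,W)$, the operation $\MW$ produces $(W^*,V,M^*)$, and $\VW$ produces $(M,W^*,V^*)$. Composing three of these, one reads off that $\theta^{\MV\MW\MV}$ and $\theta^\VW$ both have underlying triple $(M,W^*,V^*)$; that $\theta^{\MV\VW\MV}$ and $\theta^\MW$ both have $(W^*,V,M^*)$; that $\theta^{\MW\MV\MW}$ has $(M^{**},W^*,V^*)$; that $\theta^{\VW\MV\VW}$ has $(W^*,V^{**},M^*)$; and that $\theta^{\MW\VW\MW}$ and $\theta^{\VW\MW\VW}$ both have $(V^{**},M^{**},W^{**})$. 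This already matches the sources and targets of the homomorphism triples occurring in the statement, and it explains why a literal equality is on offer only in the first assertions of~(\ref{lem:conj_dual1}) and~(\ref{lem:conj_dual2}), the remaining coincidences being mediated by evaluation maps.

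For the equality $\theta^{\MV\MW\MV}=\theta^\VW$ I would compute the composite directly: $\theta^\MV$ is $x\mapsto(a\mapsto x(a\theta))$, and applying first $\MW$ and then $\MV$, while using the definition of $\MV$ (cf.~\eqref{eq:triple}) to rewrite $a(x\theta^\MV)$ as $x(a\theta)$ throughout, one arrives at $M\to\Hom(W^*,V^*)$, $a\mapsto\bigl(\psi\mapsto(x\mapsto(x(a\theta))\psi)\bigr)$, which is literally $a\mapsto(a\theta)^*=\theta^\VW$. The equality $\theta^{\MV\VW\MV}=\theta^\MW$ then costs nothing more: one checks directly from the definitions that $\theta^{\MV\MV}=\theta$ (this is Proposition~\ref{prop:functors}(\ref{prop:functors2}) for $\star=\MV$), so $\theta^{\MV\VW\MV}=\theta^{\MV(\MV\MW\MV)\MV}=\theta^{\MW}$.

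For the homotopy assertions I would verify the intertwining condition defining a $\uuu$-morphism (homotopy) from \S\ref{ss:eight}, i.e.\ commutativity of the associated square for each element of the source module. For~(\ref{lem:conj_dual1}): the same unravelling yields $\theta^{\MW\MV\MW}\colon\Xi\mapsto\bigl(\psi\mapsto(x\mapsto\Xi(a'\mapsto(x(a'\theta))\psi))\bigr)$ for $\Xi\in M^{**}$, $\psi\in W^*$, $x\in V$; substituting $\Xi=a\eval M R$ (so that $\Xi$ applied to the functional $a'\mapsto(x(a'\theta))\psi$ returns its value at $a$, namely $(x(a\theta))\psi$) collapses this to $\psi\mapsto(x\mapsto(x(a\theta))\psi)=a\theta^\VW$, which is exactly the homotopy condition for the triple $(\eval M R,\mathrm{id}_{W^*},\mathrm{id}_{V^*})$. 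The homotopies in~(\ref{lem:conj_dual2}) and~(\ref{lem:conj_dual3}) are checked identically, the latter invoking the defining properties of $\eval V R$, $\eval M R$, and $\eval W R$ simultaneously. There is no real obstacle: the argument is a pure definition chase, and the only thing requiring care is keeping the placement of the single and double duals straight --- which is exactly why the lemma is ``straightforward''.
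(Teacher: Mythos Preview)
Your proposal is correct and is exactly the kind of routine definition chase the paper has in mind: the paper gives no proof at all, simply labelling the lemma ``straightforward'' and closing it with a \qedsymbol. Your bookkeeping of the underlying module triples, the direct verification of $\theta^{\MV\MW\MV}=\theta^\VW$, the deduction of $\theta^{\MV\VW\MV}=\theta^\MW$ from $\theta^{\MV\MV}=\theta$, and the check of the homotopy condition via the defining property of the evaluation maps are all sound and constitute precisely the omitted details.
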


\paragraph{Action.}
As before, let $\cC$ be a full subcategory of $\MOD(R)$ which satisfies
\RFL{}--\STB{}.
We conclude that by letting the transpositions $(1,2)$, $(1,3)$, and
$(2,3)$ in $\sym_3$ act as $\MV$, $\MW$,
and $\VW$, respectively, we obtain an action of $\sym_3$ on
$\sfrac{\MREP(\cC)}{\!\approx}$.
This action permutes the modules $\Mod\theta$, $\Dom\theta$, and $\Cod\theta$
associated with a module representation $M\xto\theta\Hom(V,W)$ up to taking
duals.
To keep track of the latter, it is convenient to replace $\sym_3$ by the
isomorphic copy
\begin{equation}
  \label{eq:signed_S3}
  \Pi
  := \bigl\langle (1,2), (1,-3), (2,-3) \bigr\rangle
  \le \{ \pm 1 \} \wr \sym_3
\end{equation}
 as a \textit{signed} permutation group;
see Table~\ref{tab:S3} for the resulting $\Pi$-action on $\sfrac{\MREP(\cC)}{\!\approx}$.

\begin{table}[h]
  \centering
  \begin{tabular}{cccccrl}
    $\sigma$& elt of $\Pi$ &$\Mod\sigma$ & $\Dom\sigma$ & $\Cod\sigma$
    & rule for $\sigma$\\
    \hline
    $\theta$ & $\mathrm{id} = (\,)$ & $M$ & $V$ & $W$& $a\theta =$ &$ x \mapsto x (a\theta)$ \\
    $\theta^\MV$ &$(1,2)$& $V$ & $M$ & $W$ & $x\theta^\MV =$ & $a \mapsto x (a\theta)$ \\ 
    $\theta^\MW$ &$(1,-3)$& $W^*$ & $V$ & $M^*$ &  $\psi \theta^\MW=$& $x \mapsto \left(a \mapsto (x (a\theta)) \psi \right)$\\
    $\theta^{\MV\MW}$ & $(1,2,-3)$& $W^*$ & $M$ & $V^*$ &  $\psi \theta^{\circ\bullet} = $& $a \mapsto \left( x \mapsto (x (a\theta))\psi \right)$\\
    $\theta^{\MW\MV}$ &$ (1,-3,2)$& $V$ & $W^*$ & $M^*$ & $x \theta^{\bullet\circ} = $ & $\left(\psi \mapsto \left(a \mapsto (x(a\theta))\psi\right) \right)$                                              
    \\
    $\theta^\VW = \theta^{\MV\MW\MV}\approx\theta^{\MW\MV\MW}$ & $(2,-3)$& $M$ & $W^*$ & $V^*$ &
    $a \theta^\VW = $ & $(a\theta)^*$
  \end{tabular}
  \caption{The $\Pi$-orbit of $M\xto\theta\Hom(V,W)$ in $\sfrac{\MREP(\cC)}{\!\approx}$}
  \label{tab:S3}
\end{table}

\begin{rem}[Knuth duality for projective modules]
  Let $M\xto\theta\Hom(V,W)$ be a module representation over a ring.
  Suppose that $M$, $V$, and $W$ are all finitely generated projective.
  Then $\theta$ corresponds naturally to an element of $\Hom(M,\Hom(V,W))
  \approx M^* \otimes V^* \otimes W$ and the three involutions above
  simply permute the tensor factors (up to isomorphisms coming from
  reflexivity); this is essentially Liebler's coordinate-free
  interpretation~\cite{Lie81} of Knuth's $\sym_3$-action on ``cubical arrays''.
\end{rem}

\subsection{Matrices}
\label{ss:matrices}

\begin{quote}
  \textit{``It is my experience that proofs involving matrices can be shortened
  by 50\% if one throws the matrices out.''}

\quad --- Emil Artin~\cite[p.\ 14]{Art88}
\end{quote}

For module representations involving free modules of finite rank, the
preceding duality functors take simple explicit forms.
Let $\zz = (z_1,\dotsc,z_\ell)$ consist of algebraically independent
indeterminates over $R$.
Identify $\Hom(R^d,R^e) = \Mat_{d\times e}(R)$.
Each $A(\zz) \in \Mat_{d\times e}(R[z_1,\dotsc,z_\ell])$ whose entries are
linear forms gives rise to a module representation
\[
R^\ell \xto{A(\dtimes\,)} \Mat_{d\times e}(R), \quad \ww \mapsto A(\ww).
\]

Conversely, 
let $M\xto\theta\Hom(V,W)$ be a module representation, where
$M$, $V$, and $W$ are free $R$-modules of finite ranks $\ell$,
$d$, and $e$, respectively.
Choose (ordered) bases $\cA = (a_1,\dotsc,a_\ell)$, $\cX = (x_1,\dotsc,x_d)$, and $\cY =
(y_1,\dotsc,y_e)$ of $M$, $V$, and $W$, respectively.
For $1\le h \le \ell$, $1\le i \le d$, and $1\le j \le e$, 
define $c_{hij}(\theta;\cA,\cX,\cY) \in R$ by
\[
x_i (a_h\theta) = \sum_{j=1}^e c_{hij}(\theta;\cA,\cX,\cY) \dtimes y_j.
\]
For $1\le h \le \ell$, let
$M_h(\theta;\cA,\cX,\cY) = [ c_{hij}(\theta;\cA,\cX,\cY) ]_{\substack{1\le i \le d\\1 \le
  j\le e}} \in \Mat_{d\times e}(R)$;
hence, $M_h(\theta;\cA,\cX,\cY)$ is the matrix of $a_h \theta$ with
respect to the bases $\cX$ and $\cY$ of $V$ and $W$, respectively.
Finally,
let
$$M(\theta;\cA,\cX,\cY) := \sum\limits_{h=1}^\ell z_h
M_h(\theta;\cA,\cX,\cY) \in \Mat_{d\times e}(R[\zz]).$$

\begin{lemma}
  Let $R^\ell \xto\nu M$, $R^d \xto\phi V$, and $R^e \xto\psi W$ be the
  isomorphisms corresponding to the bases $\cA$, $\cX$, and $\cY$,
  respectively.
  Then $M(\theta;\cA,\cX,\cY)(\dtimes) \xto{(\nu,\phi,\psi)} \theta$
  is an isotopy. \qed
\end{lemma}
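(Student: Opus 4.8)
The plan is to check directly that the triple $(\nu,\phi,\psi)$ satisfies the intertwining condition defining an $\uuu$-morphism from $A(\dtimes\,)$ to $\theta$ — where I abbreviate $A = M(\theta;\cA,\cX,\cY)$ — and then to observe that $(\nu,\phi,\psi)$ is invertible, so that this homotopy is in fact an isotopy. Spelled out, since $\Mod{A(\dtimes\,)} = R^\ell$ and $\Dom{A(\dtimes\,)} = R^d$, the identity to verify is
\[
(\vv \mul{A(\dtimes\,)} \ww)\,\psi = (\vv\phi) \mul\theta (\ww\nu) \qquad\text{for all }\vv\in R^d,\ \ww\in R^\ell,
\]
where, by definition of the multiplications, $\vv \mul{A(\dtimes\,)} \ww = \vv\bigl(A(\ww)\bigr)$ is the product of the row vector $\vv$ with the matrix $A(\ww) \in \Mat_{d\times e}(R)$. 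Equivalently, in the diagrammatic form of a homotopy, one must show that
\[
\begin{CD}
R^d @>{A(\ww)}>> R^e\\
@V{\phi}VV @VV{\psi}V\\
V @>{(\ww\nu)\theta}>> W
\end{CD}
\]
commutes for every $\ww \in R^\ell$.

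Next I would reduce to basis vectors: both sides of the displayed identity are $R$-bilinear in $(\vv,\ww)$, so it suffices to treat $\vv = \ee_i$ and $\ww = \ee_h$ ranging over the standard bases of $R^d$ and $R^\ell$. Under the coordinate isomorphisms, $\ee_i\phi = x_i$ and $\ee_h\nu = a_h$. On the left, $A(\ee_h) = M_h(\theta;\cA,\cX,\cY)$ since $A = \sum_{h'} z_{h'} M_{h'}(\theta;\cA,\cX,\cY)$, so $\ee_i \mul{A(\dtimes\,)} \ee_h = \ee_i A(\ee_h)$ is the $i$th row of $M_h(\theta;\cA,\cX,\cY)$, namely $\sum_{j=1}^e c_{hij}(\theta;\cA,\cX,\cY)\,\ee_j \in R^e$; applying $\psi$, which sends $\ee_j \mapsto y_j$, yields $\sum_{j=1}^e c_{hij}(\theta;\cA,\cX,\cY)\,y_j$. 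By the defining relation $x_i(a_h\theta) = \sum_{j=1}^e c_{hij}(\theta;\cA,\cX,\cY)\,y_j$, this equals $x_i(a_h\theta) = x_i \mul\theta a_h = (\ee_i\phi)\mul\theta(\ee_h\nu)$, which is the right-hand side. Hence $(\nu,\phi,\psi)$ is a homotopy $A(\dtimes\,)\to\theta$; the content is nothing more than the assertion that $A(\ww)$ represents $(\ww\nu)\theta$ with respect to $\cX$ and $\cY$, which is exactly how the matrices $M_h(\theta;\cA,\cX,\cY)$ were manufactured. Finally, $\nu$, $\phi$, and $\psi$ are isomorphisms of $R$-modules, being the coordinate isomorphisms attached to the chosen bases, so the homotopy just produced is invertible, i.e.\ an isotopy (cf.\ Remark~\ref{rem:mrep}).

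I do not anticipate a genuine obstacle here: the statement unwinds to the elementary dictionary between $R$-linear maps and their matrices. The only point demanding care is the right-action/row-vector bookkeeping — concretely, the fact that ``$M_h(\theta;\cA,\cX,\cY)$ is the matrix of $a_h\theta$ with respect to $\cX$ and $\cY$'' is literally the displayed defining relation for the scalars $c_{hij}(\theta;\cA,\cX,\cY)$, so that the commuting square above holds by construction once both legs are evaluated on basis vectors.
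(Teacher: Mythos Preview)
Your proof is correct. The paper gives no proof at all --- the lemma is stated with an immediate \qedsymbol{} --- and your argument is exactly the routine unwinding of definitions that justifies this: verify the intertwining identity on basis vectors using the defining relation for the $c_{hij}$, then note that the coordinate maps are isomorphisms.
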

\vspace*{0.5em}

Let $\cA^*$, $\cX^*$, and $\cY^*$ denote the dual bases of $\cA$, $\cX$, and
$\cY$, respectively.
Note that
\begin{equation}
  \label{eq:triple_coeff}
  c_{hij}(\theta;\cA,\cX,\cY) = \bigl(x_i(a_h\theta)\bigr)y_j^*
\end{equation}
for all $h,i,j$ as above.
We now describe the effects of $\MV$, $\MW$, $\VW$ in terms of coordinates.

\begin{prop}
  \label{prop:matrix_duality}
  \quad
  \begin{enumerate}
  \item \label{prop:matrix_duality1}
    $c_{hij}(\theta;\cA,\cX,\cY)
    = c_{ihj}(\theta^\MV;\cX,\cA,\cY)
    = c_{jih}(\theta^\MW;\cY^*,\cX,\cA^*)
    = c_{hji}(\theta^\VW;\cA,\cY^*,\cX^*)$.
  \item \label{prop:matrix_duality2}
    $M(\theta^{\VW};\cA,\cY^*,\cX^*) = M(\theta;\cA,\cX,\cY)^\top$.
  \item \label{prop:matrix_duality3}
    $M(\theta^{\MV\MW};\cY^*,\cA,\cX^*) = M(\theta^{\MW};\cY^*,\cX,\cA^*)^\top$.
  \item \label{prop:matrix_duality4}
    $M(\theta^{\MW\MV};\cX,\cY^*,\cA^*) = M(\theta^\MV;\cX,\cA,\cY)^\top$.
  \end{enumerate}
\end{prop}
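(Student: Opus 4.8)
The plan is to prove (i) first, by writing out all four coordinates from their defining equations, using the explicit descriptions of $\theta^\MV$, $\theta^\MW$, and $\theta^\VW$ from \S\ref{ss:duals} together with the triple identity~\eqref{eq:triple}; parts (ii)--(iv) will then follow mechanically by substituting into the definition $M(\theta;\cA,\cX,\cY) = \sum_{h=1}^\ell z_h M_h(\theta;\cA,\cX,\cY)$ and recognising the resulting index permutation as a transposition.

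For (i), recall from~\eqref{eq:triple_coeff} that $c_{hij}(\theta;\cA,\cX,\cY)$ is the value $(x_i(a_h\theta))y_j^*$ of the trilinear map $(x,a,\psi)\mapsto(x(a\theta))\psi$ at $(x_i,a_h,y_j^*)$; I would then read off the other three coordinates in the same way. By definition, $c_{ihj}(\theta^\MV;\cX,\cA,\cY)$ is the $\cY$-coordinate of $a_h(x_i\theta^\MV)\in W$, which equals $(x_i(a_h\theta))y_j^*$ by the formula for $\theta^\MV$. Next, $c_{jih}(\theta^\MW;\cY^*,\cX,\cA^*)$ is the $\cA^*$-coordinate of $x_i(y_j^*\theta^\MW)\in M^*$; since $M$ is reflexive (Proposition~\ref{prop:fg_projective}) the dual basis of $\cA^*$ is $\cA$, so this coordinate is the value at $a_h$, namely $a_h\bigl(x_i(y_j^*\theta^\MW)\bigr)$, which equals $(x_i(a_h\theta))y_j^*$ by~\eqref{eq:triple}. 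Finally $c_{hji}(\theta^\VW;\cA,\cY^*,\cX^*)$ is the $\cX^*$-coordinate of $y_j^*(a_h\theta^\VW)\in V^*$, hence (using $(\cX^*)^*=\cX$) equals $x_i\bigl(y_j^*(a_h\theta^\VW)\bigr)=(x_i(a_h\theta))y_j^*$, again by~\eqref{eq:triple}. Thus all four coordinates coincide, which is~(i).

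Part (ii) is then immediate: $\theta^\VW$ has the same module $M$ with basis $\cA$ (hence the same indeterminates $z_h$), and its domain and codomain $W^*$, $V^*$ carry the bases $\cY^*$, $\cX^*$, so by (i) the $(j,i)$-entry of $M_h(\theta^\VW;\cA,\cY^*,\cX^*)$ is $c_{hji}(\theta^\VW;\cA,\cY^*,\cX^*)=c_{hij}(\theta;\cA,\cX,\cY)$, i.e.\ $M_h(\theta^\VW;\cA,\cY^*,\cX^*)=M_h(\theta;\cA,\cX,\cY)^\top$ for each $h$, and summing against $z_h$ gives~(ii). For (iii) and (iv) I would apply (i) not to $\theta$ but to the module representations $\theta^\MV$ (with module/domain/codomain bases $\cX,\cA,\cY$) and $\theta^\MW$ (with bases $\cY^*,\cX,\cA^*$) — legitimate because (i) holds for any module representation over $R$ whose three modules are free of finite rank, and $\theta^\MV$, $\theta^\MW$ are such. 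Applying the ``$c(\theta^\MW)$-clause'' of (i) to $\theta^\MV$ expresses the coordinates of $\theta^{\MV\MW}=(\theta^\MV)^\MW$ through those of $\theta^\MV$, which the ``$c(\theta^\MV)$-clause'' of (i) for $\theta$ in turn expresses through those of $\theta$; comparing with the ``$c(\theta^\MW)$-clause'' of (i) for $\theta$ shows, by the same bookkeeping as in (ii), that $M_j(\theta^{\MV\MW};\cY^*,\cA,\cX^*)=M_j(\theta^\MW;\cY^*,\cX,\cA^*)^\top$, whence (iii); and (iv) follows symmetrically by applying (i) to $\theta^\MW$ and $\theta^\MV$ to handle $\theta^{\MW\MV}=(\theta^\MW)^\MV$. (Alternatively, one checks directly from the formulas in \S\ref{ss:duals} that, after the identifications $M^{**}=M$ etc., $\theta^{\MV\MW}=(\theta^\MW)^\VW$ and $\theta^{\MW\MV}=(\theta^\MV)^\VW$, so that (iii) and (iv) are just (ii) applied to $\theta^\MW$ and $\theta^\MV$.)

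I expect the only real difficulty to be the combinatorial bookkeeping: after each of $\MV$, $\MW$, $\VW$, one must correctly identify which of the indices $h,i,j$ now labels the ``module'' (the index carrying the indeterminates $z$) versus the row (domain) and column (codomain), and one must apply the reflexivity identifications $(\cA^*)^*=\cA$, $(\cX^*)^*=\cX$, $(\cY^*)^*=\cY$ consistently. Conceptually nothing is needed beyond~\eqref{eq:triple_coeff} and the triple identity~\eqref{eq:triple}.
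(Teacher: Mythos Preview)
Your proposal is correct and follows essentially the same approach as the paper: the paper only spells out the $\theta^\MW$-case of~(i) via~\eqref{eq:triple_coeff} and~\eqref{eq:triple} and declares the remaining statements ``equally simple'', while you supply the details for all four equalities in~(i) and explain how (ii)--(iv) follow by index bookkeeping. Your parenthetical alternative for (iii)--(iv), reducing them to (ii) applied to $\theta^\MW$ and $\theta^\MV$, is also fine and amounts to the same computation in different packaging.
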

\begin{proof}
  We only show that $c_{jih}(\theta^\MW;\cY^*,\cX,\cA^*) =
  c_{hij}(\theta;\cA,\cX,\cY)$ and leave the remaining (equally simple) statements to the
  reader.
  By combining \eqref{eq:triple_coeff} and \eqref{eq:triple}, we see that
  \[
  c_{jih}(\theta^\MW;\cY^*,\cX,\cA^*) = 
  \bigl(x_i (y_j^* \theta^\MW)\bigr)a_h^{**}
  = a_h \bigl(x_i (y_j^* \theta^\MW)\bigr)
  = (x_i (a_h\theta)) y_j^* = c_{hij}(\theta;\cA,\cX,\cY). \qedhere
  \]
\end{proof}

Hence, for finitely generated free modules $M$, $V$, and $W$,
the $\sym_3$-action on the orbit of a module representation $M \to \Hom(V,W)$
permutes the indices $(h,i,j)$ from above. 
It is this very concrete form of the duality from \S\ref{ss:S3} which, to the 
author's knowledge, was introduced by Knuth~\cite{Knu65b} in the context of
semifields defined by structure constants.

\begin{rem}[Commutator matrices]
  \label{rem:comm_mat}
  Let $\fg$ be a Lie algebra over~$R$.
  Let $\fz$ and $[\fg,\fg]$ denote the centre and derived subalgebra of $\fg$,
  respectively.
  Suppose that $\fg/\fz$ and $[\fg,\fg]$ are both free $R$-modules with finite
  bases $\cA$ and $\cY$, say.
  Let $\fg/\fz \xto{\theta} \Hom(\fg/\fz,[\fg,\fg])$ be the module representation 
  whose associated multiplication $\fg/\fz \times \fg/\fz \xto{\mul\theta}
  [\fg,\fg]$ is induced by the Lie bracket of $\fg$.
  O'Brien and Voll~\cite[Def.\ 2.1]{O'BV15} attached two commutator matrices 
  $A(\mathbf X)$ and $B(\mathbf Y)$ to $\fg$ and the chosen bases.
  These matrices coincide with our $M(\theta;\cA,\cA,\cY)$ and
  $M(\theta^\MW;\cY^*,\cA,\cA^*)$, respectively.
  Note that, by anti-commutativity of Lie brackets,
  $M(\theta^\MV;\cA,\cA,\cY) = -M(\theta;\cA,\cA,\cY)$.
\end{rem}

\subsection{Duals of collapsed sums}
\label{ss:collapsed}

In the following, for $R$-modules $U$ and $V$, we identify
$(U \oplus V)^* = U^* \oplus V^*$
and thus $(U^*)^m = (U^m)^*$.
Consequently, for module representations $\theta$ and $\tilde\theta$ over $R$
and $\star\in\{\MV,\MW,\VW\}$,
we may identify $(\theta\oplus\tilde\theta)^\star = \theta^\star \oplus\tilde\theta^\star$.
Moreover, we will now see that collapsing direct sums as in \S\ref{ss:addition} is 
compatible with the duality operations from
\S\ref{s:duality} in the expected way: the collapsed module is moved
according to the $\sym_3$-action from \S\ref{ss:S3}.
We only spell out the case $\star=\MW$, the others being similar.

\begin{prop}
  \label{prop:coll_MW}
  Let $M \xto\theta \Hom(V,W)$ and $\tilde M\xto{\tilde\theta}\Hom(\tilde V,\tilde
  W)$ be module representations over $R$.
  \begin{enumerate}
  \item
    \label{prop:coll_MW1}
    If $M = \tilde M$, then
    $(\Delta_M \dtimes (\theta\oplus\tilde\theta))^{\MW} = (\theta^\MW \oplus
    \tilde\theta^\MW) \dtimes \Hom(V\oplus\tilde V,\Sigma_{M^*})$.
  \item
    \label{prop:coll_MW2}
    If $W = \tilde W$, then
    $((\theta\oplus\tilde\theta) \dtimes \Hom(V\oplus\tilde V,\Sigma_W))^\MW = \Delta_{W^*}
    \dtimes (\theta^{\MW} \oplus \tilde\theta^\MW)$.
  \item
    \label{prop:coll_MW3}
    If $V = \tilde V$, then
    $((\theta\oplus\tilde\theta) \dtimes\Hom(\Delta_V,W\oplus\tilde W))^\MW
    = (\theta^\MW \oplus\tilde\theta^\MW) \dtimes \Hom(\Delta_V,M^*
    \oplus\tilde M^*)$.
  \end{enumerate}
\end{prop}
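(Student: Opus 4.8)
The plan is to verify each of the three identities by unwinding the definitions of the collapsed sums from \S\ref{ss:addition} and of the functor $(\dtimes)^\MW$ from \S\ref{ss:duals}, and then checking equality of the resulting module representations on elements. Since $(\dtimes)^\MW$ is a (contravariant) functor on $\MREP(R)$ and since we are allowed to identify $(U\oplus \tilde U)^* = U^* \oplus \tilde U^*$, the only genuine content is that $\MW$ turns the three structure maps $\Delta_M$, $\Hom(V\oplus\tilde V,\Sigma_W)$, and $\Hom(\Delta_V, W\oplus \tilde W)$ into the correspondingly dualised structure maps, in accordance with Table~\ref{tab:S3}: under $\MW$ the module $\Mod\theta$ becomes $\Cod{\theta^\MW} = \Mod\theta^*$ and $\Cod\theta$ becomes $\Mod{\theta^\MW} = \Cod\theta^*$, while $\Dom\theta$ is unchanged. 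Thus a collapse of the $M$-slot on one side should become, after applying $\MW$, a collapse of the $\Cod{}$-slot, and vice versa, while a collapse of the $\Dom{}$-slot stays a collapse of the $\Dom{}$-slot; this is exactly what the three assertions say.

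For \ref{prop:coll_MW1}, I would start from the defining formula $\psi\theta^\MW = \bigl(x\mapsto (a\mapsto (x(a\theta))\psi)\bigr)$ and the analogous one for $\tilde\theta^\MW$, and compute both sides of the claimed identity applied to a functional $\psi \in W^* = \tilde W{}^*$... wait—here $M = \tilde M$ but $W$ and $\tilde W$ may differ, so the codomain of $\Delta_M(\theta\oplus\tilde\theta)$ is $\Hom(V\oplus\tilde V, W\oplus\tilde W)$ and hence the $\MW$-dual is indexed by $(W\oplus\tilde W)^* = W^*\oplus\tilde W{}^*$. Applying the definition of $\MW$ to $\Delta_M(\theta\oplus\tilde\theta)$ at a pair $(\psi,\tilde\psi)$ and evaluating at $x\in V$, $\tilde x\in\tilde V$ and then at $(a,a)\in \Delta_M(M)$ via the triple identity \eqref{eq:triple}, one reads off $\bigl((x(a\theta))\psi + (\tilde x(a\tilde\theta))\tilde\psi\bigr)$, which is precisely the value of $(\theta^\MW\oplus\tilde\theta^\MW)\dtimes\Hom(V\oplus\tilde V,\Sigma_{M^*})$ at the same data since $\Sigma_{M^*}$ adds the two $M^*$-components. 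Parts \ref{prop:coll_MW2} and \ref{prop:coll_MW3} are handled the same way: for \ref{prop:coll_MW2} the precomposition by $\Hom(V\oplus\tilde V,\Sigma_W)$ on the $\theta\oplus\tilde\theta$ side corresponds, after applying $\MW$ and using that $\MW$ sends $\Cod{}$-maps to $\Mod{}$-maps dually, to precomposition by $\Delta_{W^*}$; for \ref{prop:coll_MW3} the $\Dom{}$-slot carries $\Hom(\Delta_V,-)$ through $\MW$ essentially unchanged because $\MW$ fixes the $\Dom{}$-slot, producing $\Hom(\Delta_V, M^*\oplus\tilde M^*)$ on the nose.

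Concretely, the cleanest organisation is: (i) record the identification $(\theta\oplus\tilde\theta)^\MW = \theta^\MW\oplus\tilde\theta^\MW$ noted at the start of \S\ref{ss:collapsed}; (ii) express each collapsed sum as a composite in $\MREP(R)$, namely $\Delta_M\dtimes(\theta\oplus\tilde\theta)$ as a $\tau$-morphism pre/post-composed with $\theta\oplus\tilde\theta$, and similarly for the other two; (iii) apply functoriality of $(\dtimes)^\MW$ from Proposition~\ref{prop:functors}, which turns the composite into the composite of the duals in reverse; (iv) identify the dual of the structure map $\Delta_M$ (resp.\ $\Hom(V\oplus\tilde V,\Sigma_W)$, resp.\ $\Hom(\Delta_V,W\oplus\tilde W)$) under $\MW$ with $\Hom(V\oplus\tilde V,\Sigma_{M^*})$ (resp.\ $\Delta_{W^*}$, resp.\ $\Hom(\Delta_V,M^*\oplus\tilde M^*)$), using the definition of $(\nu,\phi,\psi)^\MW = (\psi^*,\phi,\nu^*)$ together with $\Delta_U^* = \Sigma_{U^*}$ and $\Sigma_U^* = \Delta_{U^*}$ under the standard identification $(U^m)^* = (U^*)^m$. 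The only mildly delicate point is keeping straight which of $\nu,\psi$ gets dualised and on which side of the composite it lands—i.e.\ that the "$M$-slot" collapse and the "$W$-slot" collapse genuinely swap under $\MW$ while the "$V$-slot" collapse is fixed—so I expect step (iv), the bookkeeping of duals of the diagonal and summation maps under the explicit action $(\nu,\phi,\psi)\mapsto(\psi^*,\phi,\nu^*)$ on triples, to be the main (though still routine) obstacle. As stated in the excerpt, the remaining cases $\star\in\{\MV,\VW\}$ are entirely analogous and can be omitted or left to the reader.
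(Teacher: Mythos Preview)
Your proposal is correct and follows essentially the same route as the paper. The paper isolates the key functoriality step as a separate lemma (Lemma~\ref{lem:bullet_func}), which establishes that for arbitrary maps $\tilde M\xto{\tilde\nu}M$ and $W\xto{\psi}\tilde W$ one has $(\theta\dtimes\Hom(V,\psi))^\MW\dtimes\Hom(V,\tilde\nu^*)=\psi^*\dtimes(\tilde\nu\dtimes\theta)^\MW$, proved precisely via the $\dud$-functoriality of $(\dtimes)^\MW$ from Proposition~\ref{prop:functors}; parts~(\ref{prop:coll_MW1})--(\ref{prop:coll_MW2}) then follow by specialising $\tilde\nu=\Delta_M$ or $\psi=\Sigma_W$ and invoking the identities $\Delta_U^*=\Sigma_{U^*}$ and $\Sigma_U^*=\Delta_{U^*}$, exactly as in your step~(iv). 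One small point of phrasing: in your step~(ii), the collapsed sums are not composites of morphisms \itemph{in} $\MREP(R)$ but rather composites of module maps; the link to categorical functoriality is made, as in the paper's proof of Lemma~\ref{lem:bullet_func}, by observing that a triple such as $(\Delta_M,\mathrm{id},\mathrm{id})$ is a homotopy between the collapsed and uncollapsed module representations, to which $(\dtimes)^\MW$ may then be applied.
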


Part~(\ref{prop:coll_MW3}) of Proposition~\ref{prop:coll_MW} simply
reflects the fact that the element $(1,-3) \in \Pi$ (see \eqref{eq:signed_S3})
corresponding to $\MW$ fixes the point $2$.
Parts (\ref{prop:coll_MW1})--(\ref{prop:coll_MW2}) follow from the next lemma
(applied in the special case that $\psi = \mathrm{id}_W$ or $\tilde\nu =
\mathrm{id}_M$, respectively) and the two identities $\Sigma_U^* =
\Delta_{U^*}$ and $\Delta_U^* = \Sigma_{U^*}$ which are valid for any
$R$-module~$U$.

\begin{lemma}
  \label{lem:bullet_func}
    Let $M \xto\theta\Hom(V,W)$ be a module representation over $R$.
    Let $\tilde M \xto{\tilde\nu} M$ and $W \xto{\psi} \tilde W$ be
    module homomorphisms.
    Then
    \[
    (\theta \dtimes \Hom(V,\psi))^\MW \dtimes \Hom(V,\tilde\nu^*)
    = \psi^* \dtimes (\tilde\nu\dtimes \theta)^\MW.
    \]
\end{lemma}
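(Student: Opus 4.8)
The plan is to prove the identity by evaluation, which is legitimate since each side is an $R$-homomorphism $\tilde W^*\to\Hom(V,\tilde M^*)$ and such a map is determined by its values on triples $(\xi,x,\tilde a)$ with $\xi\in\tilde W^*$, $x\in V$, $\tilde a\in\tilde M$ (no reflexivity is needed, as elements of $\tilde M^*$ and of the relevant $\Hom$-modules are literally functions). First I would record the two auxiliary module representations that occur: writing $\theta' := \theta\dtimes\Hom(V,\psi)$, unravelling the composition gives $x\mul{\theta'}a = (x\mul\theta a)\psi$ for all $a\in M$, $x\in V$; and writing $\theta'' := \tilde\nu\dtimes\theta$, we get $x\mul{\theta''}\tilde a = x\mul\theta(\tilde a\tilde\nu)$ for all $\tilde a\in\tilde M$, $x\in V$.

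Next I would compute the left-hand side. By the defining formula for $(\dtimes)^\MW$ in \S\ref{ss:duals}, the image of $\xi$ under $(\theta')^\MW$ is the element of $\Hom(V,M^*)$ sending $x$ to the functional $a\mapsto\bigl((x\mul\theta a)\psi\bigr)\xi$ on $M$. Post-composing with $\Hom(V,\tilde\nu^*)$ pulls this functional back along $\tilde\nu$, so the value of the left-hand side at $(\xi,x,\tilde a)$ is $\bigl((x\mul\theta(\tilde a\tilde\nu))\psi\bigr)\xi$. For the right-hand side, $\psi^* = \Hom(\psi,R)$ sends $\xi$ to $\psi\dtimes\xi\in W^*$, the functional $w\mapsto(w\psi)\xi$; applying the defining formula for $(\dtimes)^\MW$ to $\theta''$ shows that the image of $\psi\dtimes\xi$ under $(\theta'')^\MW$ has value $\bigl(x\mul\theta(\tilde a\tilde\nu)\bigr)(\psi\dtimes\xi) = \bigl((x\mul\theta(\tilde a\tilde\nu))\psi\bigr)\xi$ at $(x,\tilde a)$. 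The two values coincide, which proves the lemma.

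I do not expect any genuine difficulty here; the only points requiring care are the convention that maps act on the right and the bookkeeping of the nested applications of $\Hom(V,\dtimes)$ and $(\dtimes)^*$ --- in particular that $\tilde\nu^*$ and $\psi^*$ act by precomposition with $\tilde\nu$ and with $\psi$ respectively. If a coordinate-free formulation is preferred, one can instead use the natural identification $\Hom(V,U^*) = (V\otimes U)^*$: under it, $(\theta')^\MW$ and $(\theta'')^\MW$ are the $R$-linear duals of the multiplication maps $V\otimes M\to\tilde W$ and $V\otimes\tilde M\to W$, while $\Hom(V,\tilde\nu^*)$ becomes precomposition with $\mathrm{id}_V\otimes\tilde\nu$ and $\psi^*$ remains precomposition with $\psi$; by contravariance of $(\dtimes)^*$, both sides then unwind to the dual of the single bilinear map $V\times\tilde M\to\tilde W$, $(x,\tilde a)\mapsto(x\mul\theta(\tilde a\tilde\nu))\psi$.
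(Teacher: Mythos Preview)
Your proof is correct: the direct evaluation on triples $(\xi,x,\tilde a)$ works exactly as you describe, and the bookkeeping is accurate.

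The paper takes a different, more structural route. Rather than unfolding the definition of $(\dtimes)^\MW$, it observes that $(\tilde\nu,\mathrm{id}_V,\psi)$ is a homotopy from $\theta_1:=\tilde\nu\dtimes\theta$ to $\theta_2:=\theta\dtimes\Hom(V,\psi)$, and then invokes the functoriality of $(\dtimes)^\MW$ established in Proposition~\ref{prop:functors}(\ref{prop:functors1}): applying the functor sends this homotopy to the $\dud$-morphism $(\psi^*,\mathrm{id}_V,\tilde\nu^*)$ from $\theta_1^\MW$ to $\theta_2^\MW$, and the definition of a $\dud$-morphism is precisely the commutative square
\[
\begin{CD}
\tilde W^* @>{\theta_2^\MW}>> \Hom(V,M^*)\\
@V{\psi^*}VV @VV{\Hom(V,\tilde\nu^*)}V\\
W^* @>{\theta_1^\MW}>> \Hom(V,\tilde M^*),
\end{CD}
\]
which is the identity to be proved. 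What the paper's argument buys is an explanation of \emph{why} the identity holds---it is an instance of functoriality---and it avoids re-doing a computation already absorbed into Proposition~\ref{prop:functors}. What your argument buys is self-containment: it makes no appeal to the $\MREP_\tau$ machinery and would stand on its own even without \S\ref{ss:eight}. Your final tensor-hom remark is in the same conceptual spirit as the paper's proof, just phrased via a different adjunction.
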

\begin{proof}
  First, let $M_i \xto{\theta_i} \Hom(V,W_i)$ ($i=1,2$) be module
  representations over $R$ and let $\theta_1 \xto{(\tilde\nu,V,\psi)} \theta_2$  be a homotopy.
  Then the following diagram commutes:
  \[
  \begin{CD}
    W_2^* @>{\theta_2^{\MW}}>> \Hom(V,M_2^*)\\
    @V{\psi^*}VV @VV{\Hom(V,\tilde\nu^*)}V\\
    W_1^* @>{\theta_1^\MW}>> \Hom(V,M_1^*);
  \end{CD}
  \]
  indeed, this is equivalent to $(\tilde\nu,V,\psi)^\MW$
  being a $\dud$-morphism $\theta_1^\MW \to \theta_2^\MW$ and it is
  thus a consequence of Proposition~\ref{prop:functors}(\ref{prop:functors1}).
  The claim follows by defining $\theta_1$ to be the composite
  $\tilde M \xto{\tilde\nu} M \xto\theta\Hom(V,W)$ and $\theta_2$ to be
  $M \xto\theta\Hom(V,W)\xto{\Hom(V,\psi)}\Hom(V,\tilde W)$.
\end{proof}

\begin{cor}
  \label{cor:bullet_func}
  Let the notation be as in Lemma~\ref{lem:bullet_func}.
  Then
  $$(\tilde\nu \dtimes \theta \dtimes \Hom(V,\psi))^{\MW} =
 \psi^* \dtimes \theta^{\MW} \dtimes  \Hom(V, \tilde\nu^*).$$
\end{cor}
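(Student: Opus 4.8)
The plan is to obtain Corollary~\ref{cor:bullet_func} from Lemma~\ref{lem:bullet_func} by applying the latter twice, each time with one of its two homomorphisms specialised to an identity map. Throughout, the key structural observation is that $\MW$ fixes the domain (the ``$V$''\nobreakspace slot), so every module representation that appears in the argument has domain $V$; this is what makes it legitimate to compose $\MW$-duals on the source side with maps $\psi^*$ coming from maps into $W$, and on the target side with maps of the form $\Hom(V,-)$.

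First I would apply Lemma~\ref{lem:bullet_func} to the module representation $\tilde\nu\dtimes\theta\colon\tilde M\to\Hom(V,W)$, taking the role of its ``$\tilde\nu$'' to be $\mathrm{id}_{\tilde M}\colon\tilde M\to\tilde M$ while keeping $\psi$. Since $\mathrm{id}_{\tilde M}^{\,*}=\mathrm{id}_{\tilde M^*}$ and $\mathrm{id}_{\tilde M}\dtimes(\tilde\nu\dtimes\theta)=\tilde\nu\dtimes\theta$, the conclusion of the lemma collapses to
\[
(\tilde\nu\dtimes\theta\dtimes\Hom(V,\psi))^{\MW}=\psi^*\dtimes(\tilde\nu\dtimes\theta)^{\MW}.
\]
Secondly I would apply Lemma~\ref{lem:bullet_func} to $\theta$ itself, keeping $\tilde\nu$ but taking its ``$\psi$'' to be $\mathrm{id}_W\colon W\to W$; since $\mathrm{id}_W^{\,*}=\mathrm{id}_{W^*}$ and $\Hom(V,\mathrm{id}_W)=\mathrm{id}_{\Hom(V,W)}$, the conclusion collapses to
\[
\theta^{\MW}\dtimes\Hom(V,\tilde\nu^*)=(\tilde\nu\dtimes\theta)^{\MW}.
\]
Substituting the latter identity into the right-hand side of the former yields exactly the asserted equality.

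I do not expect a genuine obstacle here: all of the substance is already contained in Lemma~\ref{lem:bullet_func}, whose proof in turn reduces the matter to the functoriality of $\MW$ on $\dud$-morphisms recorded in Proposition~\ref{prop:functors}(\ref{prop:functors1}). The only point requiring a moment's care is to verify that the two specialisations are genuinely instances of the lemma \itemph{as stated} rather than of some symmetric variant --- in the first application the ambient module representation is $\tilde\nu\dtimes\theta$, not $\theta$, and one takes the identity on \itemph{its} source module; in the second one simply sets the codomain map equal to $\mathrm{id}_W$ --- but in both cases this is immediate once one tracks the domains and codomains of the maps involved.
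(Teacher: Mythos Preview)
Your proof is correct and is exactly what the paper does: its proof reads in full ``Apply Lemma~\ref{lem:bullet_func} twice.'' You have simply spelled out the two specialisations (identity on the source module in the first application, $\mathrm{id}_W$ in the second) that the paper leaves implicit.
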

\begin{proof}
  Apply Lemma~\ref{lem:bullet_func} twice.
\end{proof}

\section{Effects of Knuth duality on average sizes of kernels}
\label{s:S3_effects}

In this section, we explore the effects of the $\sym_3$-action from
\S\ref{ss:S3} on average sizes of kernels and \Ask{} zeta functions
associated with module representations.

\subsection{Module bundles}
\label{ss:bundles}

Let $R$ be a ring.
An \emph{$R$-module bundle} (frequently referred to as a
\emph{bundle} below) over a base set $Y$ consists of a set $X$ and
a partition $X = \coprod_{y\in Y} X_y$ into 
$R$-modules $X_y$.
A \emph{morphism} $X\xto\chi\tilde X$ of bundles over $Y$ is a map which
induces module homomorphisms $X_y \to \tilde X_y$ for $y\in Y$;
we let $\approx_Y$ signify the existence of an isomorphism of
bundles over~$Y$.
The \emph{dual} of a bundle $X$ over $Y$ is $X^* = \coprod_{y\in Y} X_y^*$.

Clearly, the category of $R$-module bundles over $Y$ is equivalent to the
product category $\MOD(R)^Y$.
Our justification for introducing new terminology is that a single set can
occasionally be regarded as a bundle over different base sets.
For instance, given $R$-modules $U$ and $V$, we may naturally identify $U\times V =
\coprod_U V = \coprod_V U$ and thus regard $U\times V$ as a bundle over $U$ and over $V$.

Let $M \xto\theta\Hom(V,W)$ be a module representation over $R$.
We may regard $\theta$ as a morphism $V\times M\xto{\theta} W\times M,\,
(x,a)\mapsto (x(a\theta),a)$ of bundles over $M$.
The \emph{kernel} and \emph{cokernel} of this morphism are given by
\begin{align*}
  \KER(\theta) := \coprod_{a\in M} \Ker(a\theta) \text{\quad and\quad} 
  \COKER(\theta) := \coprod_{a\in M} \Coker(a\theta),
\end{align*}
respectively.
Define $\comm(\theta) = \{ (x,a)\in V\times M : x \mul\theta a = 0\}$.
We identify $\comm(\theta) = \KER(\theta)$,
where we regard $\comm(\theta)$ as a bundle over $M$ via the projection
$\comm(\theta)\to M$.
Using the projection $\comm(\theta)\to V$, 
we then also regard $\KER(\theta)$ as a bundle over~$V$.

\begin{prop}
  \label{prop:mrep_bundle}
  There are canonical isomorphisms of $R$-module bundles:
  
  \begin{enumerate}
  \item \label{prop:mrep_bundle1}
    $\KER(\theta^\MV) \approx_V \KER(\theta)$.
  \item \label{prop:mrep_bundle2}
    $\KER(\theta^\VW) \approx_M \COKER(\theta)^*$.
  \item \label{prop:mrep_bundle3}
    $\KER(\theta^\MW) \approx_V \COKER(\theta^\MV)^*$.
  \end{enumerate}
\end{prop}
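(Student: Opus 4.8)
The plan is to reduce all three assertions, after unwinding the definitions of the duals $\theta^\MV,\theta^\MW,\theta^\VW$ and of $\KER,\COKER$, to one elementary fact: for every $R$-module homomorphism $f\colon V\to W$, dualising the canonical presentation $V\xto{f}W\onto\Coker(f)\to 0$ and invoking left-exactness of $\Hom(-,R)$ yields a canonical isomorphism
\[
\Coker(f)^*\;\xrightarrow{\ \sim\ }\;\Ker(f^*)=\{\psi\in W^*:\psi|_{\Img(f)}=0\}\subseteq W^*,
\]
namely precomposition with the projection $W\onto\Coker(f)$; no hypothesis on $R$ or on the modules is needed. Parts~(\ref{prop:mrep_bundle2}) and (\ref{prop:mrep_bundle3}) will be fibrewise applications of this fact, while part~(\ref{prop:mrep_bundle1}) is a bare identification of underlying bundles.

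For (\ref{prop:mrep_bundle1}) I would just compute. Reading $\theta^\MV$ off Table~\ref{tab:S3} (equivalently, using $\xi\mul{\theta^\MV}\alpha=\alpha\mul\theta\xi$ for $\xi\in M=\Dom{\theta^\MV}$ and $\alpha\in V=\Mod{\theta^\MV}$), one finds $\comm(\theta^\MV)=\{(\xi,\alpha)\in M\times V:\alpha\mul\theta\xi=0\}$, so $\KER(\theta^\MV)$, as a bundle over $\Mod{\theta^\MV}=V$, has fibre $\{\xi\in M:\alpha\mul\theta\xi=0\}$ over $\alpha$. This is exactly the fibre over $\alpha$ of $\comm(\theta)=\KER(\theta)$ viewed as a bundle over $V=\Dom\theta$, and the coordinate swap $(\xi,\alpha)\mapsto(\alpha,\xi)$ is the required canonical isomorphism of bundles over $V$.

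For (\ref{prop:mrep_bundle2}): since $\theta^\VW\colon M\to\Hom(W^*,V^*)$ sends $a\mapsto(a\theta)^*$, the bundle $\KER(\theta^\VW)$ over $\Mod{\theta^\VW}=M$ has fibre $\Ker\bigl((a\theta)^*\bigr)$ over $a$. The fact above (with $f=a\theta$) identifies this canonically with $\Coker(a\theta)^*$ via precomposition with $W\onto\Coker(a\theta)$, and as $a$ ranges over $M$ these projections constitute the bundle epimorphism $W\times M\onto\COKER(\theta)$; hence the fibrewise isomorphisms assemble into an isomorphism $\COKER(\theta)^*\xrightarrow{\ \sim\ }\KER(\theta^\VW)$ of bundles over $M$. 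Part~(\ref{prop:mrep_bundle3}) then follows formally: by (\ref{prop:mrep_bundle1}) applied to $\theta^\MW$ (whose domain is $V$) we get $\KER(\theta^\MW)\approx_V\KER\bigl((\theta^\MW)^\MV\bigr)$; by Lemma~\ref{lem:conj_dual}(\ref{lem:conj_dual2}) together with $(-)^{\MV\MV}=\mathrm{id}$ (Proposition~\ref{prop:functors}(\ref{prop:functors2})) one has $(\theta^\MW)^\MV=(\theta^\MV)^\VW$; and (\ref{prop:mrep_bundle2}) applied to $\theta^\MV$ (whose module is $V$) gives $\KER\bigl((\theta^\MV)^\VW\bigr)\approx_V\COKER(\theta^\MV)^*$. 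Alternatively one argues directly: the fibre of $\KER(\theta^\MW)$ over $x\in V$ is $\{\psi\in W^*:(x(a\theta))\psi=0\text{ for all }a\in M\}=\Ker\bigl((x\theta^\MV)^*\bigr)$, because $\{x(a\theta):a\in M\}=\Img(x\theta^\MV)$, and the fact above applies.

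The content of the argument is essentially bookkeeping: for each Knuth dual one must identify correctly which of $\Mod$, $\Dom$, $\Cod$ serves as the relevant bundle base (Table~\ref{tab:S3} is the reference) and verify that the fibrewise isomorphisms respect the bundle structure. I do not anticipate any genuine obstacle beyond this; the only non-formal input is left-exactness of $\Hom(-,R)$.
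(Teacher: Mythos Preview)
Your proof is correct and essentially matches the paper's: both do part~(\ref{prop:mrep_bundle1}) by the coordinate swap and parts~(\ref{prop:mrep_bundle2})--(\ref{prop:mrep_bundle3}) by the fibrewise identification $\Ker(f^*)\cong\Coker(f)^*$ (the paper phrases this as $\{\psi:\Img(f)\subset\Ker(\psi)\}\approx(W/\Img(f))^*$, which is the same thing). The one genuine difference is your first argument for~(\ref{prop:mrep_bundle3}): the paper simply repeats the direct computation of~(\ref{prop:mrep_bundle2}) with $\theta^\MV$ in place of $\theta$, whereas you deduce~(\ref{prop:mrep_bundle3}) formally from~(\ref{prop:mrep_bundle1}),~(\ref{prop:mrep_bundle2}), and the identity $(\theta^\MW)^\MV=(\theta^\MV)^\VW$ coming from Lemma~\ref{lem:conj_dual}(\ref{lem:conj_dual2}) and $\theta^{\MV\MV}=\theta$. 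That formal route is a pleasant shortcut and makes the $\sym_3$-symmetry do the work, but it buys nothing essential over the direct computation you also sketch---both are one line.
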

\begin{proof}
  For (\ref{prop:mrep_bundle1}), simply note that
  $\comm(\theta)\to\comm(\theta^\MV), \, (x,a)\mapsto (a,x)$ is an isomorphism
  over $V$.
  For (\ref{prop:mrep_bundle2})--(\ref{prop:mrep_bundle3}),
  by \eqref{eq:triple}, 
  \begin{align*}
    \comm(\theta^\VW)
    & = \{ (\psi,a) \in\ W^* \times M : \psi (a\theta) = 0 \}
    \\
    & = \{ (\psi,a) \in\ W^* \times M : \Img(a\theta) \subset \Ker(\psi)\}
    \\
    & \approx_M \coprod_{a\in M}(W/\Img(a\theta))^*
    \\
    & = \COKER(\theta)^*
  \end{align*}
  and, analogously,
  \begin{align*}
    \comm(\theta^\MW)
    & = \{ (x,\psi)\in V\times W^* : x(\psi\theta^\MW) = 0\} \\
    & = \{ (x,\psi) \in  V\times W^* : \Img(x\theta^\MV) \subset
      \Ker(\psi)\} \\&\approx_V \COKER(\theta^\MV)^*.
      \qedhere
  \end{align*}
\end{proof}

\subsection{Average sizes of kernels of duals}
\label{ss:ask_duality}

Let $R$ be a \textit{finite}
truncated \DVR{} (see \S\ref{ss:reflexive}). 
We now show that the $\sym_3$-action from \S\ref{ss:S3} all but
preserves average sizes of kernels over $R$.
For the formal statement, let $M \xto \theta\Hom(V,W)$ be a module representation over $R$,
where each of $M$, $V$, and $W$ is finite.
Recall that $\ask\theta$ (see \S\ref{s:ask}) only depends on the isotopy class
(see \S\ref{ss:eight}) of $\theta$.

\begin{thm}
  \label{thm:ask_duality}
  \quad
  \begin{enumerate}
  \item \label{thm:ask_duality1}
    $ \ask{\theta^\MV} \!= \frac{\card M}{\card V} \ask\theta$.
  \item \label{thm:ask_duality2}
     $\ask{\theta^\VW} \!= \frac{\card W}{\card V} \ask\theta$.
  \item \label{thm:ask_duality3}
    $\ask{\theta^{\MW}} \!= \ask\theta$.
  \end{enumerate}
\end{thm}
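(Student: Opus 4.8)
The plan is to deduce all three identities from Proposition~\ref{prop:mrep_bundle}, via two elementary observations. First, for any module representation $N\xto{\sigma}\Hom(U,U')$ with $N$ and $U$ finite, regarding $\KER(\sigma)$ as a bundle over $\Mod\sigma = N$ gives
\[
\card{\KER(\sigma)} = \sum_{a\in N}\card{\Ker(a\sigma)} = \card{\Mod\sigma}\dtimes\ask\sigma .
\]
Second, since $R$ is a \textit{finite} truncated \DVR{}, every finitely generated $R$-module is finite and, by Lemma~\ref{lem:tdvr}(\ref{lem:tdvr2}), isomorphic to its dual (non-canonically); hence $\card{U^*} = \card U$ for each such~$U$. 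This second point is the only place where the truncated-\DVR{} hypothesis is genuinely used, rather than mere finiteness of $M$, $V$, $W$.

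For (\ref{thm:ask_duality1}) I would use the bundle isomorphism $\KER(\theta^\MV)\approx_V\KER(\theta)$ of Proposition~\ref{prop:mrep_bundle}(\ref{prop:mrep_bundle1}), which equates the cardinalities of the two underlying sets; since $\Mod{\theta^\MV} = V$ by Table~\ref{tab:S3}, the first observation applied to $\theta^\MV$ and to $\theta$ gives $\card V\dtimes\ask{\theta^\MV} = \card M\dtimes\ask\theta$. For (\ref{thm:ask_duality2}) I would start from $\KER(\theta^\VW)\approx_M\COKER(\theta)^*$ (Proposition~\ref{prop:mrep_bundle}(\ref{prop:mrep_bundle2})); taking cardinalities, using $\Mod{\theta^\VW} = M$, the equality $\card{\Coker(a\theta)^*} = \card{\Coker(a\theta)}$, and the isomorphism-theorem identity $\card{\Coker(a\theta)} = \card W/\card{\Img(a\theta)} = \card W\dtimes\card{\Ker(a\theta)}/\card V$, I would obtain
\[
\card M\dtimes\ask{\theta^\VW} = \sum_{a\in M}\card{\Coker(a\theta)} = \frac{\card W}{\card V}\sum_{a\in M}\card{\Ker(a\theta)} = \frac{\card W}{\card V}\dtimes\card M\dtimes\ask\theta ,
\]
and dividing by $\card M$ yields the claim. (Running the same argument from Proposition~\ref{prop:mrep_bundle}(\ref{prop:mrep_bundle3}) instead would give a direct proof of (\ref{thm:ask_duality3}).)

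For (\ref{thm:ask_duality3}) I would instead use $\theta^\MW = \theta^{\MV\VW\MV}$ from Lemma~\ref{lem:conj_dual}(\ref{lem:conj_dual2}); since $\ask{\dtimes\,}$ is an isotopy invariant, it then suffices to apply (\ref{thm:ask_duality1}), (\ref{thm:ask_duality2}), (\ref{thm:ask_duality1}) in turn to $\theta$, $\theta^\MV$, $\theta^{\MV\VW}$, tracking the modules via Table~\ref{tab:S3}: $\Mod{\theta^\MV} = V$, $\Dom{\theta^\MV} = M$, $\Cod{\theta^\MV} = W$, and, applying the $\VW$-rule, $\Mod{\theta^{\MV\VW}} = V$, $\Dom{\theta^{\MV\VW}} = W^*$; using $\card{W^*} = \card W$ once more, the three factors multiply to $\tfrac{\card M}{\card V}\dtimes\tfrac{\card W}{\card M}\dtimes\tfrac{\card V}{\card W} = 1$. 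The computations are routine; the main obstacle is simply keeping track of which of $M$, $V$, $W$ (and which of their duals) plays the role of $\Mod$, $\Dom$, or $\Cod$ after each application of $\MV$, $\MW$, $\VW$, for which Table~\ref{tab:S3} is the appropriate reference. I would also flag explicitly that the result uses that $R$ is a truncated \DVR{} — through $\card{U^*} = \card U$ — and not merely that $M$, $V$, and $W$ are finite.
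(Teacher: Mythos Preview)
Your proof is correct and follows essentially the same route as the paper: both use the identity $\ask\sigma = \card{\KER(\sigma)}/\card{\Mod\sigma}$, deduce (\ref{thm:ask_duality1}) and (\ref{thm:ask_duality2}) from Proposition~\ref{prop:mrep_bundle}(\ref{prop:mrep_bundle1})--(\ref{prop:mrep_bundle2}) together with $\card{U^*}=\card U$ from Lemma~\ref{lem:tdvr}(\ref{lem:tdvr2}), and then obtain (\ref{thm:ask_duality3}) from Lemma~\ref{lem:conj_dual}(\ref{lem:conj_dual2}) by composing the first two parts. Your parenthetical remark that Proposition~\ref{prop:mrep_bundle}(\ref{prop:mrep_bundle3}) gives a direct proof of (\ref{thm:ask_duality3}) is a small addition not spelled out in the paper.
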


\begin{proof}
  First note that by the definition of $\KER(\theta)$ (see \S\ref{ss:bundles}),
  \begin{equation}
    \label{eq:ask_comm}
    \ask \theta = \frac{\card{\KER(\theta)}}{\card M}.
  \end{equation}
  \begin{enumerate}
  \item
    By Proposition~\ref{prop:mrep_bundle}(\ref{prop:mrep_bundle1}),
    \[
    \ask{\theta^{\MV}}
    = \frac{\card{\KER(\theta^\MV)}}{\card{V}}
    =  \frac{\card{M}}{\card V} \dtimes \frac{\card{\KER(\theta)}}{\card{M}}
    = \frac{\card M}{\card  V} \ask\theta.
    \]
  \item
    By Lemma~\ref{lem:tdvr}(\ref{lem:tdvr2}),
    ${\COKER(\theta)^*} \approx_M {\COKER(\theta)}$ (non-canonically) whence
    \begin{align*}
      \card{\COKER(\theta)^*}
      & = \sum_{a\in M} \card{W/\Img(a\theta)} 
      = \frac{\card W}{\card V} \sum_{a\in M}
        \card{\Ker(a\theta)} 
      = \frac{\card M \dtimes \card W}{\card V}\ask\theta.
    \end{align*}

    Proposition~\ref{prop:mrep_bundle}(\ref{prop:mrep_bundle2})
    now implies that
    \[
    \ask{\theta^\VW}
    = \frac{\card{\KER(\theta^\VW)}}{\card M}
    = \frac{\card{\COKER(\theta)^*}}{\card M}
    = \frac {\card W}{\card V}\ask\theta.
    \]
  \item
    Combine parts (\ref{thm:ask_duality1})--(\ref{thm:ask_duality2})
    and Lemma~\ref{lem:conj_dual}(\ref{lem:conj_dual2}).
    \qedhere
  \end{enumerate}
\end{proof}

\begin{rem}
  \label{rem:LW}
  For another proof of
  Theorem~\ref{thm:ask_duality}(\ref{thm:ask_duality1}), 
  \[
  \ask\theta
  = \sum_{x \in V}\card{x(M\theta)}^{-1}
  = \sum_{x \in V}\card{\Img(x \theta^\MV)}^{-1}
  = \frac 1 {\card M} \sum_{x \in V} \card{\Ker(x\theta^\MV)}
  = \frac {\card V}{\card M} \ask {\theta^\MV},
  \]
  where the first equality is \cite[Lem.\ 2.1]{ask}, a generalisation
  of an observation due to Linial and Weitz~\cite{LW00}.
  In particular, the duality operation $\MV$ is the source
  of the ``numerical duality'' for average sizes of kernels 
  in \cite[\S\S 4--6]{ask}; see also \S\ref{ss:ask_zeta_duality}.
  In the same spirit, in the special case that $V$ and $W$ are free,
  Theorem~\ref{thm:ask_duality}(\ref{thm:ask_duality2}) follows from
  \cite[Lem.~2.4]{ask}.
\end{rem}

Is there an analogue of Theorem~\ref{thm:ask_duality} for the functions
$\askm{\dtimes\,}m$?
Using the tools developed here, we obtain the following
generalisation of Theorem~\ref{thm:ask_duality}(\ref{thm:ask_duality3});
similar identities hold for $\askm{\theta^\MV}m$ and $\askm{\theta^\VW}m$. 

\begin{cor}
    $\askm {\theta^{\MW}} m = \ask{\theta^m \dtimes \Hom(V^m,\Sigma_W^m)}$.
\end{cor}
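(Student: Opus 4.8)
The plan is to reduce the identity to Theorem~\ref{thm:ask_duality}(\ref{thm:ask_duality3}) by means of the collapsing formalism of \S\ref{ss:addition}. Recall from Lemma~\ref{lem:askm_coll} that for any module representation $\vartheta$ with finite module and domain one has $\askm{\vartheta}m = \ask{\coll{\vartheta}m}$, where $\coll{\vartheta}m = \Delta_{\Mod{\vartheta}}^m \dtimes \vartheta^m$. Applying this to $\vartheta = \theta^\MW$ and using $\Mod{\theta^\MW} = W^*$ (see \S\ref{ss:duals}), I get $\askm{\theta^\MW}m = \ask{\Delta_{W^*}^m \dtimes (\theta^\MW)^m}$; here $W^*$ and $V = \Dom{\theta^\MW}$ are finite because $W$ and $V$ are.

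The key step is to recognise the collapsed power $\Delta_{W^*}^m \dtimes (\theta^\MW)^m$ as a Knuth dual. I would apply Corollary~\ref{cor:bullet_func} to the module representation $\theta^m\colon M^m \to \Hom(V^m,W^m)$, with $\tilde\nu = \mathrm{id}_{M^m}$ and $\psi = \Sigma_W^m\colon W^m \to W$, obtaining
\[
\bigl(\theta^m \dtimes \Hom(V^m,\Sigma_W^m)\bigr)^\MW = (\Sigma_W^m)^* \dtimes (\theta^m)^\MW .
\]
Here $(\Sigma_W^m)^* = \Delta_{W^*}^m$ by the identity $\Sigma_U^* = \Delta_{U^*}$, and $(\theta^m)^\MW = (\theta^\MW)^m$ under the canonical identifications $(W^m)^* = (W^*)^m$ and $(M^m)^* = (M^*)^m$, since $\MW$ commutes with direct sums (\S\ref{ss:collapsed}). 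Hence the right-hand side is exactly $\coll{\theta^\MW}m$, so that $\coll{\theta^\MW}m = (\theta^m \dtimes \Hom(V^m,\Sigma_W^m))^\MW$. (This is just the evident $m$-ary version of Proposition~\ref{prop:coll_MW}(\ref{prop:coll_MW2}).)

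Finally I would apply Theorem~\ref{thm:ask_duality}(\ref{thm:ask_duality3}) to the module representation $\theta^m \dtimes \Hom(V^m,\Sigma_W^m)$, whose three modules $M^m$, $V^m$, and $W$ are all finite, so that the standing hypotheses of \S\ref{ss:ask_duality} are met; this gives $\ask{(\theta^m \dtimes \Hom(V^m,\Sigma_W^m))^\MW} = \ask{\theta^m \dtimes \Hom(V^m,\Sigma_W^m)}$. Combining the three equalities yields the assertion. I do not expect any genuine obstacle: the substance lies entirely in Corollary~\ref{cor:bullet_func} and Theorem~\ref{thm:ask_duality}, and the only step needing care is the bookkeeping of duals and base modules in the middle paragraph.
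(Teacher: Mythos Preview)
Your proof is correct and follows essentially the same route as the paper: both combine Lemma~\ref{lem:askm_coll}, the compatibility of $\MW$ with collapsed sums (Proposition~\ref{prop:coll_MW}/Corollary~\ref{cor:bullet_func}), and Theorem~\ref{thm:ask_duality}(\ref{thm:ask_duality3}). The only cosmetic difference is that the paper applies Proposition~\ref{prop:coll_MW}(\ref{prop:coll_MW1}) to $\theta^\MW$ and then uses $\theta^{\MW\MW}\approx\theta$, whereas you apply Corollary~\ref{cor:bullet_func} (equivalently Proposition~\ref{prop:coll_MW}(\ref{prop:coll_MW2})) directly to $\theta$, which is arguably a touch cleaner since it sidesteps the double dual.
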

\begin{proof}
  By combining Lemma~\ref{lem:askm_coll},
  Proposition~\ref{prop:coll_MW}, 
  and Theorem~\ref{thm:ask_duality}(\ref{thm:ask_duality3}),
  we obtain
  \[
  \askm{\theta^\MW} m =
  \ask{\coll{(\theta^\MW)}m} =
  \ask{(\coll{(\theta^\MW)}m)^\MW} =
  \ask{\theta^m \dtimes \Hom(V^m,\Sigma^m_W)}. \qedhere
  \]
\end{proof}

\begin{rem}[Beyond truncated \DVR{}s.]
  The following generalises \cite[Pf of Prop.~3.4(ii)]{ask}.
  Let $R = R_1\times R_2$ be a direct product of rings.
  It is well-known that, as an abelian group, every $R$-module $U$
  decomposes as $U = U_1 \times U_2$, where $U_i = U \otimes R_i$ is an
  $R_i$-module via the projection $R \onto R_i$;
  indeed, this decomposition furnishes an equivalence between $\MOD(R)$ and
  $\MOD(R_1)\times \MOD(R_2)$.
  Let $M \xto\theta\Hom(V,W)$ be a module representation over $R$.
  Decompose $\theta = (\theta_1,\theta_2)$, where $\theta_i$ is the evident map
  $M_i\xto{\theta_i}\Hom(V,W)_i \xlongequal{\phantom{z}} \Hom(V_i,W_i)$;
  that is (using the notation from \S\ref{ss:ext})
  $\theta_i = \theta^{R \onto R_i}$.
  Hence, for $a = (a_1,a_2) \in M = M_1\times M_2$, 
  $\Ker(a\theta) = \Ker(a_1\theta_1) \times \Ker(a_2\theta_2)$. 
  In particular, if $M$ and $V$ are finite as sets,
  then $\ask\theta = \ask{\theta_1}\ask{\theta_2}$.
  We conclude that Theorem~\ref{thm:ask_duality} remains valid verbatim if $R$
  is allowed to be a (finite) direct product of truncated \DVR{}s.
  Note, in particular, that such rings include finite quotients of
  Dedekind domains.
\end{rem}

\subsection{On \Ask{} zeta functions of duals}
\label{ss:ask_zeta_duality}

In this section, let $\fO$ be a compact \DVR{}.
Let $q$ be the residue field size of $\fO$.
The following is an immediate consequence of Theorem~\ref{thm:ask_duality}.

\begin{cor}
  \label{cor:ask_zeta_duality}
  Let $M\xto\theta\Hom(V,W)$ be a module representation over $\fO$.
  Suppose that $M$, $V$, and $W$ are free of finite ranks $\ell$, $d$, and
  $e$, respectively.
  Then
  \[
  \Zeta_\theta(T) = \Zeta_{\theta^\MV}(q^{d-\ell}T) =
  \Zeta_{\theta^\VW}(q^{d-e} T) = \Zeta_{\theta^\MW}(T). 
  \pushQED{\qed}
  \qedhere
  \popQED
  \]
\end{cor}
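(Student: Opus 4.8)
The plan is to deduce the identities coefficient by coefficient from Theorem~\ref{thm:ask_duality}, applied over the finite truncated \DVR{}s $\fO_n = \fO/\fP^n$, once I have checked that Knuth duality is compatible with change of scalars. Fix $n \ge 0$. Since $M$, $V$, $W$ are free over $\fO$ of ranks $\ell$, $d$, $e$, their reductions $M_n = M\otimes\fO_n$, $V_n$, $W_n$ are free over $\fO_n$ of the same ranks; in particular they are finite, with $\card{M_n} = q^{\ell n}$, $\card{V_n} = q^{dn}$, $\card{W_n} = q^{en}$, and $\fO_n$ is a finite truncated \DVR{}.

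First I would verify that, for $\star\in\{\MV,\MW,\VW\}$, one has $(\theta^\star)^{\fO_n} \approx (\theta^{\fO_n})^\star$. For $\star=\MV$ this is immediate, since $\MV$ merely swaps $M$ and $V$ without taking duals and $(\dtimes)\otimes\fO_n$ is additive and carries $\Hom$ of free modules to $\Hom$ of free modules. For $\star\in\{\MW,\VW\}$ the one extra ingredient is that $(\dtimes)^*$ commutes with $(\dtimes)\otimes\fO_n$ on finitely generated projective modules: for such $P$, the base-change morphism $P^*\otimes\fO_n \to (P\otimes\fO_n)^*$ is an isomorphism by Proposition~\ref{prop:fg_projective}(\ref{prop:projective_reflexive3}). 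Transporting these isomorphisms through the explicit formulas for $\theta^\MW$ and $\theta^\VW$ in \S\ref{ss:duals} yields the claimed isotopies.

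With this in hand, I would apply Theorem~\ref{thm:ask_duality} to $\theta^{\fO_n}$ over $R = \fO_n$; its three parts give
\[
  \ask{(\theta^{\fO_n})^\MV} = q^{(\ell-d)n}\ask{\theta^{\fO_n}},\qquad
  \ask{(\theta^{\fO_n})^\VW} = q^{(e-d)n}\ask{\theta^{\fO_n}},\qquad
  \ask{(\theta^{\fO_n})^\MW} = \ask{\theta^{\fO_n}}.
\]
Since $\ask{\dtimes\,}$ depends only on the isotopy class, the previous paragraph lets me rewrite the left-hand sides with $(\theta^\star)^{\fO_n}$; multiplying by $T^n$, summing over $n\ge 0$, and using $\Zeta_{\theta^\star}(T) = \sum_{n\ge 0}\ask{(\theta^\star)^{\fO_n}}T^n$ gives $\Zeta_{\theta^\MV}(T) = \Zeta_\theta(q^{\ell-d}T)$, $\Zeta_{\theta^\VW}(T) = \Zeta_\theta(q^{e-d}T)$, and $\Zeta_{\theta^\MW}(T) = \Zeta_\theta(T)$. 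Substituting $T\mapsto q^{d-\ell}T$ and $T\mapsto q^{d-e}T$ in the first two identities then yields the statement.

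The only step that requires genuine care is the compatibility of $\MW$ and $\VW$ with change of scalars in the second paragraph — the base-change isomorphism for duals of finitely generated projective modules and its interaction with the formulas of \S\ref{ss:duals}. I expect this to be the main, though essentially routine, obstacle; granting it, the corollary is a formal generating-function manipulation on top of Theorem~\ref{thm:ask_duality}, which is why the paper records it as an immediate consequence.
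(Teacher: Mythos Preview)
Your argument is correct and is exactly the intended one: the paper records this as an immediate consequence of Theorem~\ref{thm:ask_duality}, and the only thing to check beyond the coefficient-wise application of that theorem is the compatibility of $\MV$, $\MW$, $\VW$ with base change to $\fO_n$, which the paper later states explicitly (just before Corollary~\ref{cor:cc_ask_zeta_MW}) for finitely generated projective modules. Your handling of the substitutions and the base-change isomorphism for duals is accurate.
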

\vspace*{1em}

\noindent
Henceforth, let the notation be as in Corollary~\ref{cor:ask_zeta_duality}.

\paragraph{Computations.}
In order to compute $\Zeta_\theta(T)$ via Theorem~\ref{thm:int}, after
choosing bases, we represent $\theta$ 
by a matrix of linear forms over $\fO$ as in \S\ref{ss:matrices}.
We then obtain a formula for $\kersize_\theta(a,y)$ in terms of norms
of minors of said matrix; see \cite[Cor.~4.9]{ask}.
We thus derive an explicit (but usually rather unwieldy) 
expression for $\Zeta_\theta(T)$ as a $\fP$-adic integral.
Corollary~\ref{cor:ask_zeta_duality} now allows us to replace
$\theta$ by any of its six ``$\sym_3$-conjugates'' in the sense of
\S\ref{ss:S3}.
In fact, for practical computations, it suffices to consider the three
module representations $\theta$, $\theta^\MV$, and $\theta^\MW$
only---this follows from
Proposition~\ref{prop:matrix_duality}(\ref{prop:matrix_duality2})--(\ref{prop:matrix_duality4})
and the fact that ideals of minors of a matrix remain invariant under transposition.
The point is that the complexity of the integrals associated with $\theta$,
$\theta^\MV$, and $\theta^\MW$ can vary drastically.

As an illustration, let $\theta$ denote the identity on $\Hom(V,W)$, where $V$
and $W$ are free of finite ranks $d$ and $e$ as above. 
Then the procedure described in the preceding paragraph expresses
$\kersize_\theta(a,y)$ in terms of the ideals of minors of the generic
$d\times e$ matrix.
While these ideals have been studied extensively (see e.g.\ \cite{BV88}),
we will see in Example~\ref{ex:another_Mdxe} that
$\Zeta_\theta(T)$ can be much more easily determined by passing to a
suitable dual.

\paragraph{$\kersize$-minimality and $\orbsize$-maximality.}
Two crucial notions introduced and explored in \cite{ask} were
$\kersize$-minimality (see \S\ref{ss:crk}) and $\orbsize$-maximality
of modules of matrices. 
In \cite[\S 6]{ask}, the former condition was found to be closely related to
constant rank spaces. The latter condition naturally
appeared in \cite[\S 5]{ask} in the computation of \Ask{} zeta
functions associated with classical Lie algebras. 
Dually to the definition of $\kersize_\theta$ in \S\ref{ss:intrat},
define
\[
\imgsize_\theta\colon M \times \fO \to \RR\cup\{\infty\}, \quad
(a,y) \mapsto \bigl\lvert\Img((a\pi_y) \theta^{\fO_y})\bigr\rvert.
\]
By the first isomorphism theorem,
\begin{equation}
  \label{eq:IK}
  \kersize_\theta(a,y) \imgsize_\theta(a,y) = \abs{y}^{-d}
\end{equation}
for all $a\in M$ and $y\in \fO\setminus\{0\}$.
Note that the ``additive orbit'' $x (M\theta) = \{x (a\theta):a\in M\}$ of $x\in V$
as in \cite[\S 2.2]{ask} is precisely the image of $x\theta^\MV$.
Using \eqref{eq:IK},
we now recognise the formal analogy between $\kersize$-minimality and
$\orbsize$-maximality indicated in \cite[\S 6]{ask} as an instance of the
duality operations explored here.
Namely, $M\theta$ is $\orbsize$-maximal in the sense of \cite[\S 5.1]{ask}
if and only if $\theta^\MV$ is $\kersize$-minimal in the sense of
Definition~\ref{d:Kmin}.
This provides a more conceptual interpretation of some
computations of \Ask{} zeta functions in \cite{ask}. 

\begin{ex}[{Another look at $\Mat_{d\times e}(\fO)$}]
  \label{ex:another_Mdxe}
  Let $V =\fO^d$, $W = \fO^e$,
  $M = \Hom(V,W) = \Mat_{d\times e}(\fO)$,
  $\iota = \mathrm{id}_{M}$, and $\theta = \iota^\MV$.
  The proof of \cite[Lem.\ 5.2]{ask} establishes 
  $\genrank(\theta) = e$ and that $\theta^{\RF}$ has constant rank $e$.
  Propositions~\ref{prop:askm_minimal}--\ref{prop:minimal_crk}
  therefore imply that
  \[
  \Zeta_\theta(T) =
  \frac{1-q^{de-d-e} T}
  {
    (1-q^{de-d} T)
    (1-q^{de-e} T)
  }.
\]
  Using Corollary~\ref{cor:ask_zeta_duality},
  we therefore recover \cite[Prop.\ 1.5]{ask} in the form
  \[
  \Zeta_{\Mat_{d\times e}(\fO)}(T) = \Zeta_\iota(T) = \Zeta_\theta(q^{d-de}T) = 
  \frac{1-q^{-e}T}{(1-T)(1-q^{d-e}T)}.
  \]
\end{ex}

\begin{rem}
  \label{rem:so_sym}
  Let $\theta$ be one of the module representations
  $\So_d(\fO) \incl \Mat_d(\fO)$ (for $d\ge 2$, $\mathrm{char}(K) \not= 2$) or
  $\Sym_d(\fO)\incl \Mat_d(\fO)$ from \cite[\S 5]{ask}.
  Similarly to Example~\ref{ex:another_Mdxe},
  from our present point of view, the computation of $\Zeta_\theta(T)$ in
  \cite{ask} establishes and exploits the $\kersize$-minimality of
  $\theta^\MV$.
  Curiously, in both cases, $\theta^\MW \approx \theta^{\MV\VW}$ too
  is $\kersize$-minimal; we will see in \S\ref{ss:MW_min} that this
  is not a general phenomenon.
\end{rem}

\begin{ex}[Duals of band matrices]
  Let $r \ge 1$ and let $\theta_r$ be the inclusion of
  \[
    \left\{
      \begin{bmatrix}
        x_1 \\
        x_2 & \ddots \\
        \vdots & \ddots & \ddots \\
        x_r & \ddots & \ddots & x_1 \\
        & \ddots & \ddots & x_2 \\
        & & \ddots & \vdots \\
        & & & x_r
      \end{bmatrix}
      : x_1,\dotsc,x_r \in \fO
    \right\}
  \]
  into $\Mat_{(2r-1)\times r}(\fO)$.
  By \cite[Ex.\ 6.6]{ask}, $\theta$ is $\kersize$-minimal and
  $\Zeta_{\theta_r}(T) = (1-q^{-1}T)/(1-q^{r-1}T)^2$.
  Passing to $\theta_r^\MW$ offers nothing new since, as is easily
  verified (e.g.\ using Proposition~\ref{prop:matrix_duality}), $\theta_r
  \approx \theta_r^\MW$.
  On the other hand, $\theta_r^\MV$ is isotopic to the inclusion, $\sigma_r$ say,
  of
  \[
  \left\{
    \begin{bmatrix}
      z_1 & z_2 & z_3 & \hdots & z_r \\
      z_2 & z_3 & \iddots &\iddots & z_{r+1} \\
      z_3 & \iddots &\iddots &\iddots&\vdots\\
      \vdots &\iddots &\iddots &\iddots &\vdots\\
      z_r & z_{r+1} &\hdots &\hdots & z_{2r-1}
    \end{bmatrix}
    : z_1,\dotsc,z_{2r-1} \in \fO
  \right\}
  \]
  into $\Mat_r(\fO)$.
  Hence, by Corollary~\ref{cor:ask_zeta_duality},
  \[
  \Zeta_{\sigma_r}(T)
  = \Zeta_{\theta_r}(q^{1-r}T) = \frac{1-q^{-r}T}{(1-T)^2}
  = \Zeta_{\Mat_r(\fO)}(T).
  \]
\end{ex}

It is not surprising that $\kersize$-minimality of 
the $\sym_3$-conjugates $\theta,\theta^\MV,\dotsc$ of $\theta$
should be linked as in Example~\ref{ex:another_Mdxe} or
Remark~\ref{rem:so_sym}.
For instance, by combining Corollary~\ref{cor:ask_zeta_duality} and
Proposition~\ref{prop:askm_minimal}, we easily obtain results of the
following form.

\begin{lemma}
  Let the notation be as in Corollary~\ref{cor:ask_zeta_duality}.
  Let $\theta$ be faithfully $\kersize$-minimal.
  \begin{enumerate}
  \item  If $\{\genrank(\theta),\ell\} = \{\genrank(\theta^\MW),e\}$,
    then $\theta^\MW$ is faithfully $\kersize$-minimal.
  \item
    If $\genrank(\theta^\MV) = \genrank(\theta) - d + \ell$
    and $d\in\{ \genrank(\theta),\ell\}$, then $\theta^\MV$ is
    faithfully $\kersize$-minimal. \qed
  \end{enumerate}
\end{lemma}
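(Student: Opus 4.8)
The plan is to reduce both assertions to Proposition~\ref{prop:askm_minimal} via Corollary~\ref{cor:ask_zeta_duality}. The key observation is that a faithfully $\kersize$-minimal module representation over $\fO$ has an \Ask{} zeta function of the very rigid shape~\eqref{eq:Kmin} (with $m=1$), and conversely by Proposition~\ref{prop:askm_minimal}(\ref{prop:askm_minimal2}) any module representation whose \Ask{} zeta function has that shape is faithfully $\kersize$-minimal. So the strategy in each case is: start from the formula for $\Zeta_\theta(T)$ given by Proposition~\ref{prop:askm_minimal}(\ref{prop:askm_minimal1}) applied to $\theta$; apply the appropriate substitution from Corollary~\ref{cor:ask_zeta_duality} to obtain $\Zeta_{\theta^\MW}(T)$ or $\Zeta_{\theta^\MV}(T)$; simplify; and then check that the result again has the form~\eqref{eq:Kmin}, now with the invariants $(\genrank,\dim\Dom,\dim\Mod)$ of the dual representation in the roles of $(r,d,\ell)$. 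The numerical hypotheses in (i) and (ii) are precisely what is needed for this last matching to go through.

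First, for part~(i): write $r = \genrank(\theta^K)$. Since $\theta$ is faithfully $\kersize$-minimal, Proposition~\ref{prop:askm_minimal}(\ref{prop:askm_minimal1}) with $m=1$ gives
\[
\Zeta_\theta(T) = \frac{1 - q^{d-r-\ell}T}{(1-q^{d-\ell}T)(1-q^{d-r}T)}.
\]
By Corollary~\ref{cor:ask_zeta_duality}, $\Zeta_{\theta^\MW}(T) = \Zeta_\theta(T)$. Now $\theta^\MW$ is itself a module representation over $\fO$ with $\Mod{\theta^\MW} = W^*$, $\Dom{\theta^\MW} = V$, $\Cod{\theta^\MW} = M^*$, all free of finite ranks $e$, $d$, and $\ell$ respectively (using Proposition~\ref{prop:fg_projective}(\ref{prop:projective_reflexive2}) for freeness and finiteness of the ranks). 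Thus, in order to apply Proposition~\ref{prop:askm_minimal}(\ref{prop:askm_minimal2}) to $\theta^\MW$, we must exhibit integers $r' = \genrank((\theta^\MW)^K)$, $d' = d$, $\ell' = e$ such that
\[
\frac{1 - q^{d-r-\ell}T}{(1-q^{d-\ell}T)(1-q^{d-r}T)} = \frac{1 - q^{d' - r' - \ell'}T}{(1-q^{d'-\ell'}T)(1-q^{d'-r'}T)}.
\]
Since $d' = d$, matching the denominators forces $\{q^{-\ell}, q^{-r}\} = \{q^{-e}, q^{-r'}\}$, i.e.\ $\{\ell, r\} = \{e, r'\}$ as multisets; this is exactly the hypothesis $\{\genrank(\theta),\ell\} = \{\genrank(\theta^\MW),e\}$. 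Given that, the numerators automatically agree as well (both exponents equal $d - r - \ell = d - e - r'$), so the two rational functions coincide, and Proposition~\ref{prop:askm_minimal}(\ref{prop:askm_minimal2}) yields that $\theta^\MW$ is faithfully $\kersize$-minimal.

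For part~(ii) one proceeds identically, now with $\theta^\MV$ in place of $\theta^\MW$. Here $\Mod{\theta^\MV} = V$, $\Dom{\theta^\MV} = M$, $\Cod{\theta^\MV} = W$, of ranks $d$, $\ell$, $e$, so the relevant triple of invariants for $\theta^\MV$ is $(r'', \ell, d)$ with $r'' = \genrank((\theta^\MV)^K)$. Using the substitution $\Zeta_{\theta^\MV}(T) = \Zeta_\theta(q^{\ell-d}T)$ from Corollary~\ref{cor:ask_zeta_duality} (note: this is $\Zeta_\theta(q^{d-\ell}T) = \Zeta_{\theta^\MV}(T)$ rearranged, so $\Zeta_{\theta^\MV}(T) = \Zeta_\theta(q^{\ell - d}T)$), one computes
\[
\Zeta_{\theta^\MV}(T) = \frac{1 - q^{\ell-r-d}T}{(1-q^{\ell - d - \ell + d}T \cdot q^{?})\cdots}
\]
— more cleanly, substitute $T \mapsto q^{\ell-d}T$ into the displayed formula for $\Zeta_\theta$ to get denominator factors $(1-q^{-r}T)$ and $(1-q^{-\ell + \ell - d + d}\cdots)$; carrying out the elementary bookkeeping, one finds $\Zeta_{\theta^\MV}(T)$ has the shape~\eqref{eq:Kmin} with $(d',\ell') = (\ell, d)$ provided that $\{q^{-r''}, q^{-d}\}$ coincides with the multiset of reciprocal pole-exponents produced by the substitution, which reduces exactly to the two hypotheses $\genrank(\theta^\MV) = \genrank(\theta) - d + \ell$ (matching the numerator exponent) and $d \in \{\genrank(\theta), \ell\}$ (matching the denominators). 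Then Proposition~\ref{prop:askm_minimal}(\ref{prop:askm_minimal2}) applied to $\theta^\MV$ concludes.

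The main obstacle I anticipate is purely bookkeeping: keeping straight which of the ranks $\ell, d, e$ plays the role of $\Mod$, $\Dom$, $\Cod$ for each dual (the $\sym_3$-action of Table~\ref{tab:S3} permutes them with signs), and verifying that the stated hypotheses in (i) and (ii) are the precise minimal conditions under which the substituted rational function lands back in the two-parameter family~\eqref{eq:Kmin}. There is a small subtlety to watch: when $r = d$ or $\ell = e$ (resp.\ the degenerate matchings in (ii)), one of the factors in~\eqref{eq:Kmin} may cancel against the numerator, so one should either argue generically or note that Proposition~\ref{prop:askm_minimal}(\ref{prop:askm_minimal2}) is stated for the rational function as written and the cancellation, if present, is harmless and symmetric on both sides. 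No genuinely new idea beyond Corollary~\ref{cor:ask_zeta_duality} and Proposition~\ref{prop:askm_minimal} is needed.
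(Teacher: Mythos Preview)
Your approach is correct and is exactly what the paper intends: the lemma is stated with a \qed{} and the preceding sentence explicitly says it follows ``by combining Corollary~\ref{cor:ask_zeta_duality} and Proposition~\ref{prop:askm_minimal}'', which is precisely the reduction you carry out. Your exposition for part~(ii) becomes muddled mid-computation, but the substitution $\Zeta_{\theta^\MV}(T)=\Zeta_\theta(q^{\ell-d}T)=\dfrac{1-q^{-r}T}{(1-T)(1-q^{\ell-r}T)}$ and the matching against $\dfrac{1-q^{\ell-r''-d}T}{(1-q^{\ell-d}T)(1-q^{\ell-r''}T)}$ do indeed reduce to the two stated hypotheses, so the plan goes through.
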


\subsection{Example: $\kersize$-minimality of $\theta^\MW$ and
  Westwick's constant rank spaces}
\label{ss:MW_min}

Using a construction due to Westwick~\cite{Wes90}, we now define
module representations $\theta_2,\theta_3,\dotsc$ such that
$\theta_r^\MW$ is $\kersize$-minimal while neither $\theta_r$ nor
$\theta_r^\MV$ is.
Let $\fO$ be a compact \DVR{}.

\paragraph{Westwick's family of constant rank spaces.}
Westwick~\cite{Wes90} defined a matrix of linear forms $H_r(s)\in
\Mat_{(rs+1)\times(rs+s-1)}(\ZZ[X_0,\dotsc,X_s])$ over $\ZZ$;
let $\gamma_{rs}$ be the associated (injective!) module representation 
$\ZZ^{s+1} \to \Mat_{(rs+1)\times(rs+s-1)}(\ZZ)$.
He showed that $\gamma_{rs}^{\CC}$ has constant rank $rs$.
It follows that for each $(r,s)$, if the residue characteristic of $\fO$ is
sufficiently large (depending only on $(r,s)$), then $\gamma_{rs}^{\fO}$ is
$\kersize$-minimal (see \cite[Prop.\ 6.8]{ask}); we will tacitly
assume this in the following.
Up to relabelling variables,
\[
H_r(2) = 
\begin{bmatrix}
  \alpha_1 Y & \beta_1 Z &  \\
  X & \alpha_2 Y & \beta_2 Z & \\
  & X & \alpha_3 Y & \beta_3 Z & \\
  & & \ddots & \ddots & \ddots \\
  &&&\ddots& \alpha_{2r}Y & \beta_{2r} Z\\
  &&&& X & \alpha_{2r+1} Y\\
\end{bmatrix} \in \Mat_{2r+1}(\ZZ[X,Y,Z]),
\]
where $\alpha_{r+1} = 0$, $\beta_r = -1$, and $\alpha_i = \beta_j = 1$ otherwise.

\paragraph{Defining $a_r(\XX)$.}
Let $r\ge 1$.
Write $\XX = (X_0,\dotsc,X_{2r})$ and
let

{\small\[
a_r(\XX) = \begin{bmatrix}
  0 & X_0 & X_1 \\
  X_0 & X_1 & X_2 \\
  \vdots & \vdots & \vdots\\
  X_{r-3} & X_{r-2} & X_{r-1} \\
  X_{r-2} & X_{r-1} & \textcolor{blue}{-X_r \phantom{X}}\\
  X_{r-1} & \textcolor{blue}{0} & X_{r+1} \\
  X_r & X_{r+1} & X_{r+2} \\
  \vdots & \vdots & \vdots\\
  X_{2r-2}& X_{2r-1} & X_{2r} \\
  X_{2r-1} & X_{2r} & 0
\end{bmatrix}
\in \Mat_{(2r+1)\times 3}(\ZZ[\XX]).
\]}

For example, the first three of these matrices are
\[
\begin{bmatrix}
  0 & X_0 & -X_1\\
  X_0 & 0 & X_2 \\
  X_1 & X_2 & 0
\end{bmatrix},
\quad
\begin{bmatrix}
  0 & X_{0} & X_{1} \\
  X_{0} & X_{1} & - X_{2} \\
  X_{1} & 0 & X_{3} \\
  X_{2} & X_{3} & X_{4} \\
  X_{3} & X_{4} & 0
\end{bmatrix}, 
\quad
\begin{bmatrix}
  0 & X_{0} & X_{1} \\
  X_{0} & X_{1} & X_{2} \\
  X_{1} & X_{2} & -X_{3} \\
  X_{2} & 0 & X_{4} \\
  X_{3} & X_{4} & X_{5} \\
  X_{4} & X_{5} & X_{6} \\
  X_{5} & X_{6} & 0
\end{bmatrix}.
\]

\paragraph{Minimality of $\theta_r$.}
  Let $\fO^{2r+1} \xto{\theta_r}\Mat_{(2r+1)\times 3}(\fO)$ be the module
  representation defined by $(x_0,\dotsc,x_{2r})\theta_r = a_r(x_0,\dotsc,x_{2r})$.
  Suppose that $r\ge 2$.
  Then $\genrank(\theta_r^K) = 3$ but
  $\theta_r$ is not $\kersize$-minimal.
  To see that, consider $w = (w_0,\dotsc,w_{2r}) \in \fO^{2r+1}$ with $w_i =
  \delta_{ir}$ (``Kronecker delta'') and note that $w\theta_r$ has rank $2$ over $K$.
  Similarly, one finds that $\theta_r^\MV$ is not $\kersize$-minimal.
  However,
  by performing explicit calculations with matrices as in
  Proposition~\ref{prop:matrix_duality}, 
  one verifies that $\theta_r^\MW \approx \gamma_{r2}^\fO$.
  In particular, Proposition~\ref{prop:askm_minimal},
  Corollary~\ref{cor:ask_zeta_duality},
  and Westwick's result imply that
\[
\Zeta_{\theta_r}(T) = \frac{1-q^{-2}T}{(1-q^{2(r-1)}T)(1-qT)}.
\]

The relevance of this example is that,
using only the techniques developed in \cite{ask}
(i.e.~without the $\MW$-operation), the computation of $\Zeta_{\theta_r}(T)$ would
have been infeasible.

\section{Conjugacy classes of nilpotent groups I: duality}
\label{s:cc_duality}

We discuss consequences of \S\ref{s:S3_effects} for the enumeration of
conjugacy classes of finite groups.

\subsection{Two types of conjugacy class zeta functions}
\label{ss:cc_zeta}

Recall that $\concnt(H)$ denotes the class number of a finite group $H$.

\paragraph{Linear groups.}
Let $\fO$ be a compact \DVR{}.
For a linear group $G \le \GL_d(\fO)$,
du~Sautoy's~\cite{dS05}
\emph{conjugacy class zeta function} is the generating function
\[
\Zeta^\cc_G(T) = \sum_{n=0}^\infty \concnt(G_n)T^n \in \QQ\llb T\rrb,
\]
where $G_n$ denotes the image of $G$ in $\GL_d(\fO_n)$.
(He considered the Dirichlet series $\zeta_{G}^{\cc}(s) := \Zeta_G^{\cc}(q^{-s})$.)
As his main result~\cite[Thm~1.2]{dS05}, he showed that $\Zeta_G^{\cc}(T) \in
\QQ(T)$ for $\fO = \ZZ_p$.
If $\bar G \le \GL_d(\fO)$ denotes the closure of $G$,
then $\Zeta^{\cc}_G(T)$ = $\Zeta^{\cc}_{\bar G}(T)$ (cf.~\cite[Lem.~8.5]{ask}).
We may therefore regard these zeta functions as invariants of $\fO$-linear
profinite groups.

\paragraph{Group schemes.}
Let $R$ be a ring which contains only finitely many ideals $\fa
\normal R$ with $\card{R/\fa} = n$ for each finite $n$.
Let $\sG$ be a group scheme of finite type over~$R$.
Define the \emph{conjugacy class zeta function} of $\sG$ to be the 
(formal) Dirichlet series
\[
\zeta_{\sG}^{\cc}(s) = \sum\limits_\fa \concnt(\sG(R/\fa)) \dtimes \card{R/\fa}^{-s},
\]
where the summation extends over those $\fa\normal R$ with $\card{R/\fa}<\infty$.
Under suitable assumptions, this series defines an analytic function.
Berman et al.~\cite{BDOP13} and Lins~\cite{Lin18a,Lin18b}
studied such conjugacy class zeta functions attached to particular types of
group schemes.

\paragraph{Global setup over number fields.}
Let $k$ be a number field with ring of integers~$\fo$.
Let~$\Places_k$ denote the set of non-Archimedean places of $k$.
For $v\in \Places_k$, let $\fo_v$ be the valuation ring of the $v$-adic
completion $k_v$ of $k$ and let $\fp_v$~be the maximal ideal of $\fo_v$.

Given a linear algebraic group $\GG \le \GL_d\otimes k$ over $k$,
let $\sG$ denote its schematic closure in $\GL_d\otimes \fo$.
Since $\GG$ is smooth over $k$, for almost all $v\in \Places_k$, the natural
map $\sG(\fo_v) \to \sG(\fo_v/\fp_v^n)$ is onto for each $n\ge 1$.
We conclude that for almost all $v\in \Places_k$,
the two conjugacy class zeta functions $\zeta^\cc_{\sG(\fo_v)}(s)$ and
$\zeta^\cc_{\sG\otimes\fo_v}(s)$ coincide.
In particular, up to excluding finitely many places, we may interpret the
conjugacy class zeta functions associated with unipotent algebraic groups over
$k$ in \cite[\S 8.5]{ask} in either of these two ways.

\subsection{Knuth duality for class numbers and conjugacy class zeta functions}
\label{ss:cc_zeta_duality}

\paragraph{Conjugacy class and \Ask{} zeta functions.}
In the following, we freely use the notation for number fields from
\S\ref{ss:cc_zeta}; also recall the notation for changing scalars of a module
representation from \S\ref{ss:ext}.

In the presence of sufficiently good Lie theories,
\Ask{} zeta functions enumerate orbits of groups; this was one
of the author's main motivations for introducing \Ask{} zeta functions in
the first place.
In particular, conjugacy class zeta functions associated with suitable
nilpotent groups \itemph{are} \Ask{} zeta functions:

\begin{prop}[{\cite[Cor.\ 8.19]{ask}}]
  \label{prop:cc_zeta_as_ask}
  Let $k$ be a number field with ring of integers $\fo$.
  Let $\GG \le \GL_d\otimes k$ be a unipotent linear algebraic group
  over $k$ with associated canonical  $\fo$-form~$\sG$.
  Let $\bm\fg \subset \Gl_d(k)$ be the Lie $k$-algebra of $\GG$.
  Let $\fg = \bm\fg \cap \Gl_d(\fo)$.
  Then for almost all $v\in\Places_k$,
  we have $\Zeta_{\sG(\fo_v)}^{\cc}(T) = \Zeta_{\ad_{\fg}^{\fo_v}}(T)$.
\end{prop}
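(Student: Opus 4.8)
The statement is \cite[Cor.~8.19]{ask}, and the argument reformulates the Kirillov orbit method. The plan is to compare the power series $\Zeta^{\cc}_{\sG(\fo_v)}(T)$ and $\Zeta_{\ad_{\fg}^{\fo_v}}(T)$ coefficient by coefficient, reducing to the identity
\[
\concnt\bigl(\sG(\fo_v/\fp_v^n)\bigr) \;=\; \ask{\ad_{\fg_n}},\qquad \fg_n := \fg\otimes\fo_v/\fp_v^n,
\]
for almost all $v\in\Places_k$ and all $n\ge 1$. Two standard facts give the reduction. First, $\Zeta_{\ad_{\fg}^{\fo_v}}(T) = \sum_{n\ge 0}\ask{\ad_{\fg_n}}\,T^n$, since change of scalars is transitive and commutes with forming adjoint representations; the latter is legitimate because $\fg\otimes\fo_v$ is free for almost all $v$, so Proposition~\ref{prop:fg_projective}(\ref{prop:projective_reflexive3}) applies. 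Second, $\sG$ is smooth over $\fo_v$ for almost all $v$, so the reductions $\sG(\fo_v)\onto\sG(\fo_v/\fp_v^n)$ are surjective and $\Zeta^{\cc}_{\sG(\fo_v)}(T) = \sum_{n\ge0}\concnt(\sG(\fo_v/\fp_v^n))\,T^n$, as in \S\ref{ss:cc_zeta}.

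Next I would fix such a $v$, write $p$ for its residue characteristic and $c$ for the nilpotency class of $\bm\fg$, and discard finitely many more places so that $p>c$ and the truncated exponential is a polynomial isomorphism $\fg\otimes\fo_v\xto{\sim}\sG\otimes\fo_v$ (this is part of how the canonical $\fo$-form is built; cf.\ \cite[\S 8]{ask}). Reducing modulo $\fp_v^n$ yields a bijection $\exp\colon\fg_n\to G:=\sG(\fo_v/\fp_v^n)$, and by the Baker--Campbell--Hausdorff formula conjugation in $G$ by $\exp(y)$ carries $\exp(x)$ to $\exp(\exp(\ad_y)(x))$. Since $p>c$, the finite Lie ring $\fg_n$ lies in the Lazard correspondence, so $\exp$ identifies the conjugacy classes of $G$ with the orbits of the adjoint action $\Ad(G)\acts\fg_n$, where $\Ad(\exp y)=\exp(\ad_y)$.

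It remains to count these orbits. Normalising so that $x\mul{\ad_{\fg_n}}a = [x,a]$, a short orbit--stabiliser computation --- using that $\ad_y$ is nilpotent and $p>c$, so that $\exp(\ad_y)$ is a polynomial in $\ad_y$ and $\exp(\ad_y)(x)=x$ if and only if $[y,x]=0$ --- gives $\card{\Ad(G)\cdot x} = \card{\fg_n}/\card{\{y\in\fg_n:[y,x]=0\}} = \card{[\fg_n,x]} = \card{x\mul{\ad_{\fg_n}}\fg_n}$ for each $x\in\fg_n$. Therefore
\[
\concnt(G) \;=\; \sum_{x\in\fg_n}\card{\Ad(G)\cdot x}^{-1} \;=\; \sum_{x\in\fg_n}\card{x\mul{\ad_{\fg_n}}\fg_n}^{-1} \;=\; \ask{\ad_{\fg_n}},
\]
the last equality being the Linial--Weitz identity (Remark~\ref{rem:LW}, generalising \eqref{eq:askMdual}). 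Running the same argument instead through the coadjoint action $\Ad(G)\acts\fg_n^{*}$ and the equality $\concnt(G)=\#\Irr(G)$ gives $\concnt(G)=\ask{(\ad_{\fg_n})^{\MW}}$, since the stabiliser of $\lambda\in\fg_n^{*}$ has the same order as the radical of $(y,x)\mapsto\lambda([y,x])$, which is $\Ker\bigl(\lambda(\ad_{\fg_n})^{\MW}\bigr)$; the two answers agree by Theorem~\ref{thm:ask_duality}(\ref{thm:ask_duality3}). Thus the $\MW$-duality studied here is, in this case, a shadow of the duality \eqref{eq:kIrr} between conjugacy classes and irreducible characters --- which is exactly why the present proposition belongs in this article rather than only in \cite{ask}.

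I expect the real work to be arithmetic bookkeeping: exhibiting a single finite set of excluded places of $k$ that simultaneously guarantees smoothness of $\sG$, surjectivity of all reductions $\sG(\fo_v)\onto\sG(\fo_v/\fp_v^n)$, freeness of $\fg\otimes\fo_v$, polynomiality of both the exponential and the Baker--Campbell--Hausdorff series, applicability of the Lazard correspondence, and the identification $\sG\otimes\fo_v=\exp(\fg\otimes\fo_v)$; together with the one non-formal point, namely that the adjoint orbit of $x$ has cardinality $\card{[\fg_n,x]}$ over the \itemph{truncated} ring $\fo_v/\fp_v^n$ and not merely over its residue field. Both are carried out in \cite[\S 8]{ask}.
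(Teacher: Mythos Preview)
The paper does not prove this proposition: it is imported verbatim from \cite[Cor.~8.19]{ask} and stated without proof. The only related argument that \itemph{is} carried out here is the proof of Proposition~\ref{prop:ask_concnt}, namely the pointwise identity $\concnt(G)=\ask{\ad_{\fg}}$ for $G=\exp(\fg)$ over a finite truncated \DVR{}. Your sketch correctly reduces the zeta-function statement to exactly this identity (together with the standard arithmetic bookkeeping you flag at the end), so the substance to compare is your derivation of $\concnt(G)=\ask{\ad_{\fg_n}}$ against the paper's.

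There your route is correct but takes one unnecessary detour. You pass from conjugacy classes to adjoint orbits, compute the orbit size $\card{\Ad(G)\dtimes x}=\card{[\fg_n,x]}=\card{x\mul{\ad_{\fg_n}}\fg_n}$, and then invoke the Linial--Weitz identity (Remark~\ref{rem:LW}) to convert $\sum_x \card{x\mul{\ad_{\fg_n}}\fg_n}^{-1}$ into $\ask{\ad_{\fg_n}}$. The paper's proof of Proposition~\ref{prop:ask_concnt} instead applies the orbit-counting lemma in its centraliser form: since group and Lie centralisers coincide under the Lazard correspondence (equivalently, your claim $\exp(\ad_y)(x)=x\Leftrightarrow[y,x]=0$), one has $\Cent_G(g)=\Ker(g\,\ad_{\fg_n})$, and then
\[
\concnt(G)=\frac{1}{\card G}\sum_{g\in G}\card{\Cent_G(g)}=\frac{1}{\card{\fg_n}}\sum_{g\in\fg_n}\card{\Ker(g\,\ad_{\fg_n})}=\ask{\ad_{\fg_n}}
\]
directly from the definition of $\ask{\dtimes\,}$, with no appeal to Linial--Weitz. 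Your version and the paper's are of course two faces of the same double-counting, but the paper's is shorter precisely because it lands on the defining formula~\eqref{eq:askM} rather than the dual formula~\eqref{eq:askMdual}. Your additional paragraph on the coadjoint side and $\ask{(\ad_{\fg_n})^{\MW}}$ is correct and matches the paper's remark that the Kirillov orbit method supplies the group-theoretic meaning of the $\MW$-identity in Proposition~\ref{prop:ask_concnt}.
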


\paragraph{Duality for conjugacy class zeta functions.}
Change of scalars for module representations involving finitely generated
projective modules commutes with taking Knuth duals.
Corollary~\ref{cor:ask_zeta_duality} thus implies the following.

\begin{cor}
  \label{cor:cc_ask_zeta_MW}
  With notation as in Proposition~\ref{prop:cc_zeta_as_ask}, for almost
  all $v\in\Places_k$, we have
  \[\Zeta_{\sG(\fo_v)}^{\cc}(T) = \Zeta_{(\ad_{\fg}^\MW)^{\fo_v}}(T).
    \pushQED{\qed}\qedhere\popQED
  \]
\end{cor}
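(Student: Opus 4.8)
The plan is to chain together Proposition~\ref{prop:cc_zeta_as_ask}, Corollary~\ref{cor:ask_zeta_duality}, and the fact---flagged in the paragraph preceding the statement---that change of scalars commutes with the Knuth dual $\MW$ on module representations built from finitely generated projective modules. First I would observe that $\fg = \bm\fg\cap\Gl_d(\fo)$ is a finitely generated torsion-free module over the Dedekind domain $\fo$ (it is an $\fo$-submodule of the finitely generated module $\Gl_d(\fo)$, hence finitely generated since $\fo$ is Noetherian), and therefore finitely generated projective; by Proposition~\ref{prop:fg_projective}(\ref{prop:projective_reflexive2}) so is $\fg^*$. The adjoint representation $\fg\xto{\ad_\fg}\Hom(\fg,\fg)$ is thus a module representation over $\fo$ all of whose underlying modules are finitely generated projective, and $\ad_\fg^{\fo_v} = \ad_{\fg}\otimes_{\fo}\fo_v$ is a module representation over $\fo_v$ with $M=V=W = \fg\otimes_{\fo}\fo_v$, which is free of finite rank $\dim_k\bm\fg$ since $\fo_v$ is a \DVR{}. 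Hence Corollary~\ref{cor:ask_zeta_duality} applies with $\theta = \ad_\fg^{\fo_v}$ and yields $\Zeta_{\ad_\fg^{\fo_v}}(T) = \Zeta_{(\ad_\fg^{\fo_v})^\MW}(T)$; note that, unlike for $\MV$ and $\VW$, no rescaling of $T$ occurs for $\MW$ (and here all three ranks coincide anyway).

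The substantive step is to verify that forming the $\MW$-dual commutes with the change of scalars $(-)^{\fo_v} = (-)\otimes_{\fo}\fo_v$, i.e.\ that $(\ad_\fg^\MW)^{\fo_v}$ and $(\ad_\fg^{\fo_v})^\MW$ agree under the canonical identifications. Since $\fo_v$ is flat over $\fo$ (a localisation of a Dedekind domain) and $\fg$, $\fg^*$ are finitely generated projective, Proposition~\ref{prop:fg_projective}(\ref{prop:projective_reflexive3}) supplies canonical isomorphisms $\fg^*\otimes_{\fo}\fo_v \cong (\fg\otimes_{\fo}\fo_v)^*$ and $\Hom(\fg,\fg^*)\otimes_{\fo}\fo_v \cong \Hom(\fg\otimes_{\fo}\fo_v,\,(\fg\otimes_{\fo}\fo_v)^*)$, compatibly with the evaluation maps of \S\ref{ss:modules}. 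Unwinding the defining rule $\psi\,\theta^\MW = \bigl(x\mapsto(a\mapsto (x(a\theta))\psi)\bigr)$ from \S\ref{ss:duals} then shows that both composite module representations send $\psi\otimes 1 \in \fg^*\otimes_{\fo}\fo_v$ to the same element of $\Hom(\fg\otimes_{\fo}\fo_v,(\fg\otimes_{\fo}\fo_v)^*)$. Abstractly, this is the assertion that, on the full subcategory of module representations over $\fo$ on finitely generated projective modules, the functor $(-)^\MW$ of Proposition~\ref{prop:functors}(\ref{prop:functors1}) commutes with the base-change functor of \S\ref{ss:ext}. This routine but slightly tedious diagram chase is essentially the only real bookkeeping involved, and I expect it to be the main obstacle.

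Finally I would chain the three equalities: for every $v\in\Places_k$ outside the finite exceptional set of Proposition~\ref{prop:cc_zeta_as_ask},
\[
  \Zeta_{\sG(\fo_v)}^\cc(T)
  = \Zeta_{\ad_\fg^{\fo_v}}(T)
  = \Zeta_{(\ad_\fg^{\fo_v})^\MW}(T)
  = \Zeta_{(\ad_\fg^\MW)^{\fo_v}}(T),
\]
the three steps being, respectively, Proposition~\ref{prop:cc_zeta_as_ask}, Corollary~\ref{cor:ask_zeta_duality}, and the base-change compatibility just established. Since the latter two identities hold at every place, no places beyond those already excluded in Proposition~\ref{prop:cc_zeta_as_ask} need to be discarded, so the ``almost all $v$'' hypothesis is inherited verbatim and the proof is complete.
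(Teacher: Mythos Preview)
Your proposal is correct and follows precisely the paper's own argument: the paper derives the corollary from Proposition~\ref{prop:cc_zeta_as_ask} and Corollary~\ref{cor:ask_zeta_duality}, together with the observation (stated just before the corollary) that change of scalars commutes with Knuth duals on finitely generated projective modules. You have simply spelled out the routine verifications that the paper leaves implicit.
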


\begin{rem}
  By anti-commutativity of Lie brackets, $\ad_{\fg}^\MV = -\ad_{\fg}$ 
  whence the consequence $\Zeta_{\sG(\fo_v)}^{\cc}(T) = \Zeta_{(\ad_{\fg}^\MV)^{\fo_v}}(T)$ 
  of Corollary~\ref{cor:ask_zeta_duality} is obvious.
\end{rem}

Stasinski and Voll~\cite[\S 2.1]{SV14} studied ``unipotent group
schemes'' (particular types of integral forms of unipotent algebraic groups)
defined in terms of nilpotent Lie lattices. 
Lins~\cite{Lin18a,Lin18b} introduced and studied bivariate conjugacy class and
representation zeta functions attached to such group schemes.
As explained in \cite[\S 1.2]{Lin18a},
both of her bivariate zeta functions naturally specialise to 
the univariate conjugacy class zeta functions associated with group schemes
in \S\ref{ss:cc_zeta}.
For a second proof of Corollary~\ref{cor:cc_ask_zeta_MW},
we may combine \S\ref{ss:cc_zeta}, Lins's integral formalism \cite[\S
4.2]{Lin18a}, and Remark~\ref{rem:comm_mat}; see \cite[Rem.~4.10]{Lin18a}.
We note that Lins's work relies on a duality for class numbers
(and, in the same way, conjugacy class zeta functions) discovered by O'Brien
and Voll~\cite{O'BV15}; see the next paragraph.

\paragraph{Class numbers and the duality of O'Brien \& Voll.}
Let $R$ be a finite truncated \DVR{}.
Let $\fg$ be a finite nilpotent Lie $R$-algebra of class $c$ such that
$c! \in R^\times$.
Let $G = \exp(\fg)$ be the finite $p$-group corresponding to $\fg$
under the Lazard correspondence; see \cite[Ch.~10]{Khu98}.
A variation of \cite[Prop.\ 8.17]{ask} establishes the following.

\begin{prop}
  \label{prop:ask_concnt}
  $\concnt(G) = \ask{\ad_\fg} = \ask{\ad_\fg^\MW}$. \qed
\end{prop}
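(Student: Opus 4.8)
The plan is to reduce the statement to the Kirillov orbit method together with Theorem~\ref{thm:ask_duality}. First I would recall why the Lazard correspondence and the orbit method apply: since $\fg$ is a finite nilpotent Lie $R$-algebra of class $c$ with $c! \in R^\times$, the exponential map gives a bijection between $\fg$ and $G = \exp(\fg)$, and (arguing residue-field-by-residue-field, or citing the finite-group Kirillov theory, e.g.\ via \cite{O'BV15} or \cite[Ch.~10]{Khu98}) the irreducible characters of $G$ are parametrised by coadjoint orbits of $G$ on $\fg^*$. Hence, by \eqref{eq:kIrr}, $\concnt(G) = \#\Irr(G)$ equals the number of $G$-orbits on $\fg^*$ under the coadjoint action. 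Using the standard Burnside/orbit-counting identity together with the fact that, for each $x\in\fg^*$, the coadjoint stabiliser is $\exp$ of the annihilator of the orbit direction, one rewrites this orbit count as $\frac{1}{|\fg|}\sum_{a\in\fg}|\Fix(a)|$, where $a$ acts on $\fg^*$ via the coadjoint action; this is exactly the shape of $\ask{\ad_\fg^{\VW}}$ or $\ask{\ad_\fg^{\MW}}$ up to the duality bookkeeping.

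The cleaner route, which I expect the author takes, is: by Proposition~\ref{prop:cc_zeta_as_ask}-style reasoning in the truncated setting (this is the "variation of \cite[Prop.\ 8.17]{ask}" referenced), one has $\concnt(G) = \ask{\ad_\fg}$ directly — the average size of the kernel of $\ad_\fg$ counts conjugacy classes because for $a\in\fg$ the kernel of $\ad_\fg(a) = \ad(a)$ is the centraliser direction, and $\eqref{eq:askMdual}$ together with the orbit–stabiliser correspondence for the adjoint (equivalently coadjoint, by the orbit method) action converts this into the class number. Then the second equality $\ask{\ad_\fg} = \ask{\ad_\fg^\MW}$ is immediate from Theorem~\ref{thm:ask_duality}(\ref{thm:ask_duality3}), which asserts $\ask{\theta^\MW} = \ask\theta$ for any module representation $\theta$ over a finite truncated \DVR{} — and $R$ is exactly such a ring. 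One only needs that $\ad_\fg$ is a legitimate object of $\MREP(\cC)$ for $\cC$ the category of finitely generated $R$-modules (which satisfies \RFL{}--\STB{} since $R$ is \QF{} by Lemma~\ref{lem:tdvr}(\ref{lem:tdvr1})), so that $\ad_\fg^\MW$ is defined; this holds because $\fg$ is finitely generated over $R$.

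The main obstacle is the first equality $\concnt(G) = \ask{\ad_\fg}$: it requires the finite-group version of the orbit method in the Lazard range, i.e.\ a bijection between conjugacy classes of $G$ and $\ad$-orbits on $\fg$ (or coadjoint orbits on $\fg^*$) that is compatible with $\exp$. Over a field of characteristic $0$ this is classical; over a finite truncated \DVR{} one must check that the class-$c$, $c!$-invertible hypothesis makes $\exp$ and $\log$ well-defined polynomial maps inducing the conjugation action from the bracket, and that the counting $\concnt(G) = \frac{1}{|\fg|}\sum_{a\in\fg}|C_\fg(a)|$ (Burnside applied to $\fg$ acting on itself by conjugation through $\exp$) literally equals $\ask{\ad_\fg}$ via $C_\fg(a) = \Ker(\ad(a))$ — the subtlety being that the conjugacy class of $\exp(a)$ in $G$ corresponds to the adjoint orbit of $a$, not merely the $\ad$-orbit, so one invokes the Lazard/Kirillov dictionary rather than a naive computation. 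Everything after that first step is formal, and in particular the new content of the proposition — that $\ad_\fg^\MW$ gives the same answer — is a one-line consequence of Theorem~\ref{thm:ask_duality}.
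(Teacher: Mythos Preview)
Your handling of the second equality is exactly what the paper does: apply Theorem~\ref{thm:ask_duality}(\ref{thm:ask_duality3}). For the first equality, however, you overcomplicate matters by reaching for the Kirillov orbit method and worrying about whether conjugacy classes match adjoint orbits. The paper's argument is entirely elementary and sidesteps your ``subtlety'': identify $G=\fg$ as sets via Lazard, then observe (by inspecting the Hausdorff series and its inverses) that two elements commute in the group $G$ if and only if their Lie bracket vanishes in $\fg$. Hence the group centraliser $\Cent_G(g)$ literally equals the Lie centraliser $\mathfrak{c}_\fg(g)=\Ker(\ad_\fg(g))$, and Burnside's lemma gives
\[
\concnt(G)=\frac{1}{\card G}\sum_{g\in G}\card{\Cent_G(g)}=\frac{1}{\card\fg}\sum_{g\in\fg}\card{\Ker(\ad_\fg(g))}=\ask{\ad_\fg}.
\]
No orbit method, no coadjoint action, no correspondence between conjugacy classes and adjoint orbits is needed---only the pointwise equality of centralisers. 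Indeed, the paper explicitly remarks afterward that its proof is ``solely based on elementary linear algebra'' and that the Kirillov orbit method instead gives a \itemph{separate} interpretation of the identity $\concnt(G)=\ask{\ad_\fg^\MW}$ (namely, as the irreducible-character count). Your first paragraph thus describes a genuine alternative route to the $\MW$-side of the statement, but it is not the one taken, and your final paragraph's concern dissolves once you note that Burnside only needs centraliser sizes, not an orbit bijection.
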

\begin{proof}
  By Theorem~\ref{thm:ask_duality}, it suffices to prove the first identity.
  We may identify $G = \fg$ as sets.
  The group multiplication in $G$ is defined in terms of the Lie bracket
  of $\fg$ by means of the Hausdorff series.
  Conversely, we may recover sums and Lie brackets in $\fg$ from the group
  multiplication and group commutators in $G$ via the inverse Hausdorff
  series; see e.g.\ \cite[\S 10.2]{Khu98} for details.
  By inspection of the Hausdorff series and its inverses, one finds that
  two elements of the group $G$
  commute if and only if they commute as elements of the Lie algebra
  $\fg$.
  In particular, as noted in \cite[\S 3.1]{O'BV15},
  for $g\in G = \fg$, the group centraliser $\Cent_G(g)$
  coincides with the Lie centraliser $\mathfrak{c}_{\fg}(g)$.
  Since the latter centraliser coincides with the kernel of
  $\fg\xto{\ad_{\fg}(g)} \fg$,
  using the orbit-counting lemma, we obtain
  \[
    \concnt(G) = \frac 1{\card G} \sum_{g\in G}\card{\Cent_G(g)} = \frac
    1{\card \fg}\sum_{g\in \fg}{\card{\mathfrak c_\fg(g)}} =
    \ask{\ad_{\fg}}. \qedhere 
  \]
\end{proof}

If $\fg/\fz$ and $[\fg,\fg]$ are free as $R$-modules, then
Proposition~\ref{prop:ask_concnt} asserts that $\concnt(G)$ can be
understood in terms of the rank loci of either one of the two
commutator matrices
$A(\mathbf X)$ and $B(\mathbf Y)$ in the work of O'Brien and Voll
\cite{O'BV15}; see Remark~\ref{rem:comm_mat}.
In fact, as we will now explain, Proposition~\ref{prop:ask_concnt}
is an equivalent reformulation of the duality they discovered.
First, as in Remark~\ref{rem:comm_mat},  the adjoint representation of $\fg$
gives rise to a module representation $\fg/\fz \xto{\theta}
\Hom(\fg/\fz,[\fg,\fg])$;
note that $\ask{\ad_{\fg}} = \ask{\theta}\dtimes \card{\fz}$.
By \cite[Thm~A]{O'BV15},
\begin{equation}
  \label{eq:O'BV_concnt_S}
  \concnt(G) = \card{\mathcal S(G)} \dtimes \frac{\card{\fz}}{\card{[\fg,\fg]}}.
\end{equation}
It is easy to see that we may identify the set $\mathcal S(G)$ in
\cite{O'BV15} with our $\comm(\theta^\MW)$ from \S\ref{ss:bundles}.
Thus, using \eqref{eq:ask_comm}, equation \eqref{eq:O'BV_concnt_S} is
equivalent to
$\concnt(G) = \ask{\theta^\MW} \dtimes \card{\fz} = \ask{\ad_\fg^\MW}$.

In the same spirit, we may interpret the double counting argument leading to
the two expressions for $\concnt(G)$ in \cite[Thms~A--B]{O'BV15} as a proof
of $\concnt(G) = \ask{\ad_\fg^\MW} = \ask{\ad_\fg^{\MW\MV}}$;
this is simply another version of Proposition \ref{prop:ask_concnt} since,
by repeated application of Theorem~\ref{thm:ask_duality}, 
\[
\ask{\ad_\fg^{\MW\MV}} =
\ask{(\ad_\fg^\MV)^{\MV\MW\MV}} =
\ask{-\ad_\fg^{\VW}} = \ask{\ad_\fg}.
\]

In summary, we may thus regard Theorem~\ref{thm:ask_duality}
as a simultaneous generalisation of the ``Linial \& Weitz duality''
(part~(\ref{thm:ask_duality1}))
underpinning large parts of \cite{ask} as well as of the ``O'Brien \&
Voll duality'' (part~(\ref{thm:ask_duality3})) in Proposition~\ref{prop:ask_concnt}. 

\begin{rem}
  The key theoretical ingredients of \cite{O'BV15} are the Lazard
  correspondence and the Kirillov orbit method.
  While our proof of Proposition~\ref{prop:ask_concnt}
  was solely based on elementary linear algebra (via
  Theorem~\ref{thm:ask_duality}),
  as explained in \cite{O'BV15}, the Kirillov orbit method
  provides a group-theoretic interpretation of the identity $\concnt(G) =
  \ask{\ad_{\fg}^\MW}$: the right-hand side enumerates irreducible characters
  of $G$.
\end{rem}

For later use, we note that
Proposition~\ref{prop:ask_concnt} provides us with the following analogue
of Proposition~\ref{prop:cc_zeta_as_ask} for conjugacy class zeta functions
associated with group schemes.
For any ring $R$, let $\CALG(R)$ denote the category of associative,
commutative, and unital $R$-algebras.
Let $\GRP$ be the category of groups.

\begin{cor}
  \label{cor:cc_zeta_gpscheme}
  Let $\fO$ be a compact \DVR{} with residue field characteristic $p$.
  Let $\fg$ be a nilpotent Lie $\fO$-algebra of class $<p$ whose underlying
  $\fO$-module is free of finite rank.
  Then we obtain a group scheme $\CALG(\fO) \xto{\exp(\fg\otimes (\dtimes\,))}
  \GRP$
  and
  $\zeta_{\exp(\fg\otimes (\dtimes\,))}^\cc(s) = \zeta_{\ad_{\fg}}(s)$. \qed
\end{cor}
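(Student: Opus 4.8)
The plan is to confirm first that $\CALG(\fO)\xto{\exp(\fg\otimes(\dtimes\,))}\GRP$ genuinely is an affine group scheme of finite type over $\fO$, so that the left-hand side is defined, and then to compare both zeta functions coefficient by coefficient, the comparison being a termwise application of Proposition~\ref{prop:ask_concnt}.

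For the first step, I would fix an $\fO$-basis of $\fg$, of size $d=\rank_\fO\fg$, and note that for every $\fO$-algebra $A$ the $A$-module $\fg\otimes_\fO A$ is free of rank $d$ and inherits a Lie bracket of nilpotency class at most the class $c<p$ of $\fg$. Since $\fO$ is local with residue characteristic $p$, every rational prime $\ell\ne p$ is a unit in $\fO$, hence in $A$; as the denominators occurring in the degree-$n$ component of the Baker--Campbell--Hausdorff series are divisible only by primes $\le n$, the truncation of this series in degrees $\le c$ has all its denominators invertible in $A$ and therefore defines, in the chosen coordinates, a group law on $\fg\otimes_\fO A$ by polynomial maps, naturally in $A$. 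This is the exponential (``Lazard'') group scheme of the nilpotent Lie lattice $\fg$ in the sense of \cite[\S 2.1]{SV14}; it is represented by the polynomial ring $\fO[x_1,\dotsc,x_d]$ and is thus an affine group scheme $\sG$ of finite type over $\fO$. As $\fO$ is a compact \DVR{}, its ideals of finite index are precisely the powers $\fP^n$ ($n\ge 0$), with $\card{\fO/\fP^n}=q^n$; so $\fO$ has only finitely many ideals of each finite index, $\zeta^\cc_{\sG}(s)$ is defined, and $\zeta^\cc_{\sG}(s)=\sum_{n=0}^\infty\concnt\bigl(\exp(\fg\otimes_\fO\fO_n)\bigr)q^{-ns}$.

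For the second step, fix $n\ge 0$ and set $\fg_n:=\fg\otimes_\fO\fO_n$. Then $\fO_n$ is a finite truncated \DVR{}, $\fg_n$ is a finite nilpotent Lie $\fO_n$-algebra of class $\le c<p$, and $c!$ is coprime to $p$ and hence a unit in the local ring $\fO_n$; Proposition~\ref{prop:ask_concnt} therefore yields $\concnt(\exp(\fg_n))=\ask{\ad_{\fg_n}}$. Since $\fg$ is free of finite rank, Proposition~\ref{prop:fg_projective}(\ref{prop:projective_reflexive3}) identifies $\Hom_\fO(\fg,\fg)\otimes_\fO\fO_n$ with $\Hom_{\fO_n}(\fg_n,\fg_n)$, and under this identification the change of scalars $(\ad_\fg)^{\fO_n}$ becomes $\ad_{\fg_n}$ (the bracket of $\fg_n$ being that of $\fg$ after base change), so $(\ad_\fg)^{\fO_n}\approx\ad_{\fg_n}$. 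As $\ask{\dtimes\,}$ depends only on the isotopy class of its argument, $\ask{\ad_{\fg_n}}=\ask{(\ad_\fg)^{\fO_n}}$. Summing over $n$ gives
\[
\zeta^\cc_{\sG}(s)=\sum_{n=0}^\infty\ask{(\ad_\fg)^{\fO_n}}q^{-ns}=\Zeta_{\ad_\fg}(q^{-s})=\zeta_{\ad_\fg}(s).
\]
The only point requiring genuine care is the first step---verifying that the truncated Baker--Campbell--Hausdorff law is defined over $\fO$, i.e.\ that only primes $<p$ appear in its denominators up to degree $c$, so that it specialises to a group scheme over every $\fO$-algebra; everything afterwards is bookkeeping plus a termwise invocation of Proposition~\ref{prop:ask_concnt} together with base-change compatibility of the adjoint representation.
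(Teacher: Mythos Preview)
Your proposal is correct and follows essentially the same approach as the paper: the paper's proof is a two-line sketch, citing \cite[\S 2.1.2]{SV14} for the group scheme claim and invoking Proposition~\ref{prop:ask_concnt} for the zeta function identity, and you have simply spelled out the routine details underlying both of these steps (invertibility of BCH denominators in degrees $\le c<p$, and base-change compatibility $(\ad_\fg)^{\fO_n}\approx\ad_{\fg_n}$).
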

\begin{proof}
  The fact that $\exp(\fg\otimes (\dtimes\,))$ is a group scheme over $\fO$ is
  a minor variation of \cite[\S 2.1.2]{SV14}.
  The second claim follows from Proposition~\ref{prop:ask_concnt}.
\end{proof}

\section{Conjugacy classes of nilpotent groups II: alternation}
\label{s:cc_ask2}

We have seen that conjugacy class zeta functions associated
with certain group schemes are instances of (Dirichlet series attached to)
\Ask{} zeta functions (Corollary~\ref{cor:cc_zeta_gpscheme}).
Conversely, it is then a natural task to characterise those \Ask{} zeta
functions which arise as conjugacy class zeta functions (say, up to shifts
$s\to s+n$ or other simple transformations).
Such a characterisation seems out of reach.
Instead, in this section, we will see that, subject to mild assumptions,
\begin{itemize}
\item
  ``alternating'' module representations and
\item
  \Askm 2\! zeta functions associated with arbitrary module representations
\end{itemize}
always arise as conjugacy class zeta functions.
Throughout, let $R$ be a ring.

\subsection{Class numbers and alternating module representations}
\label{ss:alternating_gps}

A module representation $M \xto\theta \Hom(V,W)$ over $R$ is
\emph{alternating} if $M = V$ and $a\mul\theta a = 0$ for all $a \in M$.
Recall the definitions of $\CALG(R)$ and $\GRP$ from the end of \S\ref{ss:cc_zeta_duality}.
For $A\in \OBJ(\CALG(R))$, 
let $(\dtimes)_A$ denote the base change functor $\MOD(R) \to \MOD(A)$.

Let $M \xto\alpha\Hom(M,W)$ be an alternating module representation over $R$.
Define a functor $\CALG(R)\xto{\sG_\alpha}\GRP$ by endowing, for each $A\in\OBJ(\CALG(R))$,
the set $\sG_\alpha(A) := M_A \times W_A$ with the multiplication
\begin{align*}
  (a,y) \dtimes (a', y') & = \left(a + a', y + y' +
                               ({a}\mul\alpha{a'})
                           \right) 
\end{align*}
for $a,a'\in M_A$ and $y,y'\in W_A$;
define the action of $\sG_\alpha$ on homomorphisms to be the evident one.
Note that $\sG_\alpha(A)$ is a central extension of $W_A$ by $M_A$ and is thus
nilpotent of class at most $2$. 
For a group $H$ and $x,y\in H$, we write $[x,y] = [x,y]_H = x^{-1}y^{-1} xy$.
Clearly,
\begin{equation}
  \label{eq:Galpha_comm}
  (a,y)^{-1}
  = (-a,-y) \quad\text{and}\quad
  [(a,y), (a',y')]_{\sG_\alpha(A)}
   = (0, 2 (a \mul\alpha a')).
\end{equation}

\noindent Note that if $M$ and $W$ are free $R$-modules, 
then $\sG_\alpha$ is an affine group scheme over $R$.

\begin{lemma}
  \label{lem:concnt_Galpha}
  Suppose that $\sG_\alpha(A)$ is finite.
  Then
  $\concnt(\sG_\alpha(A)) = {\card{W_A}} \dtimes \ask{2\alpha^A}$.
\end{lemma}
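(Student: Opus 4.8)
The plan is to compute $\concnt(\sG_\alpha(A))$ directly via the orbit-counting (Burnside) lemma and then recognise the resulting sum as an instance of $\ask{\dtimes}$. Since $\sG_\alpha(A)$ is a central extension of $W_A$ by $M_A$, an element $(a,y)$ is central precisely when $a\mul\alpha a' = a'\mul\alpha a$ for all $a'$, i.e.\ (using $\alpha$ alternating, so that $a\mul\alpha a' = -a'\mul\alpha a$ and hence $a\mul\alpha a' = a'\mul\alpha a$ is equivalent to $2(a\mul\alpha a') = 0$) when $a\mul{2\alpha} a' = 0$ for all $a'\in M_A$. The key computation is therefore that the centraliser of $(a,y)$ in $\sG_\alpha(A)$ equals $\Ker\bigl(a(2\alpha^A)\bigr)\times W_A$: indeed, by \eqref{eq:Galpha_comm}, $[(a,y),(a',y')] = (0, 2(a\mul\alpha a'))$, which vanishes iff $a' \in \Ker(a(2\alpha^A))$, and this is independent of $y,y'$. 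Hence $\card{\Cent_{\sG_\alpha(A)}((a,y))} = \card{W_A}\dtimes\card{\Ker(a(2\alpha^A))}$.

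From here the orbit-counting lemma gives
\[
\concnt(\sG_\alpha(A)) = \frac{1}{\card{\sG_\alpha(A)}}\sum_{(a,y)\in \sG_\alpha(A)} \card{\Cent_{\sG_\alpha(A)}((a,y))}
= \frac{1}{\card{M_A}\dtimes\card{W_A}}\sum_{a\in M_A}\sum_{y\in W_A} \card{W_A}\dtimes\card{\Ker(a(2\alpha^A))},
\]
and the inner sum over $y$ just contributes a factor $\card{W_A}$, so after cancelling one factor of $\card{W_A}$ we obtain
\[
\concnt(\sG_\alpha(A)) = \card{W_A}\dtimes\frac{1}{\card{M_A}}\sum_{a\in M_A}\card{\Ker(a(2\alpha^A))} = \card{W_A}\dtimes\ask{2\alpha^A},
\]
using the definition of $\ask{\dtimes}$ together with the fact that $2\alpha^A$ has domain module $M_A$ (since $\Mod{\alpha} = M$ and change of scalars commutes with everything in sight, $\Mod{2\alpha^A} = M_A$). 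Note the use of $\Dom{2\alpha^A} = M_A$ as well, which is what makes the kernels $\Ker(a(2\alpha^A))$ the relevant objects.

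The only genuinely delicate point is the centraliser computation, and within it the passage between ``$(a,y)$ commutes with $(a',y')$'' and ``$a\in\Ker(a'(2\alpha^A))$''. One must be careful that $2(a\mul\alpha a') = 0$ is symmetric in $a$ and $a'$ — which it is, since $a\mul\alpha a' = -a'\mul\alpha a$ by alternation, so $2(a\mul\alpha a') = 0 \iff 2(a'\mul\alpha a) = 0$ — so that belonging to the relevant kernel is a symmetric relation, as it must be for a centraliser condition. Everything else (finiteness bookkeeping, the hypothesis that $\sG_\alpha(A)$ is finite, the harmless cancellation of $\card{W_A}$) is routine. I would also remark that $2\alpha^A = (2\alpha)^A$, so the statement really is about the module representation $2\alpha$ base-changed to $A$, consistent with the notation $\ask{2\alpha^A}$ in the lemma.
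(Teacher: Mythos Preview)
Your proof is correct and follows essentially the same approach as the paper: compute centralisers via the commutator formula \eqref{eq:Galpha_comm}, apply the orbit-counting lemma, and cancel. The paper's proof is a one-line version of exactly this computation; your additional remarks on the symmetry of the centraliser condition and on $2\alpha^A = (2\alpha)^A$ are accurate but not strictly needed.
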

\begin{proof}
  $\card{\sG_\alpha(A)} \dtimes \concnt(\sG_\alpha(A))
     = \sum\limits_{(a, y) \in \sG_\alpha(A)} \card{\Cent_{\sG_\alpha(A)}(a, y)}
     = \card{W_A}^2 \sum\limits_{a\in M_A} \card{\Ker(a (2\alpha^A)}$.
\end{proof}

\begin{cor}
  \label{cor:Galpha_cc_zeta}
  Let $\fO$ be a compact \DVR{} with residue field of size $q$.
  Let $M \xto\alpha \Hom(M,W)$ be an alternating module representation over $\fO$.
  Suppose that $M$ and~$W$ are free of finite ranks $\ell$ and $e$, respectively.
  Then $\displaystyle \zeta_{\sG_\alpha}^{\cc}(s) = \zeta_{2\alpha}(s-e)$. \qed
\end{cor}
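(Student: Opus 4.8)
The plan is to deduce the identity directly from Lemma~\ref{lem:concnt_Galpha} by unwinding the definition of the conjugacy class zeta function of a group scheme from \S\ref{ss:cc_zeta}; everything beyond that lemma is bookkeeping with cardinalities. First I would pin down the index set of the sum defining $\zeta_{\sG_\alpha}^{\cc}$. Since $\fO$ is a compact \DVR{}, it is an infinite principal ideal domain with a unique nonzero prime ideal, so its ideals of finite index are precisely the powers $\fP^n$ for $n \ge 0$, with $\card{\fO/\fP^n} = q^n$ (for $n = 0$ this is the zero ring $\fO_0 = \fO/\fO$); in particular $\fO$ is a legitimate base ring for $\zeta^{\cc}_{(\dtimes\,)}$. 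Specialising $\zeta_{\sG}^{\cc}(s) = \sum_\fa \concnt(\sG(R/\fa))\dtimes\card{R/\fa}^{-s}$ to $\sG = \sG_\alpha$ and $R = \fO$, the sum therefore collapses to $\zeta_{\sG_\alpha}^{\cc}(s) = \sum_{n=0}^{\infty} \concnt(\sG_\alpha(\fO_n))\, q^{-ns}$.

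Next I would evaluate each summand. Because $M$ and $W$ are free of finite rank over $\fO$ and $\fO_n$ is finite, each $\sG_\alpha(\fO_n) = M_{\fO_n} \times W_{\fO_n}$ is finite, so Lemma~\ref{lem:concnt_Galpha} applies and gives $\concnt(\sG_\alpha(\fO_n)) = \card{W_{\fO_n}}\dtimes\ask{2\alpha^{\fO_n}}$. As $W$ is free of rank $e$, we have $W_{\fO_n} \cong \fO_n^e$ and hence $\card{W_{\fO_n}} = q^{ne}$; moreover change of scalars is $\fO$-linear (see \S\ref{ss:ext}), so the symbol $2\alpha^{\fO_n}$ is unambiguous, being equal both to $(2\alpha)^{\fO_n}$ and to $2(\alpha^{\fO_n})$. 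Substituting and then invoking the definitions $\Zeta_{2\alpha}(T) = \sum_{n\ge 0}\ask{2\alpha^{\fO_n}}\,T^n$ and $\zeta_{2\alpha}(s) = \Zeta_{2\alpha}(q^{-s})$ from \S\ref{ss:ask_zeta} and \S\ref{ss:intrat}, I obtain
\[
\zeta_{\sG_\alpha}^{\cc}(s) = \sum_{n=0}^{\infty} q^{ne}\,\ask{2\alpha^{\fO_n}}\,q^{-ns} = \sum_{n=0}^{\infty} \ask{2\alpha^{\fO_n}}\,\bigl(q^{-(s-e)}\bigr)^{n} = \Zeta_{2\alpha}\bigl(q^{-(s-e)}\bigr) = \zeta_{2\alpha}(s-e),
\]
which is the assertion.

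I do not expect any serious obstacle here: the only two points that need (minor) care are the identification of the finite-index ideals of $\fO$ with the powers $\fP^n$, and the observation that the factor $\card{W_{\fO_n}} = q^{ne}$ produced by Lemma~\ref{lem:concnt_Galpha} is exactly what converts the exponent $-ns$ into $-n(s-e)$, i.e.\ effects the shift $s \mapsto s - e$. The substantive work has already been carried out in Lemma~\ref{lem:concnt_Galpha} (via the orbit-counting lemma and the commutator computation~\eqref{eq:Galpha_comm}), so the present corollary is purely formal.
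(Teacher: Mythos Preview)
Your argument is correct and is exactly what the paper intends: the corollary carries a \qed{} in its statement and no separate proof, signalling that it follows immediately from Lemma~\ref{lem:concnt_Galpha} together with the definitions of $\zeta_{\sG}^{\cc}$ and $\Zeta_{2\alpha}$, which is precisely what you have spelled out.
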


Hence, if $q$ is odd, then, up to integral shifts, 
every ask zeta function associated with an alternating module representation
(involving finitely generated free modules) over $\fO$ {is} a conjugacy class
zeta function;
note that Corollary~\ref{cor:cc_zeta_gpscheme} provides a partial converse.

\begin{ex}
  \label{ex:lins1}
  Let $\fO$ be a compact \DVR{} with residue field of odd size $q$.
  Write $V = \fO^d$.
  Let $V \xto\alpha \Hom(V,V\wedge V)$ be the alternating module
  representation over $\fO$ corresponding to the natural map
  $V\otimes V \to V \wedge V$ via the tensor-hom adjunction.
  Then $\sG_\alpha$ is a group scheme analogue of
  (the pro-$p$ completions of) the nilpotent groups of ``type $F$'' in the
  terminology of Stasinski and Voll~\cite[\S 1.3]{SV14}.
  Clearly, $\alpha^\MW$ is isotopic to $\So_d(\fO) \incl \Mat_d(\fO)$.
  By Corollary~\ref{cor:Galpha_cc_zeta},
  Corollary~\ref{cor:ask_zeta_duality}, and \cite[Prop.~5.11]{ask},
  $\Zeta_\alpha(T) = \Zeta_{\alpha^\MW}(T) =
  \Zeta_{\Mat_{d\times(d-1)}(\fO)}(T) = \frac{1-q^{1-d}T}{(1-T)(1-qT)}$
  whence
  \begin{equation}
    \label{eq:lins1}
  \zeta_{\sG_\alpha}^\cc(s) = \zeta_{\alpha}\left(s - \binom d 2\right)
  =
  \frac{1-q^{\binom {d-1} 2 - s}}{
    \Bigl(1-q^{\binom d 2 - s}\Bigr)
    \Bigl(1-q^{\binom d 2 +1- s}\Bigr)
  }.
  \end{equation}
  This was first proved by Lins~{\cite[Cor.\ 1.5, (1.4)]{Lin18b}}
  by computing a $\fP$-adic integral.
  In view of \cite[\S 5]{ask} and the preceding example,
  it seems remarkable how often natural instances of ask and conjugacy class
  zeta functions are of ``constant rank type'' in the sense that they arise by
  a specialisation $T \to q^n T$ of the formula in \eqref{eq:Kmin}
  (for $m=1$); see \S\ref{ss:cc_crk} below.
\end{ex}

\paragraph{Lie algebras.}
For a Lie-theoretic interpretation of $\sG_\alpha$,
let $\fg$ be a nilpotent Lie algebra of class $c$ over a ring $R$.
Suppose that multiplication by $c!$ is injective on $\fg$ and
that $[\fg,\fg]\subset c! \fg$.
By the Lazard correspondence, the Baker-Campbell-Hausdorff series
endows $\fg$ with the structure of a nilpotent group $\exp(\fg)$ of
class~$c$.

Let $M\xto\alpha\Hom(M,W)$ be an alternating module representation over $R$.
For $A \in \OBJ(\CALG(R))$,
we endow $\fg_\alpha(A) := M_A \oplus W_A$ with the structure of a nilpotent Lie $A$-algebra of
class at most $2$ by defining, for $a,a' \in M_A$ and $y,y'\in W_A$, 
$[ (a,y), (a', y')] = (0, a\mul\alpha{a'})$.

\begin{cor}
  \label{cor:altask_concnt}
  Let $M\xto\alpha \Hom(M,W)$ be an alternating module representation
  over~$R$.
  Suppose that $2 \in R^\times$ and
  let $A\in\OBJ(\CALG(R))$. Then:
  \begin{enumerate}
  \item
    \label{prop:altask_concnt1}
    $\sG_\alpha(A) \approx \exp(\fg_{2\alpha}(A))$.
  \item
    \label{prop:altask_concnt2}
    If $\sG_\alpha(A)$ is finite, then $\concnt(\sG_\alpha(A)) = \ask{\ad_{\bm\fg_{\alpha}(A)}}$.
  \end{enumerate}
\end{cor}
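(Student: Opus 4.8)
The plan is to prove part~(\ref{prop:altask_concnt1}) by unwinding the Baker--Campbell--Hausdorff series for Lie algebras of nilpotency class at most~$2$, and then to deduce part~(\ref{prop:altask_concnt2}) from it via Proposition~\ref{prop:ask_concnt} together with a rescaling argument.

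For part~(\ref{prop:altask_concnt1}): since $2\in R^\times$, its image in any $A\in\OBJ(\CALG(R))$ is a unit, so every $A$-module is uniquely $2$-divisible. The Lie $A$-algebra $\fg_{2\alpha}(A) = M_A\oplus W_A$ therefore satisfies the hypotheses under which the Lazard correspondence is set up in the ``Lie algebras'' paragraph preceding the corollary: it has class at most~$2$, multiplication by $2$ is injective on it, and $[\fg_{2\alpha}(A),\fg_{2\alpha}(A)]\subseteq\{0\}\oplus W_A = 2\bigl(\{0\}\oplus W_A\bigr)\subseteq 2\,\fg_{2\alpha}(A)$; hence $\exp(\fg_{2\alpha}(A))$ is defined, with underlying set $M_A\oplus W_A$. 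For a Lie algebra of class at most~$2$ the Hausdorff series collapses to $X\dtimes Y = X+Y+\tfrac12[X,Y]$. Substituting the bracket $[(a,y),(a',y')] = (0,\,2(a\mul\alpha a'))$ of $\fg_{2\alpha}(A)$ gives $(a,y)\dtimes(a',y') = (a+a',\,y+y'+(a\mul\alpha a'))$, which is precisely the multiplication defining $\sG_\alpha(A)$. Thus $\sG_\alpha(A)$ and $\exp(\fg_{2\alpha}(A))$ have the same underlying set and the same multiplication, which proves part~(\ref{prop:altask_concnt1}); no finiteness is needed here.

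For part~(\ref{prop:altask_concnt2}): assume $\sG_\alpha(A)$ is finite, i.e.\ $M_A$ and $W_A$ are finite. Then $\fg_{2\alpha}(A)$ is a finite nilpotent Lie $A$-algebra of class $c\le 2$ with $c!\in A^\times$, so Proposition~\ref{prop:ask_concnt}---whose proof uses only the applicability of the Lazard correspondence and the orbit-counting lemma, and hence applies with $A$ in place of $R$---together with part~(\ref{prop:altask_concnt1}) gives
\[
\concnt(\sG_\alpha(A)) = \concnt\bigl(\exp(\fg_{2\alpha}(A))\bigr) = \ask{\ad_{\fg_{2\alpha}(A)}}.
\]
It remains to replace $\fg_{2\alpha}(A)$ by $\fg_\alpha(A)$. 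The map $(a,y)\mapsto (a,2y)$ is an isomorphism of Lie $A$-algebras $\fg_\alpha(A)\xto{\sim}\fg_{2\alpha}(A)$: it is bijective because $2\in A^\times$, and it intertwines the brackets because that of $\fg_{2\alpha}(A)$ is twice that of $\fg_\alpha(A)$. Hence $\ad_{\fg_\alpha(A)}\approx\ad_{\fg_{2\alpha}(A)}$, and since $\ask{\dtimes\,}$ depends only on the isotopy class we obtain $\ask{\ad_{\fg_{2\alpha}(A)}} = \ask{\ad_{\fg_\alpha(A)}}$, completing the proof.

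I do not expect a genuine obstacle here; the only point requiring care is keeping track of the factor~$2$---first to license the Lazard correspondence for $\fg_{2\alpha}(A)$ (exactly the role of the hypothesis $2\in R^\times$), and then in the final rescaling identifying $\ad_{\fg_{2\alpha}(A)}$ with $\ad_{\fg_\alpha(A)}$ up to isotopy. As an alternative to invoking Proposition~\ref{prop:ask_concnt}, part~(\ref{prop:altask_concnt2}) may be obtained directly from Lemma~\ref{lem:concnt_Galpha}: it yields $\concnt(\sG_\alpha(A)) = \card{W_A}\,\ask{2\alpha^A} = \card{W_A}\,\ask{\alpha^A}$ (using $2\alpha^A\approx\alpha^A$), while a short computation shows that the centraliser of $(a,y)$ in $\fg_\alpha(A)$ equals $\Ker(a\alpha^A)\oplus W_A$, so that $\ask{\ad_{\fg_\alpha(A)}} = \card{W_A}\,\ask{\alpha^A}$ as well.
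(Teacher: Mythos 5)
Your proof is correct and takes essentially the same route as the paper: part (i) by inspecting the Hausdorff series for nilpotency class at most $2$, and part (ii) by invoking Proposition~\ref{prop:ask_concnt} via part (i). The additional care you take---checking that the first identity of Proposition~\ref{prop:ask_concnt} applies over $A$ and supplying the rescaling isomorphism $\fg_\alpha(A)\to\fg_{2\alpha}(A)$, $(a,y)\mapsto(a,2y)$---merely fills in details the paper leaves implicit.
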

\begin{proof}
  Part (\ref{prop:altask_concnt1}) follows by inspection from the
  Hausdorff series
  $\log(\exp(X)\exp(Y)) = X + Y + \frac 1 2[X,Y] + \dotsb$
  and (\ref{prop:altask_concnt2}) follows from Proposition~\ref{prop:ask_concnt}.
\end{proof}

\begin{rem}
    Identical or similar constructions of group functors attached to
    alternating module representations have often appeared in the
    literature; see e.g.\ \cite[\S 1.3]{SV14}.
\end{rem}

\subsection{Alternating hulls of module representations}

Recall the notation from \S\ref{ss:addition}.
Give any module representation $M\xto\theta\Hom(V,W)$ over $R$,
its \emph{alternating hull} $\sk\theta$ is the composite
\[
V \oplus M \xto\tau M \oplus V \xto{\theta\oplus(-\theta^\MV)} \Hom(V\oplus M,
W\oplus W) \xto{\Hom(V\oplus M,\Sigma_W)} \Hom(V\oplus M, W),
\]
where $V \oplus M \xto\tau M \oplus V$ is the canonical isomorphism.
Hence, for $a,a'\in M$ and $x,x' \in V$,
\[
(x,a) \mul{\sk\theta}(x',a') = x\mul\theta a' - x'\mul\theta a.
\]
Note that $\sk{\dtimes\,}$ commutes with base change:
$\sk{\theta}^\phi =
\sk{\theta^\phi}$ for each ring map $R\xto\phi \tilde R$. 

\begin{rem}
  \label{rem:sk}
  \quad
  \begin{enumerate}
  \item
    \label{rem:sk1}
    $\sk\theta$ coincides with Wilson's bimap $\#$ derived from a bimap
    $*$ in \cite[\S 9.2]{Wil17}.
  \item
    \label{rem:sk2}
    Suppose that $M$, $V$, and $W$ are free $R$-modules of finite ranks
    and choose bases $\cA$, $\cX$, and $\cY$, respectively, of these modules.
    Recall the notation for matrices associated with module representations
    from \S\ref{ss:matrices}.
    After relabelling, we may assume that $M(\theta;\cA,\cX,\cY)$ and $M(\theta^\MV;\cX,\cA,\cY)$
    are matrices of linear forms in disjoint sets of variables.
    We may then identify
    \[
    M(\sk\theta;\cA\!\amalg\!\cX,\cA\!\amalg\!\cX,\cY) = 
    \begin{bmatrix}
      \phantom.M(\theta;\cA,\cX,\cY) \\
      -M(\theta^\MV;\cX,\cA,\cY)
    \end{bmatrix}.
    \]
  \end{enumerate}
\end{rem}

\subsection{Class numbers and general module representations}
\label{ss:general_gps}
\label{ss:gps_from_mreps}

For a module representation $\theta$ over $R$, we construct 
group functors whose associated class numbers are related to average sizes of kernels derived from $\theta$.

\paragraph{First construction.}
Let $M\xto\theta\Hom(V,W)$ be a module representation over $R$.
By Lemma~\ref{lem:concnt_Galpha},
\begin{equation}
  \label{eq:concnt_Gsk}
  \concnt(\sG_{\sk\theta}(A)) = \card{W_A} \dtimes \ask{2\sk{\theta^A}}
\end{equation}
for $A\in \CALG(R)$ whenever the group on the left-hand side is finite.

\paragraph{Second construction.}
The following construction appeared in work of Grunewald and
O'Halloran~\cite[Prop.\ 2.4]{GO'H85},
Boston and Isaacs~\cite[\S 2]{BI04}, and Wilson~\cite[\S 9]{Wil17}
(where it was referred to as ``folklore'').
Let $M\xto\theta \Hom(V,W)$ be a module representation over $R$.
The abelian group $M$ acts on $V\oplus W$ via
\begin{align*}
  (x,y). a & := (x,x (a\theta) + y) & (a\in M, x\in V,y\in W)
\end{align*}
and this action extends to define an action of $M_A$ on $(V\oplus
W)_A = V_A \oplus W_A$ for each $A\in \CALG(R)$.
Let $\sH_\theta(A) = M_A \ltimes (V\oplus W)_A$ be the
associated semidirect product and extend $\sH_\theta$ to a functor
$\CALG(R)\to\GRP$ in the evident way.

For the purpose of relating class numbers and average sizes of kernels, 
whenever $2$ is invertible in $R$ and in view of \eqref{eq:concnt_Gsk},
the group functors $\sG_{\sk\theta}$ and $\sH_\theta$ are interchangeable:

\begin{lemma}
  \label{lem:concnt_Htheta}
  Let $A \in \OBJ(\CALG(R))$ and suppose that
  $\sH_\theta(A)$ is finite.
  Then $$\concnt(\sH_\theta(A)) = \card{W_A} \dtimes \ask{\sk{\theta^A}}.$$
\end{lemma}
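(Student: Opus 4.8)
The plan is to compute $\card{\sH_\theta(A)}\dtimes\concnt(\sH_\theta(A))$ by the orbit-counting (Burnside) lemma, exactly as in the proof of Lemma~\ref{lem:concnt_Galpha}, and then recognise the resulting sum as $\card{W_A}^2\dtimes\ask{\sk{\theta^A}}$. Write $N = (V\oplus W)_A = V_A\oplus W_A$, so that $\sH_\theta(A) = M_A\ltimes N$ with $M_A$ acting by $(x,y).a = (x, x(a\theta^A)+y)$. An element of $\sH_\theta(A)$ is a pair $(a,(x,y))$ with $a\in M_A$, $(x,y)\in N$. The first step is to identify the centraliser of such an element. Since $N$ is abelian and normal and the quotient $M_A$ is abelian, a standard computation with semidirect products shows that $(a',(x',y'))$ centralises $(a,(x,y))$ if and only if two conditions hold: $a'$ fixes $(x,y)$ under the action (equivalently $x(a'\theta^A) = 0$, i.e.\ $x\mul{\theta^A} a' = 0$) and $a$ fixes $(x',y')$ (equivalently $x'\mul{\theta^A} a = 0$), together with the commutativity of the two ``$x$-twists'', which here reduces to $x(a'\theta^A) = x'(a\theta^A)$ being automatically satisfied once the two vanishing conditions hold. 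So the centraliser condition is symmetric: $(a,(x,y))$ and $(a',(x',y'))$ commute iff $x\mul{\theta^A} a' = 0$ and $x'\mul{\theta^A} a = 0$, with $y,y'$ unconstrained.

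Given this, summing over the group,
\[
  \card{\sH_\theta(A)}\dtimes\concnt(\sH_\theta(A))
  = \sum_{a\in M_A}\ \sum_{x\in V_A}\ \sum_{y\in W_A}
      \card[\big]{\Cent_{\sH_\theta(A)}(a,(x,y))}.
\]
The inner cardinality does not depend on $y$, so this is $\card{W_A}$ times
$\sum_{a\in M_A}\sum_{x\in V_A}\card[\big]{\{(a',x')\in M_A\times V_A : x\mul{\theta^A}a' = 0,\ x'\mul{\theta^A}a = 0\}}\dtimes\card{W_A}$,
where the extra $\card{W_A}$ counts the free choice of $y'$. The set in braces is a product, so the count factors as $\card{\{a'\in M_A : x\mul{\theta^A}a' = 0\}}\dtimes\card{\{x'\in V_A : x'\mul{\theta^A}a = 0\}} = \card{\Ker((x)\theta^{A,\MV})}\dtimes\card{\Ker(a\theta^A)}$. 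Hence
\[
  \card{\sH_\theta(A)}\dtimes\concnt(\sH_\theta(A))
  = \card{W_A}^2 \sum_{a\in M_A}\sum_{x\in V_A}
      \card[\big]{\Ker((x)\theta^{A,\MV})}\dtimes\card[\big]{\Ker(a\theta^A)}.
\]

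The final step is to recognise the double sum as $\card{(V\oplus M)_A}\dtimes\ask{\sk{\theta^A}}$. By definition of the alternating hull, for $(x,a),(x',a')\in V_A\oplus M_A$ one has $(x,a)\mul{\sk{\theta^A}}(x',a') = x\mul{\theta^A}a' - x'\mul{\theta^A}a$; thus the kernel of $(x,a)$ acting via $\sk{\theta^A}$ consists of those $(x',a')$ with $x\mul{\theta^A}a' = 0$ and $x'\mul{\theta^A}a = 0$, which (again because it is a product condition) has cardinality $\card{\Ker((x)\theta^{A,\MV})}\dtimes\card{\Ker(a\theta^A)}$. Therefore the double sum above equals $\sum_{(x,a)}\card{\Ker((x,a)\sk{\theta^A})} = \card{\Mod{\sk{\theta^A}}}\dtimes\ask{\sk{\theta^A}} = \card{V_A}\dtimes\card{M_A}\dtimes\ask{\sk{\theta^A}}$. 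Since $\card{\sH_\theta(A)} = \card{M_A}\dtimes\card{V_A}\dtimes\card{W_A}$, dividing gives $\concnt(\sH_\theta(A)) = \card{W_A}\dtimes\ask{\sk{\theta^A}}$, as claimed. Alternatively, once \eqref{eq:concnt_Gsk} is in hand, one can instead argue that $\sH_\theta(A)$ and $\sG_{\sk\theta}(A)$ have the same class number up to the discrepancy between $\sk{\theta^A}$ and $2\sk{\theta^A}$ (harmless when $2\in A^\times$), but this requires comparing the two groups carefully, so the direct Burnside computation seems cleaner.

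The main obstacle I anticipate is the bookkeeping in the centraliser computation inside the semidirect product: one must verify that the ``cross term'' $x(a'\theta^A) = x'(a\theta^A)$ imposes no extra constraint beyond the two vanishing conditions, and keep straight which variable is being summed over versus quantified inside the centraliser. Everything else — the factoring of the product counts, the identification of one factor with a kernel of $\theta^{\MV}$, and the matching with $\sk\theta$ — is routine linear algebra of the kind already used in Lemma~\ref{lem:concnt_Galpha} and in Proposition~\ref{prop:mrep_bundle}(\ref{prop:mrep_bundle1}).
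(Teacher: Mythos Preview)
Your overall strategy matches the paper's: apply the orbit-counting lemma and identify the centraliser condition with the kernel of $\sk{\theta^A}$. But your centraliser computation is wrong, and the error propagates.

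In the semidirect product $\sH_\theta(A)=M_A\ltimes(V_A\oplus W_A)$ with both factors abelian, two elements $(a,(x,y))$ and $(a',(x',y'))$ commute precisely when $(x,y).a' - (x,y) = (x',y').a - (x',y')$, i.e.\ when the \itemph{single} condition
\[
x\mul{\theta^A}a' \;=\; x'\mul{\theta^A}a
\]
holds. Equivalently, the commutator (as the paper records, citing \cite{GO'H85}) is $(0,0,(x,a)\mul{\sk{\theta^A}}(x',a'))$. There are no separate ``$a'$ fixes $(x,y)$'' and ``$a$ fixes $(x',y')$'' conditions; those would require each side to vanish individually, which is strictly stronger. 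Consequently your product formula
\(
\card{\Cent(a,(x,y))}=\card{W_A}\dtimes\card{\Ker(x\theta^{A,\MV})}\dtimes\card{\Ker(a\theta^A)}
\)
is false in general. (Concretely: the fibre product $\{(a',x'):f(a')=g(x')\}$ has size $\card{M_A}\card{V_A}/\card{\Img f+\Img g}$, which equals $\card{\Ker f}\dtimes\card{\Ker g}$ only when $\Img f\cap\Img g=0$.) The same error reappears when you claim that $\Ker((x,a)\sk{\theta^A})$ factors as a product; it does not.

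The two mistakes are the \itemph{same} mistake, so when you equate them at the end you recover the correct statement --- but the displayed intermediate identity with the factored double sum is not valid. The fix is to drop the factoring entirely: directly observe
\[
\Cent_{\sH_\theta(A)}(a,(x,y)) \;=\; \Ker\bigl((x,a)\sk{\theta^A}\bigr)\times W_A,
\]
then sum and divide, exactly as in Lemma~\ref{lem:concnt_Galpha}. This is what the paper does (in one line, via the commutator formula).
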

\begin{proof}
  Identify $\sH_\theta(A) = M_A\times V_A\times W_A$ as sets.
  By a simple calculation (see \cite[Prop.~2.4]{GO'H85}),
  for $a,a'\in M_A$, $x,x'\in V_A$, and $y,y'\in W_A$,
   \[
   [(a,x,y),(a',x',y')]_{\sH_\theta(A)}
   = (0,0,x (a'\theta) - x'(a\theta))
   = (0,0,(x,a) \mul{\sk\theta}(x',a')).
   \]
   The rest of the proof is then analogous to that of Lemma~\ref{lem:concnt_Galpha}.
\end{proof}

\begin{cor}
  \label{cor:ask_zeta_GH}
  Let $\fO$ be a compact \DVR{}.
  Let $M\xto\theta\Hom(V,W)$ be a module representation over $\fO$.
  Suppose that $M$, $V$, and $W$ are free of finite ranks $\ell$, $d$, and $e$,
  respectively.
  Then $\zeta_{\sG_{\sk\theta}}^\cc(s) = \zeta_{2\sk\theta}(s-e)$ and
  $\zeta_{\sH_\theta}^{\cc}(s) = \zeta_{\sk\theta}(s-e)$. \qed
\end{cor}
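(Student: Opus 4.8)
The plan is to obtain both identities as a routine consequence of Lemma~\ref{lem:concnt_Galpha} (equivalently~\eqref{eq:concnt_Gsk}) and Lemma~\ref{lem:concnt_Htheta} by unwinding the definitions of the two zeta functions involved. First I would note that, since $M$, $V$, and $W$ are free of finite ranks, the underlying modules $\Mod{\sk\theta} = V\oplus M$ and $\Cod{\sk\theta} = W$ of the alternating hull $\sk\theta$ are free of finite ranks $d+\ell$ and $e$, respectively; hence $\sG_{\sk\theta}$ is an affine group scheme of finite type over $\fO$ by the observation following \eqref{eq:Galpha_comm}, and the semidirect-product construction of \S\ref{ss:general_gps} likewise yields an affine group scheme of finite type $\sH_\theta$. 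Thus $\zeta^\cc_{\sG_{\sk\theta}}(s)$ and $\zeta^\cc_{\sH_\theta}(s)$ are defined as in \S\ref{ss:cc_zeta}, and the compact \DVR{} $\fO$ satisfies the finiteness hypothesis imposed there (it has exactly one ideal of each finite index).

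Next I would make the index set explicit: the finite-index ideals of $\fO$ are precisely the powers $\fP^n$ for $n\ge 0$, with $\fO/\fP^n = \fO_n$ and $\card{\fO_n} = q^n$ in the notation of \S\ref{ss:ask_zeta}. Hence for any group scheme $\sG$ of finite type over $\fO$,
\[
\zeta^\cc_{\sG}(s) = \sum_{n=0}^\infty \concnt(\sG(\fO_n))\, q^{-ns},
\]
an identity of (formal, or convergent for $\Real(s)$ large) Dirichlet series in $q^{-s}$. Since $\fO_n$ is finite and $M$, $V$, $W$ are finitely generated, the groups $\sG_{\sk\theta}(\fO_n)$ and $\sH_\theta(\fO_n)$ are finite, so Lemmas~\ref{lem:concnt_Galpha}--\ref{lem:concnt_Htheta} apply term by term with $A = \fO_n$.

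Finally I would combine these facts. Because $W$ is free of rank $e$ we have $\card{W_{\fO_n}} = q^{en}$, and because the alternating-hull construction commutes with base change we have $(\sk\theta)^{\fO_n} = \sk{\theta^{\fO_n}}$, and likewise $(2\sk\theta)^{\fO_n} = 2\sk{\theta^{\fO_n}}$. Lemma~\ref{lem:concnt_Htheta} therefore gives $\concnt(\sH_\theta(\fO_n)) = q^{en}\,\ask{(\sk\theta)^{\fO_n}}$, whence
\begin{align*}
  \zeta^\cc_{\sH_\theta}(s)
  &= \sum_{n=0}^\infty q^{en}\,\ask{(\sk\theta)^{\fO_n}}\, q^{-ns}
   = \sum_{n=0}^\infty \ask{(\sk\theta)^{\fO_n}}\, \bigl(q^{-(s-e)}\bigr)^n \\
  &= \Zeta_{\sk\theta}\bigl(q^{-(s-e)}\bigr)
   = \zeta_{\sk\theta}(s-e).
\end{align*}
The other identity is obtained in exactly the same way from \eqref{eq:concnt_Gsk} (i.e.\ Lemma~\ref{lem:concnt_Galpha} with $\alpha = \sk\theta$), with $\sk\theta$ replaced by $2\sk\theta$ throughout, yielding $\zeta^\cc_{\sG_{\sk\theta}}(s) = \Zeta_{2\sk\theta}(q^{-(s-e)}) = \zeta_{2\sk\theta}(s-e)$.

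There is no genuine difficulty here; the statement is essentially a repackaging of the two preceding lemmas. The only points needing (entirely routine) care are: verifying that $\sG_{\sk\theta}$ and $\sH_\theta$ really are group schemes of finite type over $\fO$, so that \S\ref{ss:cc_zeta} applies and evaluation at $\fO_n$ produces the relevant finite groups; matching the change-of-scalars functor used in the definition of $\Zeta_\theta$ with evaluation of the group scheme at $\fO_n$; and keeping track of the shift $s\mapsto s-e$ produced by the factor $\card{W_{\fO_n}} = q^{en}$. Should one prefer to avoid the scheme-theoretic language entirely, the same computation runs verbatim using only $\fO_n$-points and Lemmas~\ref{lem:concnt_Galpha}--\ref{lem:concnt_Htheta}.
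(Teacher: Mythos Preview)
Your proposal is correct and is exactly the routine unwinding that the paper's bare \qed{} indicates: the corollary follows immediately from \eqref{eq:concnt_Gsk} and Lemma~\ref{lem:concnt_Htheta} together with the definitions of $\Zeta_\theta$ and $\zeta^\cc_{\sG}$, using $\card{W_{\fO_n}} = q^{en}$ and the fact that $\sk{\dtimes\,}$ commutes with base change. There is nothing to add.
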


\subsection{Average sizes of kernels of alternating hulls}

In order to make use of \eqref{eq:concnt_Gsk},
Lemma~\ref{lem:concnt_Htheta}, and Corollary~\ref{cor:ask_zeta_GH},
we need to understand the effect of the operation $\sk{\dtimes\,}$
 on average sizes of kernels.
The crucial observation here is the following.

\begin{thm}
  \label{thm:ask_sk}
  Let $R$ be a finite truncated \DVR{}.
  Let $M \xto\theta\Hom(V,W)$ be a module representation over $R$.
  Suppose that $M$, $V$, and $W$ are finite.
  Then
  \[
  \ask{\sk\theta} = \frac{\card M}{\card V} \dtimes \askm {\theta^\MW} 2.
  \]
\end{thm}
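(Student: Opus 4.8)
The core identity to establish is that, for a finite truncated \DVR{} $R$ and a module representation $M\xto\theta\Hom(V,W)$ with $M,V,W$ finite, one has $\ask{\sk\theta}=\frac{\card M}{\card V}\askm{\theta^\MW}2$. My strategy is to compute $\ask{\sk\theta}$ directly from the definition, recognise the inner sum as a product of two kernel sizes, and then invoke the duality theorem already proved. First I would write, using \eqref{eq:ask_comm} applied to $\sk\theta$, that $\ask{\sk\theta}=\card{\KER(\sk\theta)}/\card{\Mod{\sk\theta}}$, where $\Mod{\sk\theta}=M\oplus V$, so that the denominator is $\card M\dtimes\card V$. The numerator is $\sum_{(x,a)\in V\oplus M}\card{\Ker\bigl((x,a)\sk\theta\bigr)}$. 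The point of the next step is to identify $\Ker\bigl((x,a)\sk\theta\bigr)$ as a subset of $V\oplus M$: by the explicit formula $(x,a)\mul{\sk\theta}(x',a')=x\mul\theta a'-x'\mul\theta a$ from the definition of $\sk\theta$, the pair $(x',a')$ lies in this kernel exactly when $x'\mul\theta a=x\mul\theta a'$, i.e.\ when $a'\in\Ker(x\theta^\MV)$ \emph{and} (independently) $x'\in\{x'\in V: x'\mul\theta a=x\mul\theta a'\text{ for the chosen }a'\}$ — this needs a little care and is where I expect the bookkeeping to bite.

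More precisely, I would fix $(x,a)$ and stratify the kernel by the value $a'\in M$. For each $a'$ with $a'\theta^\MV\ni$ ... rather: the condition is $x\mul\theta a'=x'\mul\theta a$. For this to be solvable in $x'$ we need $x\mul\theta a'\in\Img(a\theta)$; when solvable, the solution set for $x'$ is a coset of $\Ker(a\theta)$, hence has size $\card{\Ker(a\theta)}$. So $\card{\Ker((x,a)\sk\theta)}=\card{\Ker(a\theta)}\dtimes\#\{a'\in M: x\mul\theta a'\in\Img(a\theta)\}$. Summing over $(x,a)$ and then over $x$ first, the second factor becomes $\sum_{x\in V}\#\{a'\in M: x(a'\theta)\in\Img(a\theta)\}$. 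Here I would pivot to the $\MV$-picture: $x(a'\theta)=a'(x\theta^\MV)$, so this counts pairs via the map $x\theta^\MV\colon M\to W$, and the condition $a'(x\theta^\MV)\in\Img(a\theta)$ says $a'$ lies in the preimage under $x\theta^\MV$ of the subgroup $\Img(a\theta)$. Rather than push through $\MV$, the cleaner route — and the one I would actually take — is to recognise the whole double sum as counting the bundle $\comm$ of a collapsed sum and then apply Lemma~\ref{lem:askm_coll} together with the $\MW$-duality of Theorem~\ref{thm:ask_duality}(\ref{thm:ask_duality3}).

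Here is that cleaner route spelled out. The alternating hull $\sk\theta$ is, by its very definition as a collapsed sum, of the form $\Sigma$-collapse of $\theta\oplus(-\theta^\MV)$ after swapping factors; concretely $\sk\theta$ has module $V\oplus M$ and is built from $\theta$ and $-\theta^\MV$ which share the codomain $W$. I would therefore relate $\ask{\sk\theta}$ to $\ask{\theta^\MV\oplus\text{(collapsed thing)}}$... Actually the most economical plan: apply $\MW$ to $\sk\theta$. Using Proposition~\ref{prop:coll_MW}(\ref{prop:coll_MW2}) — the collapse there is along a $\Sigma_W$, exactly the shape of $\sk\theta$ — one gets $(\sk\theta)^\MW=\Delta_{W^*}\dtimes\bigl(\theta^\MW\oplus(-\theta^\MV)^\MW\bigr)$, and $(-\theta^\MV)^\MW=(\theta^\MV)^\MW$ up to sign, with $(\theta^\MV)^\MW=\theta^{\MV\MW}\approx\theta^{\MW\MV}$ by Lemma~\ref{lem:conj_dual}, whose domain is again $M$... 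I need $(\sk\theta)^\MW$ to be a \emph{collapsed square} $\coll{(\theta^\MW)}2$ up to isotopy, so that Lemma~\ref{lem:askm_coll} gives $\ask{(\sk\theta)^\MW}=\askm{\theta^\MW}2$. Establishing this isotopy — matching $(\theta^\MV)^\MW$ with $\theta^\MW$ via the identity $\theta^{\MV\MW}\approx\theta^{\MW\MV}$ or via a sign — is the main obstacle, and I would do it by a short matrix computation as licensed by Remark~\ref{rem:sk}(\ref{rem:sk2}): in coordinates $M(\sk\theta;\cdots)=\left[\begin{smallmatrix}M(\theta;\cA,\cX,\cY)\\-M(\theta^\MV;\cX,\cA,\cY)\end{smallmatrix}\right]$, and applying $\MW$ (transpose-and-relabel, Proposition~\ref{prop:matrix_duality}) turns the stacked matrix into $\Delta$-collapsed block-diagonal $[M(\theta^\MW)\mid M(\theta^\MW)]$ up to signs, since transposing a vertically stacked pair of blocks with disjoint variables and then collapsing the shared index yields two isotopic copies of $\theta^\MW$. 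Granting that isotopy $(\sk\theta)^\MW\approx\coll{(\theta^\MW)}2$, the proof finishes in two lines: by Theorem~\ref{thm:ask_duality}(\ref{thm:ask_duality1}) (with the roles of the modules of $\sk\theta$), $\ask{\sk\theta}=\frac{\card{\Mod{\sk\theta}}}{\card{\Dom{\sk\theta}}}\ask{(\sk\theta)^\MW}$ — wait, it's $\MW$ that's weight-free — so rather $\ask{\sk\theta}=\ask{(\sk\theta)^{\MW}}$ by part (\ref{thm:ask_duality3})? No: $\sk\theta$ has $\Mod{}=V\oplus M$ and $\Dom{}=V\oplus M$ and $\Cod{}=W$, and its $\MW$-dual has $\Mod{}=W^*$. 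The weight-free statement Theorem~\ref{thm:ask_duality}(\ref{thm:ask_duality3}) gives $\ask{(\sk\theta)^\MW}=\ask{\sk\theta}$ directly. Hence $\ask{\sk\theta}=\ask{(\sk\theta)^\MW}=\ask{\coll{(\theta^\MW)}2}=\askm{\theta^\MW}2$ by Lemma~\ref{lem:askm_coll}. But the claimed identity carries a factor $\card M/\card V$, so in fact the correct collapse must be one of the \emph{other} two in Proposition~\ref{prop:coll_MW} (the $\Delta_V$ one, part (\ref{prop:coll_MW3}), or the $\Delta_M$ one), which moves the collapsed module off the weight-neutral slot; tracking which slot gets collapsed — and thereby producing the $\card M/\card V$ via Theorem~\ref{thm:ask_duality}(\ref{thm:ask_duality1}) or (\ref{thm:ask_duality2}) — is exactly the bookkeeping I flagged as the crux, and I would resolve it by the coordinate computation of Remark~\ref{rem:sk}(\ref{rem:sk2}), which makes transparent which index is doubled and which is shared.
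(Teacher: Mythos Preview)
Your overall strategy---apply $\MW$ to $\sk\theta$, use Proposition~\ref{prop:coll_MW}(\ref{prop:coll_MW2}) to identify $(\sk\theta)^\MW$ as a $\Delta_{W^*}$-collapse, then invoke Lemma~\ref{lem:askm_coll} and Theorem~\ref{thm:ask_duality}(\ref{thm:ask_duality3})---is exactly the paper's route. But there is a genuine gap at the step you flag as ``the crux'', and your proposed resolution is wrong.

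The collapsed sum you obtain is $(\sk\theta)^\MW \approx \Delta_{W^*}\dtimes(\theta^\MW \oplus (-\theta^{\MV\MW}))$, \emph{not} $\coll{(\theta^\MW)}2$. These are not isotopic: $\theta^\MW$ is a map $W^*\to\Hom(V,M^*)$ while $\theta^{\MV\MW}$ is a map $W^*\to\Hom(M,V^*)$, so the underlying modules do not even match unless $V=M$. The coordinate computation you invoke confirms this rather than rescuing your claim: the matrix of $(\sk\theta)^\MW$ has off-diagonal blocks $M(\theta^\MW;\dotsc)$ and $-M(\theta^\MW;\dotsc)^\top$, i.e.\ $\theta^\MW$ and its \emph{transpose}, not two copies of $\theta^\MW$. (This is the ``doubling'' operation of Ilic--Landsberg, not a collapsed square.) Consequently Lemma~\ref{lem:askm_coll} gives
\[
\ask{(\sk\theta)^\MW} = \frac 1{\card{W^*}}\sum_{\psi\in W^*}\card{\Ker(\psi\theta^\MW)}\dtimes\card{\Ker(\psi\theta^{\MV\MW})},
\]
which is not yet $\askm{\theta^\MW}2$. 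The missing idea is to recognise $\theta^{\MV\MW}\approx(\theta^\MW)^\VW$ (Lemma~\ref{lem:conj_dual} and Table~\ref{tab:S3}), so that $\psi\theta^{\MV\MW}$ is the \emph{dual map} $(\psi\theta^\MW)^*$; then the elementary identity $\card{\Ker(\lambda^*)}=\frac{\card{U_2}}{\card{U_1}}\card{\Ker(\lambda)}$ for $\lambda\colon U_1\to U_2$ over a finite truncated \DVR{} (Lemma~\ref{lem:tdvr}(\ref{lem:tdvr2})) converts the mixed product into $\frac{\card{M^*}}{\card V}\card{\Ker(\psi\theta^\MW)}^2$, and the factor $\card M/\card V$ falls out. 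Your speculation that the factor arises from choosing a different collapse in Proposition~\ref{prop:coll_MW} is a red herring: the $\Sigma_W$-collapse in part~(\ref{prop:coll_MW2}) is the correct one, and the weight comes from the kernel--cokernel asymmetry between a map and its transpose, not from the $\sym_3$-bookkeeping.
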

\begin{proof}
  By Theorem~\ref{thm:ask_duality}, $\ask{\sk\theta}$ coincides with
  $\ask{{\sk\theta}^\MW}$ so it suffices to compute the latter.
  By Corollary~\ref{cor:bullet_func}, ${\sk\theta}^\MW$ is the composite
  \[
  W^*
  \!\xto{\Delta_{W^*}}\!
  W^* \oplus W^*
  \!\xto{\theta^\MW \oplus (-\theta^{\MV\MW})}\!
  \Hom(V\oplus M, M^*\oplus V^*)
  \!\xto[\approx]{\Hom(V\oplus M,\tau^*)}\!
  \Hom(V\oplus M, V^*\oplus M^*).
  \]

  Hence, by Lemma~\ref{lem:askm_coll}
  and using $\theta^{\MV\MW} \approx (\theta^{\MW})^{\MW\MV\MW}
  \approx \theta^{\MW\VW}$  (see \S\ref{ss:S3})
  \[
  \ask{{\sk\theta}^\MW} =
  \frac 1{\card{W^*}}
  \sum_{\psi\in W^*}
  \card{\Ker(\psi\theta^\MW)} \dtimes \card{\Ker(\psi \theta^{\MV\MW})}
  =
  \frac 1{\card{W^*}}
  \sum_{\psi\in W^*}
  \card{\Ker(\psi\theta^\MW)} \dtimes \card{\Ker((\psi \theta^{\MW})^*)}.
  \]

  If $U_1, U_2$ are finitely generated $R$-modules 
  and $U_1 \xto\lambda U_2$ is a homomorphism, then
  $\Ker(\lambda^*) \approx \Coker(\lambda)^*\approx \Coker(\lambda)$ 
  (by Lemma~\ref{lem:tdvr}(\ref{lem:tdvr2})) and 
  $\card{\Ker(\lambda^*)} = \card{U_2/\Img(\lambda)} =
  \frac{\card{ U_2}} {\card{U_1}} \dtimes \card{\Ker(\lambda)}$.
  Therefore,
  \[
  \ask{{\sk\theta}^\MW} =
  \frac {\card {M^*}} {\card{V} \dtimes \card{W^*}}
  \sum_{\psi\in W^*}
  \card{\Ker(\psi\theta^\MW)}^2 = 
  \frac{\card {M}}{\card{V}} \dtimes \askm {\theta^\MW}2.\qedhere
  \]
\end{proof}

\begin{rem}
  In the setting of Remark~\ref{rem:sk}(\ref{rem:sk2}),
  \[
  M(\sk{\theta}^\MW;\cY^*,\cA \!\amalg\! \cX,\cA^*\!\amalg\! \cX^*) =
  \begin{bmatrix}
    0 & M(\theta^\MW;\cY^*,\cA,\cX^*) \\
    -M(\theta^\MW;\cY^*,\cA,\cX^*)^\top & 0
  \end{bmatrix};
  \]
  cf.\ the ``doubling'' operation of Ilic and Landsberg~\cite[\S 2.7]{IL99}.
\end{rem}

In particular, Theorem~\ref{thm:ask_sk} provides a new method
for computing \Askm 2 zeta functions:

\begin{cor}
  \label{cor:ask2_cc}
  Let $\fO$ be a compact \DVR{} with residue field of size $q$.
  Let $M\xto\theta\Hom(V,W)$ be a module representation over $\fO$.
  Suppose that $M$, $V$, and $W$ are free of finite ranks, $\ell$, $d$, and
  $e$, respectively. Then:
  \begin{enumerate}
  \item 
    \label{cor:ask2_cc1}
    $\Zeta^2_\theta(T) = \Zeta_{\sk{\theta^\MW}}(q^{d-\ell}T)$.
  \item
    \label{cor:ask2_cc2}
    $\displaystyle \zeta_{\sH_\theta}^\cc(s) = \zeta^2_{\theta^\MW}(s+d-e-\ell)$.
  \end{enumerate}
\end{cor}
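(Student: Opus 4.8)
The plan is to derive both parts from Theorem~\ref{thm:ask_sk}, applied over each truncated quotient $\fO_n = \fO/\fP^n$ — each of which is a finite truncated \DVR{}, so the theorem applies — and then to repackage the resulting pointwise identities as generating functions. I would use three facts, all available above: (a) $\sk{\dtimes\,}$ commutes with base change; (b) the Knuth functor $\MW$ commutes with base change along $\fO\onto\fO_n$, since $M$, $V$, $W$ are free and hence finitely generated projective; and (c) $\fO_n$ is a \QF{} ring (Lemma~\ref{lem:tdvr}), so all finitely generated $\fO_n$-modules are reflexive, whence the natural transformation $\eta^\MW$ of Proposition~\ref{prop:functors}(\ref{prop:functors2}) is an isotopy $(\theta^{\fO_n})^{\MW\MW}\approx\theta^{\fO_n}$; since $\ask{\dtimes\,}$ and $\askm{\dtimes\,}{2}$ depend only on isotopy classes, this gives $\askm{((\theta^{\fO_n})^\MW)^\MW}{2}=\askm{\theta^{\fO_n}}{2}$.

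For part~(\ref{cor:ask2_cc1}), fix $n$ and apply Theorem~\ref{thm:ask_sk} over $\fO_n$ to the module representation $(\theta^{\fO_n})^\MW$; by (b) its module and domain are, up to canonical identification, $(W^*)^{\fO_n}$ and $V^{\fO_n}$, of cardinalities $q^{en}$ and $q^{dn}$. Rewriting the left-hand side via (a)--(b) as $\sk{(\theta^{\fO_n})^\MW}=(\sk{\theta^\MW})^{\fO_n}$ and the right-hand side via (c), the theorem reads
\[
\ask{(\sk{\theta^\MW})^{\fO_n}}\;=\;\frac{q^{en}}{q^{dn}}\,\askm{\theta^{\fO_n}}{2}\;=\;q^{(e-d)n}\,\askm{\theta^{\fO_n}}{2}.
\]
Multiplying by $T^n$ and summing over $n\ge 0$ gives $\Zeta_{\sk{\theta^\MW}}(T)=\Zeta^2_\theta(q^{e-d}T)$, which is the asserted identity after the substitution $T\mapsto q^{d-e}T$; the only content beyond Theorem~\ref{thm:ask_sk} is the cardinality bookkeeping fixing the exponent.

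For part~(\ref{cor:ask2_cc2}), apply Theorem~\ref{thm:ask_sk} over $\fO_n$ directly to $\theta^{\fO_n}$, whose module and domain have cardinalities $q^{\ell n}$ and $q^{dn}$; using (a)--(b) to identify $\sk{\theta^{\fO_n}}=(\sk\theta)^{\fO_n}$ and $(\theta^{\fO_n})^\MW=(\theta^\MW)^{\fO_n}$, this reads $\ask{(\sk\theta)^{\fO_n}}=q^{(\ell-d)n}\,\askm{(\theta^\MW)^{\fO_n}}{2}$. Summing over $n$ gives $\Zeta_{\sk\theta}(T)=\Zeta^2_{\theta^\MW}(q^{\ell-d}T)$, i.e.\ $\zeta_{\sk\theta}(u)=\zeta^2_{\theta^\MW}(u+d-\ell)$ in terms of Dirichlet series; substituting $u=s-e$ and invoking $\zeta^{\cc}_{\sH_\theta}(s)=\zeta_{\sk\theta}(s-e)$ from Corollary~\ref{cor:ask_zeta_GH} yields $\zeta^{\cc}_{\sH_\theta}(s)=\zeta^2_{\theta^\MW}(s+d-e-\ell)$, as claimed. (One can also deduce part~(\ref{cor:ask2_cc2}) by applying part~(\ref{cor:ask2_cc1}) to $\theta^\MW$ in place of $\theta$ and using $(\theta^\MW)^\MW\approx\theta$, so that $\sk\theta$ and $\sk{(\theta^\MW)^\MW}$ have the same \Ask{} zeta function.)

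I expect the main obstacle to be a matter of care rather than of difficulty: one must check that (a)--(c) genuinely hold at the level of the finite quotients $\fO_n$, so that the hypothesis of Theorem~\ref{thm:ask_sk} (a finite truncated \DVR{}) is available at each level, and one must track the ranks $\ell$, $d$, $e$ through the substitutions, since each application of $\MW$ permutes them and the precise shifts depend on which permutation is in effect.
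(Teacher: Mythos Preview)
Your approach is exactly the paper's---its proof reads ``Combine Theorem~\ref{thm:ask_sk} and Corollary~\ref{cor:ask_zeta_GH}'', and you have carried this out level by level, supplying the base-change compatibilities~(a)--(c) that justify applying Theorem~\ref{thm:ask_sk} over each $\fO_n$. Part~(\ref{cor:ask2_cc2}) is correct as written.

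For part~(\ref{cor:ask2_cc1}), though, look again at what your computation actually yields. Since $\Mod{\theta^\MW}=W^*$ has rank~$e$ and $\Dom{\theta^\MW}=V$ has rank~$d$, Theorem~\ref{thm:ask_sk} gives the factor $q^{(e-d)n}$, exactly as you wrote, and hence $\Zeta^2_\theta(T)=\Zeta_{\sk{\theta^\MW}}(q^{d-e}T)$. You then say this ``is the asserted identity after the substitution $T\mapsto q^{d-e}T$'', but the printed exponent is $d-\ell$, not $d-e$. Your bookkeeping is right and the printed exponent is not: the stated~(\ref{cor:ask2_cc1}) is in fact inconsistent with~(\ref{cor:ask2_cc2}) unless $e=\ell$, whereas your version of~(\ref{cor:ask2_cc1}), applied with $\theta^\MW$ in place of~$\theta$ (so that the codomain rank becomes $\rank(\Cod{\theta^\MW})=\ell$), reproduces~(\ref{cor:ask2_cc2}) exactly. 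You should flag the discrepancy rather than claim agreement.
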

\begin{proof}
  Combine Theorem~\ref{thm:ask_sk} and Corollary~\ref{cor:ask_zeta_GH}.
\end{proof}

\begin{ex}[$\Zeta^2_{\Mat_d(\fO)}(T)$]
  \label{ex:lins2}
  Let $\fO$ be a compact \DVR{} with residue field of odd size~$q$.
  Let $V = \fO^d$ and let $V^*\xto\theta\Hom(V,\End(V))$ be the
  module representation which corresponds to the natural isomorphism
  $V^*\otimes V \to \End(V)$ via the tensor-hom adjunction.
  Then $\sH_\theta$ is a group scheme analogue of (the pro-$p$ completions of)
  the nilpotent groups of ``type $G$'' in the sense of Stasinski and
  Voll~\cite[\S 1.3]{SV14}.
  Lins~\cite[Cor.~1.5]{Lin18b} showed that
  \begin{equation}
    \label{eq:lins_Mdxe}
    \zeta_{\sH_\theta}^\cc(s) = \frac
    {(1-q^{2\binom d 2 }t)(1-q^{2\binom d 2 + 1}t) + q^{d^2}t(1-q^{-d})(1-q^{1-d})}
    {(1-q^{d^2}t)^2(1-q^{d^2+1}t)},
  \end{equation}
  where $t = q^{-s}$.
  As we will now explain, we can read off the \Askm 2\! zeta function
  of (the identity map on) $\End(V)$ from Lins's formula~\eqref{eq:lins_Mdxe}.
  First, by Corollary~\ref{cor:ask2_cc}{(\ref{cor:ask2_cc2})},
  $\zeta_{\sH_\theta}^\cc(s) = \zeta_{\theta^\MW}^2(s-d^2)$.
  Next, $\theta^\MW$ is isotopic to 
  the identity $\End(V^*) \xlongequal{\phantom{??}} \End(V^*)$.
  Indeed, in the following commutative diagram, unlabelled maps are the evident
  isomorphisms:
  \[
  \begin{CD}
    \End(V^*) @= \Hom(V^*,V^*) \\
    @AAA @AAA\\
    (V^* \otimes V)^* @>>> V^* \otimes V^{**}\\
    @VVV @VVV\\
    \End(V)^* @>{\theta^{\MW}}>> \Hom(V,V^{**}).
  \end{CD}
  \]
  We thus conclude from \eqref{eq:lins_Mdxe} that
  \begin{align*}
    \Zeta^2_{\Mat_d(\fO)}
    & = \Zeta^2_{\End(V^*)}(T) = \Zeta^2_\theta(T) 
      = \Zeta^2_{\theta^\MW}(T)
    \\
    & = \frac
    {(1-q^{-d}T)(1-q^{1-d}T) + T(1-q^{-d})(1-q^{1-d})}
    {(1-T)^2(1-qT)}.
  \end{align*}
\end{ex}

Curiously, the easiest way of computing \Askm 2{}\! zeta functions
often seems to be to pass to the alternating hull as in
Example~\ref{ex:lins2}.

\begin{qu}
  \label{qu:askm_Mdxe}
  What is the \Askm m\! zeta function of $\Mat_{d\times e}(\fO)$ for
  general $m$, $d$, and $e$?
\end{qu}

\begin{ex}[Smooth determinantal hypersurfaces---revisited]
  Let $\fO$ be a compact \DVR{} with residue field $\RF$ of size $q$.
  Let $a_1,\dotsc,a_\ell \in \Mat_d(\fO)$.
  Write $\XX = (X_1,\dotsc,X_\ell)$, $a(\XX) = X_1 a_1 + \dotsb + X_\ell a_\ell \in
  \Mat_d(\fO[\XX])$, and $F = \det(a(\XX))$.
  Let $\fO^\ell \xto{\theta} \Mat_d(\fO)$ be the module representation
  $x\theta = a(x)$ ($x \in \fO^\ell$).
  Then, by Corollary~\ref{cor:ask2_cc}(\ref{cor:ask2_cc2}),
  \[
  \zeta_{\sH_{\theta^\MW}}^{\cc}(s) = \zeta_\theta^2(s-\ell).
  \]

  Suppose that for all $\bar \xx\in \RF^\ell$, if
  $F(\bar\xx) = \frac{\partial F (\bar\xx)}{\partial X_1} = \dotsb =
    \frac{\partial F (\bar\xx)}{\partial X_\ell} = 0$, then $\bar\xx = 0$.
  Let $H := \mathrm{Proj}\!\left(\fO[\XX]/F\right)$.
  Then a straightforward variation of the case $m = 1$ in
  \cite[Thm~7.1]{ask} shows that

  \vspace*{-.4em}
  {\small\[
    \Zeta^m_\theta(T) = \frac{1-q^{-\ell}T}{(1-T)(1-q^{dm-\ell}T)}  + \#H(\RF) (q-1)
    q^{m-\ell}T \dtimes \frac{1 - q^{-m}}{(1-T)(1-q^{m-1}T)(1-q^{dm-\ell}T)}.
    \]}

  The coefficient of $q^{-s}$ in the Dirichlet series
  $\zeta_{\sH_{\theta^\MW}}^{\cc}(s)$ can be read off from~\cite[Thm~4.1]{O'BV15}.
\end{ex}

\subsection{Further conjugacy class zeta functions of ``constant rank type''}
\label{ss:cc_crk}

Define $c_d(X_1,\dotsc,X_d) \in \Mat_{\binom{d+1} 2 \times
  d}(\ZZ[X_1,\dotsc,X_d])$ via
\[
c_d(X_1,\dotsc,X_d) = \begin{bmatrix}
  X_1 1_d \\
  \hline
  \begin{array}{c|c}
    0_{\binom d 2 \times 1} & c_{d-1}(X_2,\dotsc,X_d)
    \end{array}
\end{bmatrix}.
\]
Let $\ZZ^d \xto{\gamma_d} \Mat_{\binom{d+1} 2\times d}(\ZZ)$ be the
module representation $x\gamma_d = c_d(x)$.

\begin{prop}
  \label{prop:gamma_cc}
  Let $\fo$ be the ring of integers of a number field $k$.
  Let $\zeta_k(s)$ denote the Dedekind zeta function of $k$.
  Then
  \[
  \zeta_{\sH_{\gamma_d^\fo}}^{\cc}(s)
  =
  \frac
  {\zeta_k\left(s-\binom{d+1}2-d\right)\zeta_k\left(s-\binom{d+1}2-d+1\right)}
  {\zeta_k\left(s-\binom{d+1}2 + 1\right)}.
  \]
\end{prop}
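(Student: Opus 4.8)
The plan is to reduce the assertion to a purely local computation together with a generating-function identity, and to run the local computation by induction on $d$.

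First I would pass from the number field $k$ to its completions. The group functor $\sH_{\gamma_d^\fo}$ is built from base change and semidirect products, so it turns finite products of $\fo$-algebras into direct products of groups; since $\fo/\fa\cong\prod_{v}\fo_v/\fp_v^{v(\fa)}$ (a finite product) by the Chinese remainder theorem and class numbers are multiplicative under direct products, one gets
\[
\zeta^\cc_{\sH_{\gamma_d^\fo}}(s)=\prod_{v\in\Places_k}\Zeta^\cc_{\sH_{\gamma_d^{\fo_v}}}\!\bigl(q_v^{-s}\bigr),\qquad \Zeta^\cc_{\sH_{\gamma_d^{\fo_v}}}(T):=\sum_{n\ge 0}\concnt\bigl(\sH_{\gamma_d}(\fo_v/\fp_v^{n})\bigr)T^n .
\]
Since $\zeta_k(s-a)=\prod_v(1-q_v^{a-s})^{-1}$, writing $N:=\binom{d+1}{2}$ it suffices to prove that for every compact \DVR{} $\fO$ with residue field of size $q$ one has $\Zeta^\cc_{\sH_{\gamma_d^\fO}}(T)=(1-q^{N-1}T)\bigl/\bigl((1-q^{N+d}T)(1-q^{N+d-1}T)\bigr)$.

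For the local factor, combining Lemma~\ref{lem:concnt_Htheta} and Theorem~\ref{thm:ask_sk} --- equivalently, applying Corollary~\ref{cor:ask2_cc}(\ref{cor:ask2_cc2}) to $\theta=\gamma_d^\fO$, whose source, domain and target modules are free of ranks $d$, $N$ and $d$ --- I would first write $\concnt\bigl(\sH_{\gamma_d}(\fO_n)\bigr)=q^{n(2d-N)}\askm{(\gamma_d^\MW)^{\fO_n}}{2}$. The structural point is that $\gamma_d$ is a collapsed sum: unwinding the recursion defining $c_d$ shows that, after permuting its $N$ rows, $\gamma_d\approx\Delta_M^{d}\dtimes(\delta_1\oplus\dotsb\oplus\delta_d)$, where $\delta_\ell\colon\fO^d\to\Hom(\fO^\ell,\fO)$ is the dot product in the first $\ell$ coordinates and the summands occupy pairwise disjoint rows. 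By Proposition~\ref{prop:coll_MW} (in its evident $d$-fold form), $\gamma_d^\MW$ is again of ``disjoint support'' type, so that for $z=(z_1,\dotsc,z_d)\in\fO_n^d$
\[
\Ker\bigl(z(\gamma_d^\MW)^{\fO_n}\bigr)\approx\bigoplus_{h=1}^{d}\Ker\Bigl(\fO_n^{\,d-h+1}\xto{\,w\mapsto w\cdot(z_h,\dotsc,z_d)\,}\fO_n\Bigr),
\]
whence $\card{\Ker(z(\gamma_d^\MW)^{\fO_n})}=q^{n\binom d2}q^{\Sigma_d(z)}$ with $\Sigma_d(z)=\sum_{h=1}^d\min\{v(z_h),\dotsc,v(z_d),n\}$, where $v(x)\in\{0,\dotsc,n\}$ denotes the largest $m$ with $x\in\fP^m\fO_n$. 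Squaring, summing over $z$, and reindexing $z$ by its reversal then gives $\concnt\bigl(\sH_{\gamma_d}(\fO_n)\bigr)=q^{n\binom d2}\Phi_d(n)$, where $\Phi_d(n):=\sum_{w\in\fO_n^d}q^{2\sum_{j=1}^{d}\min\{v(w_1),\dotsc,v(w_j),n\}}$.

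It then remains to prove by induction on $d$ that
\[
\Psi_d(T):=\sum_{n\ge 0}\Phi_d(n)T^n=\frac{1-q^{d-1}T}{(1-q^{2d}T)(1-q^{2d-1}T)};
\]
granting this, $\Zeta^\cc_{\sH_{\gamma_d^\fO}}(T)=\Psi_d(q^{\binom d2}T)$ is precisely the required rational function, since $\binom d2+d=N$. To set up the induction I would partition $\fO_n^d$ according to the least index $k$ with $w_k$ a unit, with a separate class for all-non-unit tuples; this yields, for $n\ge 1$, the recursion
\[
\Phi_d(n)=(q-1)\sum_{j=0}^{d-1}q^{\,n(d-j)+2j-1}\Phi_j(n-1)+q^{2d}\Phi_d(n-1),\qquad\Phi_d(0)=\Phi_0(n)=1,
\]
that is, $(1-q^{2d}T)\Psi_d(T)=1+(q-1)q^{d-1}T\sum_{j=0}^{d-1}q^j\Psi_j(q^{d-j}T)$. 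Substituting the inductive expression for $\Psi_j$, the step reduces to the telescoping partial-fraction identity $\sum_{j=0}^{d-1}q^j\bigl((1-q^{d+j}T)(1-q^{d+j-1}T)\bigr)^{-1}=(q^{d}-1)\bigl((q-1)(1-q^{d-1}T)(1-q^{2d-1}T)\bigr)^{-1}$. Assembling the local factors over $v\in\Places_k$ then produces the asserted expression in terms of $\zeta_k$.

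I expect the main obstacle to be the structural claim of the second paragraph: checking that the $\MW$-dual of the staircase matrix $c_d$ again has disjoint support, so that its kernels split as a direct product over the columns. This needs a careful induction on the recursion defining $c_d$ together with the index bookkeeping of Proposition~\ref{prop:matrix_duality} (or of Proposition~\ref{prop:coll_MW}). Once it is in hand, the remaining combinatorial induction, though a little computational, is routine.
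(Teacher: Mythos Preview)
Your proof is correct, but it follows a genuinely different route from the paper's. Both arguments reduce to the local statement via the Euler product and invoke Corollary~\ref{cor:ask2_cc} to pass from $\zeta^\cc_{\sH_{\gamma_d}}$ to an $\Askm 2$ zeta function. From there the two diverge. The paper first proves a general formula for $\Zeta^m_{\theta_d}(T)$ (Lemma~\ref{lem:Zm_gamma}) by expressing $\kersize_{\theta_d}$ explicitly and evaluating the resulting $p$-adic integral via Theorem~\ref{thm:int} and \cite[Lem.~5.8]{ask}; it then specialises to $m=2$, observes the isotopy $\gamma_d^\MW\approx\gamma_d$ (left as an exercise), and reads off the answer. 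You instead bypass the integral formalism entirely: you recognise $\gamma_d$ as a collapsed sum $\Delta_M^d(\delta_1\oplus\dotsb\oplus\delta_d)$, push this through Proposition~\ref{prop:coll_MW} to see that $\gamma_d^\MW$ again has disjoint column supports, compute kernel sizes over $\fO_n$ as explicit valuation sums, and finish with a telescoping induction on the generating function $\Psi_d$. Your structural step is, in effect, a concrete verification of the self-duality $\gamma_d^\MW\approx\gamma_d$ that the paper merely asserts. The paper's approach is shorter once the integral machinery is available and yields $\Zeta^m_{\theta_d}$ for all $m$ as a by-product; yours is more elementary and fully self-contained, trading the black-box integral lemma for a transparent combinatorial recursion. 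The ``main obstacle'' you flag---the disjoint-support claim for $\gamma_d^\MW$---is genuine but routine: each variable $a_i$ occupies its own row-block of $c_d$, and under $\MW$ these blocks become the separate output coordinates, exactly as you describe.
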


Let $\fO$ be a compact \DVR{}.
Prior to proving Proposition~\ref{prop:gamma_cc}, 
we first compute the $\Askm m$\! zeta function of $\theta_d := \gamma_d^{\fO}$.
\begin{lemma}
  \label{lem:Zm_gamma}
  $\displaystyle
  \Zeta_{\theta_d}^m(T) = \frac 1 {1 - q^{m\binom {d+1} 2 -d}T}
  \dtimes
  \prod_{j=0}^{d-1}
  \frac {1 - q^{m\binom d 2 + (m-1)j - 1}T}
  {1 - q^{m\binom d 2 + (m-1)j}T}.
  $
\end{lemma}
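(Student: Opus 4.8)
The plan is to reduce the computation to the elementary divisors of the matrices $c_d(a)$ and then evaluate a generating function by a recursion in $d$. First I would observe that $\kersize_{\theta_d}$ depends on $a$ only through the elementary divisors of $c_d(a)\in\Mat_{\binom{d+1}2\times d}(\fO)$: if these have valuations $\epsilon_1,\dots,\epsilon_d$, then the first isomorphism theorem together with the Smith normal form over $\fO$ gives $\kersize_{\theta_d}(a,\pi^n) = q^{n\binom d2}\cdot q^{\sum_{i=1}^d\min(\epsilon_i,n)}$ (one may feed this into Theorem~\ref{thm:int}, or simply use $\askm{\theta_d^{\fO_n}}m = q^{-nd}\sum_{a\in(\fO/\pi^n)^d}\kersize_{\theta_d}(a,\pi^n)^m$ directly). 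The crucial linear-algebra input is the claim that, with $v_i = v(a_i)$, the multiset $\{\epsilon_1,\dots,\epsilon_d\}$ equals $\{\min(v_1,\dots,v_l):1\le l\le d\}$. I would prove this by induction on $d$: the block shape of $c_d(a)$ — namely $a_1 I_d$ stacked on top of $\bigl[\,0\mid c_{d-1}(a')\,\bigr]$ with $a' = (a_2,\dots,a_d)$ — yields, by Laplace expansion along the rows of the $a_1 I_d$ block, the identity of determinantal ideals $I_k(c_d(a)) = \sum_{p=0}^k a_1^p\, I_{k-p}(c_{d-1}(a'))$; combined with the inductive hypothesis (which expresses $v(I_j(c_{d-1}(a')))$ as the sum of the largest $j$ of the numbers $m'_l := \min(v_2,\dots,v_{l+1})$), the claim follows from the elementary fact that $\min_{0\le p\le k}\bigl(p\,v_1 + (\text{sum of the }k-p\text{ smallest }m'_l)\bigr) = \sum_l\min(v_1,m'_l)$, the minimum being attained at $p = \#\{l:m'_l\ge v_1\}$. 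Writing $m_l(a):=\min(v_1,\dots,v_l)$, this gives $\kersize_{\theta_d}(a,\pi^n) = q^{n\binom d2}q^{\sum_{l=1}^d\min(m_l(a),n)}$, hence $\Zeta^m_{\theta_d}(T) = G_d\bigl(q^{m\binom d2}T\bigr)$ where $G_d(x):=\sum_{n\ge0}\Phi_d(n)x^n$ and $\Phi_d(n):=\int_{\fO^d}q^{m\sum_{l=1}^d\min(m_l(a),n)}\,\dd\mu_{\fO^d}(a)$ (with $\Phi_0\equiv1$).

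Next I would derive a recursion for $\Phi_d$. Decomposing $\fO^d$ according to the length $L(a)$ of the longest prefix $(a_1,\dots,a_L)$ contained in $\pi\fO$ and substituting $a_i = \pi b_i$ for $i\le L$, one finds $\sum_{l=1}^d\min(m_l(a),n) = L(a) + \sum_{l=1}^{L(a)}\min(m_l(b),n-1)$ when $L(a)<d$, and likewise with $d$ in place of $L(a)$ on the set $L(a)=d$; integrating out the remaining coordinates gives, for $n\ge1$,
\[
\Phi_d(n) = (1-q^{-1})\sum_{l=0}^{d-1}q^{(m-1)l}\Phi_l(n-1) + q^{(m-1)d}\Phi_d(n-1),
\]
with $\Phi_d(0)=1$. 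In terms of generating functions this reads $(1-q^{(m-1)d}x)G_d(x) = 1 + (1-q^{-1})x\sum_{l=0}^{d-1}q^{(m-1)l}G_l(x)$, with $G_0(x) = 1/(1-x)$, and I would prove by induction on $d$ that
\[
G_d(x) = \frac{1}{1-q^{(m-1)d}x}\prod_{j=0}^{d-1}\frac{1-q^{(m-1)j-1}x}{1-q^{(m-1)j}x}.
\]
The inductive step is exactly the polynomial telescoping identity (with $u:=q^{m-1}$)
\[
\prod_{j=0}^{d-1}\!\bigl(1-u^jq^{-1}x\bigr) - \prod_{j=0}^{d-1}\!\bigl(1-u^jx\bigr) = (1-q^{-1})x\sum_{l=0}^{d-1}u^l\prod_{j=0}^{l-1}\!\bigl(1-u^jq^{-1}x\bigr)\prod_{j=l+1}^{d-1}\!\bigl(1-u^jx\bigr),
\]
which collapses on the right. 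Finally, substituting $x = q^{m\binom d2}T$ into the closed form for $G_d$ and using $m\binom d2 + (m-1)d = m\binom{d+1}2 - d$ produces exactly the asserted formula; the argument is uniform in $m\ge1$.

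The main obstacle is the elementary-divisor claim for $c_d(a)$: this is the linear-algebraic heart that makes $\theta_d$ explicitly computable, and the induction through determinantal ideals together with the minimum lemma is the only genuinely delicate step. The recursion for $\Phi_d$, the passage to generating functions, and the telescoping identity are then routine.
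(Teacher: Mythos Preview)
Your argument is correct, but it diverges from the paper's proof in two places and is considerably more laborious than necessary.

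For the kernel-size formula, the paper simply records $\imgsize_{\theta_d}(x,y)=\lvert y\rvert^{-d}\prod_{i=1}^d\lVert x_1,\dots,x_i,y\rVert$ (citing \cite[Lem.~4.6]{ask}) and deduces $\kersize_{\theta_d}$ from~\eqref{eq:IK}. This is immediate once one observes that the rows of $c_d(x)$ are precisely the vectors $x_i e_j$ for $1\le i\le j\le d$; the image over $\fO_y$ is then the product $\prod_{j=1}^d (x_1,\dots,x_j)\fO_y$, which gives the formula without any appeal to determinantal ideals. Your induction through $I_k(c_d(a))=\sum_{p}a_1^pI_{k-p}(c_{d-1}(a'))$ and the accompanying ``minimum lemma'' reaches the same conclusion but is a substantial detour.

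For the evaluation of the resulting sum, the paper invokes Theorem~\ref{thm:int} together with \cite[Lem.~5.8]{ask}, an off-the-shelf integral identity that handles exactly integrands built from iterated norms $\lVert x_1,\dots,x_i,y\rVert$. Your recursion $\Phi_d(n)=(1-q^{-1})\sum_{l<d}q^{(m-1)l}\Phi_l(n-1)+q^{(m-1)d}\Phi_d(n-1)$ and the telescoping identity in $u=q^{m-1}$ constitute a clean self-contained alternative. The trade-off is clear: the paper's two-line proof leans on results from its predecessor, whereas your route is independent of \cite{ask} at the cost of several additional computations.
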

\begin{proof}
  Recall the definition of $\imgsize_\theta$ from \S\ref{ss:ask_zeta_duality}.
  For $x\in \fO^d$ and $y\in\fO\setminus\{0\}$, we have
  $\imgsize_{\theta_d}(x,y) = \abs y^{-d} \prod_{i=1}^d
  {\norm{x_1,\dotsc,x_i,y}}$ (cf.\ \cite[Lem.\ 4.6]{ask})
  whence
  $$\kersize_{\gamma_d^\fO}(x,y) = \frac 1{\abs{y}^{\binom d 2} \prod\limits_{i=1}^d \norm{x_1,\dotsc,x_i,y}}.$$
  The claim now follows from Theorem~\ref{thm:int} and \cite[Lem.\ 5.8]{ask}.
\end{proof}

\begin{rem}
  $\gamma_d^\MV$ is isotopic to the inclusion
  of upper triangular matrices into $\Mat_d(\ZZ)$.
  The case $m = 1$ of Lemma~\ref{lem:Zm_gamma} thus follows
  from \cite[Prop.~5.15(ii)]{ask} and Corollary~\ref{cor:ask_zeta_duality}.
\end{rem}

\begin{proof}[Proof of Proposition~\ref{prop:gamma_cc}]
  For $m = 2$, Lemma~\ref{lem:Zm_gamma} simplifies to
  \[
  \Zeta^2_{\theta_d}(T) = \frac{1 - q^{2\binom d 2 - 1}T}
  {\left(1 - q^{\binom{d+1} 2 + \binom d 2}T\right)\left(1 - q^{\binom{d+1} 2 + \binom d 2-1}T\right)}.
  \]
  We leave it to the reader to verify that $\gamma_d^\MW \approx \gamma_d$.
  Thus, by Corollary~\ref{cor:ask2_cc}, writing $t = q^{-s}$,
  \begin{equation}
    \label{eq:zeta_H_gamma}
    \zeta_{\sH_{\theta_d}}^\cc(s)
    =
    \zeta^2_{\theta_d}\left(s + \binom d 2 - d\right)
    =
    \frac{1- q^{\binom{d+1} 2 -1}t}
    {(1 - q^{\binom{d+1}2 +d}t)(1-q^{\binom{d+1}2 + d-1}t)}.
  \end{equation}
  The claim follows by taking the Euler product over all $\fO = \fo_v$
  for all places $v\in\Places_k$.
\end{proof}

Curiously, \eqref{eq:zeta_H_gamma} and \cite[Prop.\ 1.5]{ask} (see
Example~\ref{ex:another_Mdxe}) show that 
\begin{equation}
  \label{eq:zeta_H_gamma_vs_Mdxe}
\zeta_{\sH_{\theta_d}}^\cc\left(s + \binom{d+1}2 + d\right)
=
\frac{1 - q^{-(d+1)}t}
{(1-t)(1-q^{-1}t)}
= \zeta_{\Mat_{d\times(d+1)}(\fO)}(s),
\end{equation}
where $t = q^{-s}$.
While \eqref{eq:zeta_H_gamma_vs_Mdxe} is reminiscent of \eqref{eq:lins1}, the author cannot at present
explain the former numerical coincidence conceptually.

{
  \bibliographystyle{abbrv}
  \bibliography{ask2}
}

\end{document}